\documentclass[a4paper,10pt]{article}
\usepackage[top=2.54cm,bottom=2.0cm,left=2.0cm,right=2.54cm, includeheadfoot]{geometry}

\usepackage[T1]{fontenc}
\usepackage[utf8]{inputenc}
\usepackage[english]{babel}
\usepackage{enumerate}
\usepackage{multirow,booktabs}
\usepackage[table]{xcolor}
\usepackage{fullpage}
\usepackage{lastpage}
\usepackage{indentfirst}

\usepackage{footnote}

\usepackage{amsmath,amsfonts,amssymb,amscd,amsthm}

\usepackage{bm}

\usepackage[all,2cell]{xy} \UseAllTwocells \SilentMatrices

\usepackage{mathpartir}

\usepackage{hyperref}

\usepackage{subfiles}

\usepackage{multicol}

\usepackage{lineno}

\newtheorem{theorem}{Theorem}[section]
\newtheorem{proposition}[theorem]{Proposition} 

\newtheorem{definition}[theorem]{Definition} 
\newtheorem{example}[theorem]{Example}

\newtheorem*{claim*}{Claim}

\newtheorem{remark}[theorem]{Remark}

\begin{document}

\title{Generalized Marshall Quotients and Real Semigroups of Continuous and Differentiable Functions}

\author{
   Kaique Matias de Andrade Roberto $^\textup{\scriptsize a}$
}

\date{
   $^\textup{\scriptsize a}$\textit{\small Centre for Logic, Epistemology and the History of Science (CLE), State University of Campinas (UNICAMP), Campinas, Brazil}
}

\maketitle

\begin{abstract}
The theory of real semigroups developed by M. Dickmann and A. Petrovich provides an algebraic framework for abstract real spectra and real algebraic geometry, yet its application to rings of continuous functions is historically hindered by the topological constraints. In this paper, we bridge this gap by introducing generalized Marshall quotients over rings of real-valued continuous and differentiable functions, yielding new explicitly calculated examples of real semigroups. Furthermore, we conclude that the group of invertible elements of these quotients constitutes a real reduced hyperfield (which is categorically equivalent to reduced special groups), addressing the open problem of characterizing when the units of a real semigroup form a reduced special group. Finally, we apply this hyperalgebraic machinery to translate topological and differential phenomena into hyperalgebraic identities, establishing generalized versions of the \L{}ojasiewicz-type inequalities.

\textbf{Keywords:} quadratic forms, special groups, real semigroup, multirings, hyperfields.

\textbf{MSC 2020 Classification:} 11E81, 11Exx.
\end{abstract}

\section{Introduction}

% \linenumbers                % mostra as linhas no pdf

The theory of real semigroups, introduced by M. Dickmann and A. Petrovich in
\cite{dickmann2004real}, provides an algebraic framework for abstract real
spectra and real algebraic geometry, lifting the duality between reduced special groups and abstract ordering spaces developed in \cite{dickmann2000special}. A real semigroup is a ternary semigroup
equipped with ternary relations $D$ and $D^t$ that respectively provide first order axiomatic notions of
representation and transversal of elements by binary quadratic forms. The category of real
semigroups is isomorphic to that of real reduced multirings, a
hyperalgebraic counterpart developed by M. Marshall
\cite{marshall2006real}. This dual perspective has proved extremely fruitful:
concrete real semigroups arise naturally from rings of continuous functions,
fields, and abstract spaces of orderings, and the hyperfield structure allows one
to import algebraic tools such as Marshall's quotient and the
Positivstellensatz (as detailed in \cite{marshall2006real}, \cite{roberto2021quadratic} and \cite{ribeiro2016functorial}).

A central construction in the theory is the \emph{Marshall quotient} of a
multiring $A$ by a multiplicative subset $S$, denoted $A/_m S$. When $A$ is
the ring $\mathcal C(X,\mathbb R)$ of real-valued continuous functions on a
$T_6$ space $X$ and $S = q(A) = A^2 \cap \mbox{nzd}(A)$ is the set of
non-negative non-zero-divisors that are squares, it is known that
$A/_m q(A)$ is a real reduced multiring \cite{roberto2021quadratic}.
Thus $A/_m q(A)$ becomes a real semigroup, and using the recent results developed in \cite{miraglia2021boolean} we conclude that its invertible elements form a
reduced special group, a key link with the theory of quadratic forms.

However, the classical Marshall quotient discards only the ``local'' information encoded in the zero-divisors of $A$, and it does so globally on the whole space $X$. In many situations one is interested in the behavior of functions \emph{outside} a prescribed family of exceptional sets: for instance, asymptotic behavior at infinity (outside compact sets), generic behavior on a dense open set (outside nowhere dense closed sets), or metric asymptotic behavior (outside bounded sets). To capture such phenomena, we introduce in this paper \emph{generalized Marshall quotients}. Instead of requiring two functions to be equal globally up to a square factor, we only demand equality outside a set belonging to a fixed ideal of subsets. For practical reasons we restrict our analysis to compact sets, a given family of closed sets, or bounded sets (but the abstract framework is available for more general families of subsets). The resulting quotients, denoted $A/_{cm}q(A)$, $A/_{clm}q(A)$ and $A/_{bm}q(A)$, are shown to be real reduced multirings when $X$ satisfies mild topological hypotheses (e.g., $X$ is a non-compact $T_6$ space for the compact case, or a $T_6$ space with a suitable ideal of closed sets, or an unbounded metric space for the bounded case). Moreover, they are Boolean real semigroups, and their groups of units are reduced special groups, yielding new examples of real hyperfields that encode asymptotic, generic or metric-asymptotic properties of continuous
functions.

The second part of the article extends this construction to rings of differentiable functions $\mathcal C^k(\mathbb R^n,\mathbb R)$. The direct translation fails: a non-negative $C^k$ function need not be a square in $\mathcal C^k$, so the global Marshall quotient is not real reduced. We isolate the exact obstruction (the possible unboundedness of the zero-set) and show that by restricting to the subring $D \subseteq \mathcal C^k$ of functions that are constant outside a compact set (respectively, outside a closed set), the generalized quotients become real reduced multirings again. The compact and closed cases are treated in detail; the bounded case collapses to the compact one in Euclidean space, and the general situation is discussed in a section of comments.

With the algebraic machinery in place, we turn to applications. The
hyperalgebraic identities that hold in real reduced multirings translate, via the abstract version of \L{}ojasiewicz inequality provided by M. Marshall in Theorem 6.8.1 of \cite{marshall1996spaces}, into concrete topological and differential statements. We obtain asymptotic, generic, and metric-asymptotic versions of the \L{}ojasiewicz inequality, alongside positive extension results for fractured domains that are beyond the reach of classical Tietze-type theorems. These applications illustrate how the Boolean real semigroup structure, when applied in the context of continuous and differentiable functions, has potential to turn hard analytical problems into algebraic identities.

The paper is organized as follows. Section 2 collects the necessary preliminaries on rings, multirings, hyperfields, and real semigroups. Section 3 develops the generalized Marshall quotients for continuous functions and proves they are real reduced Boolean semigroups, concluding with the translation of Marshall's abstract \L{}ojasiewicz inequalities to concrete topological contexts. Section 4 extends this construction to differentiable functions, isolating the necessary boundedness restrictions to preserve real reducedness, and establishes smooth-at-infinity interpolations and extensions. Finally, Section 5 concludes with some final remarks and further investigations.

\section{Preliminaries}

In this Section, after establishing some notation and proving some folklore results concerning real-valued continuous functions in Subsection \ref{subsec-2.1}, we provide a really brief recap about multirings, hyperfields and real semigroups. These theories are extensively studied in the past decades, and here we will only recall the main definitions and results that will be relevant for this work. For more details, proofs and examples we kindly suggest the reader consult \cite{dickmann2000special}, \cite{dickmann2012real}, \cite{dickmann2012spectral}, \cite{dickmann2015faithfully}, \cite{dickmann2017constructions}, \cite{ribeiro2016functorial} and \cite{roberto2021quadratic}.

\subsection{Notations about Rings, continuous functions and perfectly normal spaces}\label{subsec-2.1}

In this work, every ring will be considered as a commutative ring with unity. Given a ring $A$, we denote
\begin{align*}
    A^\times&:=\{a\in A:a\mbox{ is invertible}\}=\{a\in A:\mbox{ there exists }b\in A\mbox{ with }ab=1\};\\
    A^2&:=\{a^2:a\in A\};\\
    \mbox{zd}(A)&:=\{a\in A:a\mbox{ is a zero-divisor}\}=\{a\in A:\mbox{ there exists }a\in A\mbox{ with }ab=0\};\\
    \mbox{nzd}(A)&:=A\setminus\mbox{zd}(A)=\{a\in A:a\mbox{ is not a zero-divisor}\};\\
    q(A)&:=A^2\cap\mbox{nzd}(A).
\end{align*}

In the sequel, we describe $\mbox{zd}(A)$ and $\mbox{nzd}(A)$ for $A=\mathcal C(X,\mathbb R)$, for suitable topological spaces $X$ (namely, $T_6$ spaces). Although the calculations are straightforward, to the best of our knowledge they are not covered in the literature as presented here.

\begin{definition}
    Let $(X,\tau)$ be a topological space and $V\subseteq X$. We say that $V$ is a:
    \begin{enumerate}[a -]
        \item \textbf{$G_\delta$-set} if $V$ is written as a countable union of open subsets of $X$.

        \item $F_\sigma$-set if $V$ is written as a countable intersection of closed sets.
    \end{enumerate}
\end{definition}

\begin{definition}[\cite{engelking1968outline} ]
Let $(X, \tau)$ be a topological space. The topology $\tau$ is called perfectly normal if it is normal and every closed set is $G_{\delta}$-set. The topology $\tau$ is called $T_6$ if it is Hausdorff and perfectly normal.
\end{definition}

Every metric space is $T_6$ (Corollary 4.1.13 of \cite{engelking1968outline}). 

Before we proceed, let us establish some notation about functions $f\in\mathcal C(X,\mathbb R)$ for a given topological space $X$. Given such $f$, we denote:
\begin{align*}
    Z(f)&:=\{x\in X:f(x)=0\};\\
    \mbox{supp}(f)&:=\{x\in X:f(x)\ne0\}=X\setminus Z(f)
\end{align*}

In the Proposition below, there are gathered the principal facts about $T_6$ spaces that will be used throughout this work:

% Facts about T_6 spaces (ver Engelking)
\begin{proposition}[Vendenissov, 1.5.13 and 1.5.19 of \cite{engelking1968outline}]\label{t6-carac}
    Let $X$ be a $T_1$ topological space $X$. The following conditions are equivalent:
    \begin{enumerate}
        \item The space $X$ is perfectly normal.

        \item For every open set $U$ there exists a continuous function $f: X \rightarrow \mathbb{R}$ such that $X \setminus U = f^{-1}(0)$.

        \item For every closed set $F$, there exists a continuous function $f \colon X \to \mathbb{R}$ such that $F = f^{-1}(0)$.

        \item For every pair of disjoint closed subsets $A,B\subseteq X$ there exists a continuous function $f \colon X \to \mathbb{R}$ such that $f^{-1}(\{0\})=A$ and $f^{-1}(\{1\})=B$.
    \end{enumerate}
    Moreover, if $X$ is perfectly normal, given a open set $U \subseteq X$ there exists a continuous function $g \colon X \to \mathbb{R}$ such that $g|_U$ is strictly positive and $Z(g)= U^c$.
\end{proposition}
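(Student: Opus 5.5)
The cycle of implications is $(1)\Rightarrow(3)\Rightarrow(4)\Rightarrow(1)$, with $(2)\Leftrightarrow(3)$ immediate by passing between $U$ and $F=X\setminus U$. The ``moreover'' clause will then follow from (2).

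\textbf{$(1)\Rightarrow(3)$.} Let $F$ be closed; since $X$ is perfectly normal, $F$ is a $G_\delta$-set, say $F=\bigcap_n U_n$ with each $U_n$ open. For each $n$ the closed sets $F$ and $X\setminus U_n$ are disjoint, so Urysohn's lemma (available because $X$ is normal) gives a continuous $f_n\colon X\to[0,1]$ with $f_n|_F=0$ and $f_n|_{X\setminus U_n}=1$. Put $f=\sum_n 2^{-n}f_n$. The series converges uniformly, so $f$ is continuous by the Weierstrass M-test. Clearly $f$ vanishes on $F$. If $x\notin F$, then $x\notin U_n$ for some $n$, hence $f_n(x)=1$ and $f(x)\ge 2^{-n}>0$. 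Therefore $F=f^{-1}(0)$.

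\textbf{$(3)\Rightarrow(4)$.} Take $g_A,g_B\ge 0$ with $Z(g_A)=A$ and $Z(g_B)=B$; replace each function by its absolute value if needed. Set $f=g_A/(g_A+g_B)$. The denominator never vanishes because $A\cap B=\emptyset$, so $f$ is continuous. Moreover $f(x)=0$ exactly when $g_A(x)=0$, i.e.\ on $A$, and $f(x)=1$ exactly when $g_B(x)=0$, i.e.\ on $B$.

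\textbf{$(4)\Rightarrow(1)$.} Normality is immediate: given disjoint closed $A,B$ and $f$ as in (4), the sets $f^{-1}(-\infty,1/2)$ and $f^{-1}(1/2,\infty)$ are disjoint open neighbourhoods of $A$ and $B$. For the $G_\delta$ condition, let $F$ be closed. If $F=X$ there is nothing to prove. Otherwise pick $p\notin F$; the singleton $\{p\}$ is closed because $X$ is $T_1$. Apply (4) to $A=F$ and $B=\{p\}$ to obtain $f$ with $F=f^{-1}(0)$. Then $F=\bigcap_n f^{-1}(-1/n,1/n)$, which exhibits $F$ as a countable intersection of open sets. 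This is the one place where the $T_1$ hypothesis is used.

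\textbf{Moreover clause.} Given an open $U$, take $f$ from (2) and set $g=|f|$, or equivalently $g=f^2$. Then $g$ is continuous, $Z(g)=X\setminus U$, and $g>0$ on $U$.

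The only genuinely analytic step is $(1)\Rightarrow(3)$, where uniform convergence of the series has to be checked; everything else is routine. Note that the paper's definitions of $G_\delta$ and $F_\sigma$ appear swapped. The proof should use the standard meaning: a $G_\delta$-set is a countable intersection of open sets.
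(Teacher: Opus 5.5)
Your proof is correct. The paper gives no argument of its own and only cites Engelking, and your cycle $(1)\Rightarrow(3)\Rightarrow(4)\Rightarrow(1)$ (Urysohn with $\sum 2^{-n}f_n$, then $g_A/(g_A+g_B)$, then preimages) is the standard textbook proof of Vedenissoff's theorem; you are also right that the paper's $G_\delta$/$F_\sigma$ definitions are swapped. One small correction to your commentary: in $(4)\Rightarrow(1)$ you can take $B=\emptyset$ instead of $B=\{p\}$, so $T_1$ is not actually needed for the $G_\delta$ step, and it matters only if ``normal'' is read, as in Engelking, to include $T_1$.
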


\begin{proposition}[Example 5.15 in \cite{roberto2021quadratic}]
    Let $X$ be a $T_6$ topological space. Then
    $f\in\mbox{nzd}(\mathcal C(X,\mathbb R))$ if, and only if, the interior of $Z(f)$ is empty.
\end{proposition}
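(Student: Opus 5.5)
We prove both implications directly, working with the open set $\mathrm{supp}(f)=X\setminus Z(f)$. Continuity of $f$ makes $Z(f)$ closed. The only topological input is Proposition \ref{t6-carac}, which produces functions with a prescribed zero set.

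For the direction ``interior of $Z(f)$ empty $\Rightarrow$ $f\in\mbox{nzd}$'', suppose $fg=0$ for some $g\in\mathcal C(X,\mathbb R)$; we show $g=0$. The set $U=\mathrm{supp}(g)$ is open. Since $f(x)g(x)=0$ for every $x$, we get $U\subseteq Z(f)$. An open set contained in $Z(f)$ lies in its interior, which is empty by hypothesis, so $U=\emptyset$ and $g=0$. This direction uses only continuity of $g$, not perfect normality.

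For the converse, assume the interior $U$ of $Z(f)$ is nonempty; we exhibit a nonzero $g$ with $fg=0$. Since $X$ is $T_6$, the ``Moreover'' clause of Proposition \ref{t6-carac} gives a continuous $g\colon X\to\mathbb R$ with $g|_U>0$ and $Z(g)=X\setminus U$. Because $U\neq\emptyset$, $g\neq0$. If $x\in U$, then $x\in Z(f)$, so $f(x)g(x)=0$. If $x\notin U$, then $g(x)=0$. Hence $fg=0$, so $f$ is a zero-divisor, i.e.\ $f\notin\mbox{nzd}(\mathcal C(X,\mathbb R))$.

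The only real step is producing $g$ in the converse. We do not need the exact zero set $Z(g)=X\setminus U$; a nonzero continuous function vanishing off $U$ suffices. Complete regularity of $T_6$ spaces would already provide one: take $x_0\in U$ and a Urysohn function equal to $1$ at $x_0$ and $0$ on the closed set $X\setminus U$.

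One caveat: the paper's definition of $\mbox{zd}(A)$ reads ``there exists $a\in A$ with $ab=0$''. We read this with the implicit requirement $b\neq0$; otherwise every element would be a zero-divisor.
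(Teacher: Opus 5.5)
Your proof is correct and follows the paper's argument. For the converse, the paper also uses the last assertion of Proposition \ref{t6-carac} to build a nonzero $g$ with $Z(g)=X\setminus U$ for a nonempty open $U\subseteq Z(f)$. For the other direction, it notes that the dense open set $X\setminus Z(f)$ lies in the closed set $Z(g)$, which is the same observation as your $\mathrm{supp}(g)\subseteq \mathrm{int}(Z(f))=\emptyset$. Your reading of the definition of $\mbox{zd}(A)$ with $b\neq 0$ is right, and so is your remark that complete regularity already suffices for the converse.
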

\begin{proof}
    Let $f\in \mathcal C(X,\mathbb R)$. If $U \subseteq Z(f)$ is a non-empty open set, then there exists $g \in C(X,\mathbb{R})$ such that $Z(g) = U^c$. Therefore $g$ is a non-zero function and $fg = 0$. Conversely, if $Z(f)$ has  empty interior and $g \in C(X,\mathbb{R})$ is such that $fg = 0$, then $Z(f)^c$ is open and dense while $Z(f)^c \subseteq Z(g)$. Since $g$ is continuous, $g = 0$ and so $f\in\mbox{nzd}(\mathcal C(X,\mathbb R))$.
\end{proof}

Then we have the following characterizations for $\mbox{nzd}(A)$, with $A:=\mathcal C(X,\mathbb R)$ and $X$ being a $T_6$ topological space:
\begin{align*}
    \mbox{nzd}(A)&=\{f\in A:Z(f)\mbox{ has empty interior}\};\\
    \mbox{zd}(A)&=\{f\in A:\mbox{there is a non-empty open set }U\subseteq X\mbox{ with }U\subseteq Z(f)\}.
\end{align*}

\begin{proposition}\label{nz-extension}
    Let $X$ be a $T_6$ topological space and $A:=\mathcal C(X,\mathbb R)$. If $f\in \mbox{zd}(A)$ then there exists $u\in\mbox{nzd}(A)$ such that $f|_{X\setminus Z(f)}=u|_{X\setminus Z(f)}$. Such a function $u$ will be called a \textbf{non-zero divisor continuous extension} of $f$. Moreover, if $X=\mathbb R^n$ (or more generally, a finite-dimensional Banach space) and $f$ is $C^k$, then we can choose a $C^k$ non-zero continuous extension $u$ (which will be briefly denoted by \textbf{$C^k$ non-zero divisor extension}).
\end{proposition}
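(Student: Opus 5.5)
The plan is to modify $f$ only on the interior of its zero set, since that is exactly where non-zero-divisor status fails. Put $W:=\operatorname{int} Z(f)$, which is a non-empty open set because $f\in\mathrm{zd}(A)$. By the ``moreover'' part of Proposition \ref{t6-carac}, there is a continuous $g\colon X\to\mathbb R$ with $g>0$ on $W$ and $Z(g)=X\setminus W$. Define
\[
u:=f+g .
\]
Then $u$ is continuous. Since $X\setminus Z(f)\subseteq X\setminus W=Z(g)$, we get $u=f$ on $X\setminus Z(f)$, which is the required restriction identity.

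Next I check that $u\in\mathrm{nzd}(A)$ using the characterization proved above: it suffices that $Z(u)$ has empty interior. A point $x$ lies in $Z(u)$ only in two ways. If $x\in W$, then $u(x)=g(x)>0$, so $x\notin Z(u)$. If $x\notin W$, then $g(x)=0$ and $u(x)=f(x)$. Hence $Z(u)=Z(f)\setminus W=Z(f)\setminus\operatorname{int}Z(f)$, which is the topological boundary of the closed set $Z(f)$. The boundary of a closed set has empty interior: an open set $V$ contained in $Z(f)$ satisfies $V\subseteq\operatorname{int}Z(f)=W$, so $V$ cannot lie in $Z(f)\setminus W$ unless $V=\emptyset$. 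Therefore $u$ is a non-zero divisor. Note that adding $g$ rather than some other perturbation is what keeps $f$ untouched off $W$.

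For the $C^k$ statement on $\mathbb R^n$ (or a finite-dimensional Banach space, which is linearly homeomorphic to $\mathbb R^n$), I would use the same recipe with $g$ chosen of class $C^\infty$. This rests on Whitney's theorem that every closed subset of $\mathbb R^n$ is the zero set of a smooth non-negative function. Applying it to the closed set $X\setminus W$ gives a smooth $g\ge0$ with $Z(g)=X\setminus W$, so $g>0$ on $W$. Then $u=f+g$ is $C^k$ and the argument above goes through verbatim.

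The main obstacle is this smooth version of Proposition \ref{t6-carac}, because the paper has not stated it. If it must be proved, I would write $W$ as a countable union of open balls $B_j$. For each ball I take a smooth bump $\varphi_j\ge0$ with $\{\varphi_j>0\}=B_j$. I then form $g=\sum_j\varepsilon_j\varphi_j$, choosing $\varepsilon_j>0$ small enough that $\varepsilon_j\|\varphi_j\|_{C^j}\le 2^{-j}$. The series of all derivatives then converges uniformly, so $g$ is $C^\infty$ and $Z(g)=X\setminus W$.
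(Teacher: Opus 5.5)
Your proof is correct and is essentially the paper's argument: take $g\ge 0$ with $Z(g)=X\setminus\mbox{int}(Z(f))$, perturb $f$ by $g$, and observe that $Z(u)=\partial Z(f)$ has empty interior; the $C^k$ case uses a smooth $g$ with the same zero set. The differences are minor: the paper uses $u=f-g$ rather than $f+g$, which is immaterial, and it cites the smooth zero-set fact (Lee, Section 2.4) rather than giving your $\sum_j\varepsilon_j\varphi_j$ construction, which is a valid proof of it.
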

\begin{proof}
    Let $f \in \mbox{zd}(A)$. By Proposition \ref{t6-carac}, the open set $U := \mbox{int}(Z(f))$ is non empty (here ``int'' means the interior operator). Then, for the closed set $F:=X\setminus U$ there exists a non-negative continuous function $g \in A$ such that $Z(g) = F$. Consequently, $g(x) = 0$ for all $x \notin U$, and $g(x) > 0$ for all $x \in U$.

    Let $u:=f-g$. If $x \in X \setminus Z(f)$, then $x \notin U$, which means $g(x) = 0$. Therefore, $u(x) = f(x) - 0 = f(x)$, proving that $f|_{X\setminus Z(f)} = u|_{X\setminus Z(f)}$. It remains to show that $u \in \mbox{nzd}(A)$, which is equivalent to showing that $\mbox{int}(Z(u)) = \emptyset$. We evaluate the zero-set of $u$:
    \begin{itemize}
        \item For $x \in X \setminus Z(f)$, $f(x) \neq 0$ and $g(x) = 0$, so $u(x) \neq 0$.
        \item For $x \in Z(f) \setminus U$, $f(x) = 0$ and $g(x) = 0$, so $u(x) = 0$.
        \item For $x \in U$, $f(x) = 0$ and $g(x) > 0$, so $u(x) > 0$.
    \end{itemize}
    Thus, $Z(u) = Z(f) \setminus U = Z(f) \setminus \mbox{int}(Z(f)) = \partial Z(f)$. Since the boundary of any closed set has an empty interior, $\mbox{int}(Z(u)) = \emptyset$, concluding that $u \in \mbox{nzd}(A)$.

    Suppose now that $X = \mathbb{R}^n$ (or, more generally, a finite-dimensional Banach space, which is linearly homeomorphic to $\mathbb{R}^n$). Let $f \in \mathcal{C}^k(X, \mathbb{R})$ be a zero-divisor. The set $F = X \setminus \mbox{int}(Z(f))$ is closed in $X$. A classical consequence of the existence of smooth bump functions in Euclidean spaces (see, for example, Section 2.4 of \cite{lee2000introduction}), guarantees that any closed set in $\mathbb{R}^n$ is the exact zero-set of some non-negative smooth function. Thus, there exists $g \in \mathcal{C}^\infty(X, \mathbb{R}) \subseteq \mathcal{C}^k(X, \mathbb{R})$ such that $g \ge 0$ and $Z(g) = F$.

    Defining $u := f - g$, we have that $u \in \mathcal{C}^k(X, \mathbb{R})$, $u$ coincides with $f$ outside $Z(f)$, and $Z(u) = \partial Z(f)$. Since $\mbox{int}(Z(u)) = \emptyset$, $u$ is a $\mathcal{C}^k$ non-zero extension of $f$.
\end{proof}

\subsection{Multirings and hyperfields}

\begin{definition}[Adapted from Definition 2.1 in \cite{marshall2006real}]\label{defn:multiring}
 A multiring is a sextuple $(R,+,\cdot,-,0,1)$ where $R$ is a non-empty set, $+:R\times 
R\rightarrow\mathcal P(R)\setminus\{\emptyset\}$,
 $\cdot:R\times R\rightarrow R$
 and $-:R\rightarrow R$ are functions, $0$ and $1$ are elements of $R$ satisfying:
 \begin{enumerate}[i -]
  \item $(R,+,-,0)$ is a commutative multigroup;
  \item $(R,\cdot,1)$ is a commutative monoid;
  \item $a.0=0$ for all $a\in R$;
  \item If $c\in a+b$, then $c.d\in a.d+b.d$. Or equivalently, $(a+b).d\subseteq a.d+b.d$.
 \end{enumerate}

Note that if $a \in R$, then $0 = 0.a \in (1+ (-1)).a \subseteq 1.a + (-1).a$, thus $(-1). a = -a$.
 
 $R$ is said to be an hyperring if for $a,b,c \in R$, $a(b+c) = ab + ac$. 
 
 A multiring (respectively, a hyperring) $R$ is said to be a multidomain (hyperdomain) if it has no zero divisors. A multiring $R$ will be a 
multifield if every non-zero element of $R$ has multiplicative inverse; note that hyperfields and multifields coincide. We will use "hyperfield" since this is the prevailing terminology.
\end{definition}

 We define recursively for $n\ge 2$:
\begin{align*}
 a_1+...+a_n:=\bigcup_{d\in a_2+...+a_n}a_1+d.
\end{align*}

In particular, for a multiring $A$, with $a_1,...,a_n\in A$ and $\sigma\in S_n$, we have
 $$a_1+a_2+...+a_n=a_{\sigma(1)}+a_{\sigma(2)}+...+a_{\sigma(n)}.$$
 
 \begin{definition}[Example 2.6 in \cite{marshall2006real}]\label{defn:strangeloc}
 Fix a multiring $A$ and a multiplicative subset $S$ of $A$ such that $1\in S$. Define an equivalence relation $\sim$ 
on $A$ by $a\sim b$ if, and only if, $as=bt$ for some $s,t\in S$. Denote by $\overline a$ the equivalence class of 
$a$ and set $A/_mS=\{\overline a:a\in A\}$. Then, we define in agreement with Marshall's notation, $\overline a+\overline 
b=\{\overline c:cv\in as+bt,\,\mbox{for some }s,t,v\in S\}$, $-\overline a=\overline{-a}$, and 
$\overline{a}\overline{b}=\overline{ab}$.
 \end{definition}
 
Then  $A/_mS$ is a multiring. Moreover, if $A$ is a hyperring, the same holds for $A/_mS$. The canonical projection $\pi:A\rightarrow A/_mS$ is 
a morphism.
 \begin{proposition}[2.19 in \cite{ribeiro2016functorial}]
  Let $A,B$ be a multiring and $S\subseteq A$ a multiplicative subset of $A$. Then for every morphism 
$f:A\rightarrow B$ such that $f[S]=\{1\}$, there exist a unique morphism $\tilde f:A/_mS\rightarrow B$ such 
that the following diagram commute:
$$\xymatrix{A\ar[r]^{\pi}\ar[dr]_{f} & A/_mS\ar[d]^{!\tilde f} \\ & B}$$
where $\pi:A\rightarrow A/_mS$ is the canonical projection $\pi(a)=\overline a$.
 \end{proposition}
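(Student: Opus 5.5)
The plan is the usual quotient argument: define $\tilde f$ on classes by the only possible formula, then check it is well defined, a morphism, and unique. Throughout, "morphism" means the multiring notion: $f(0)=0$, $f(1)=1$, $f(-a)=-f(a)$, $f(ab)=f(a)f(b)$, and $c\in a+b$ implies $f(c)\in f(a)+f(b)$.

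\emph{Definition and well-definedness.} Set $\tilde f(\overline a):=f(a)$. To see this does not depend on the representative, suppose $\overline a=\overline b$, so $as=bt$ for some $s,t\in S$. Since $f$ is multiplicative and $f(s)=f(t)=1$,
$$f(a)=f(a)f(s)=f(as)=f(bt)=f(b)f(t)=f(b).$$

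\emph{Compatibility with the algebraic operations.} Commutativity of the diagram, $\tilde f\circ\pi=f$, holds by construction. Preservation of $0$, $1$, negation and products follows at once from the corresponding properties of $f$, because $-\overline a=\overline{-a}$ and $\overline a\,\overline b=\overline{ab}$.

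\emph{Compatibility with the multivalued sum.} This is the only step that uses the precise form of $\overline a+\overline b$ in Definition~\ref{defn:strangeloc}. Let $\overline c\in\overline a+\overline b$. Then $cv\in as+bt$ for some $s,t,v\in S$. Since $f$ is a morphism, $f(cv)\in f(as)+f(bt)$. As $f(s)=f(t)=f(v)=1$, this reads $f(c)\in f(a)+f(b)$, that is, $\tilde f(\overline c)\in\tilde f(\overline a)+\tilde f(\overline b)$.

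\emph{Uniqueness.} $\pi$ is surjective by construction, since every element of $A/_mS$ has the form $\overline a$. Hence any $g$ with $g\circ\pi=f$ must satisfy $g(\overline a)=f(a)=\tilde f(\overline a)$ for all $a$, so $g=\tilde f$.

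No step is a real obstacle. The only point needing care is the sum compatibility, where the witnesses $s,t,v\in S$ must all be sent to $1$ at the same time. This is exactly where the hypothesis $f[S]=\{1\}$ is essential, rather than merely $f[S]\subseteq B^\times$.
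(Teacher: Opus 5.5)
Your proof is correct and is the standard universal-property argument: define $\tilde f(\overline a)=f(a)$, check it is well defined, check the morphism axioms including the multivalued sum, and get uniqueness from surjectivity of $\pi$. The paper itself gives no proof of this statement (it is quoted from \cite{ribeiro2016functorial}). One correction to your closing comment: the hypothesis $f[S]=\{1\}$ is already indispensable at the well-definedness step, since with only $f[S]\subseteq B^\times$ the identity $f(a)f(s)=f(b)f(t)$ would not yield $f(a)=f(b)$, so the sum step is not the only place where it matters.
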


\begin{definition}[4.1 and 4.2 of \cite{marshall2006real}]\label{defn:mfrealreduced}
 A hyperfield $F$ is said to be \textbf{real reduced} if $a^3=a$ for all $a\in F$ and $a\in1+1$ imply $a=1$.
\end{definition}

It is proved in \cite{ribeiro2016functorial} that the category of real reduced hyperfields is equivalent to the category of reduced special groups.

\begin{definition}[7.5 and 7.6 of \cite{marshall2006real}]\label{defn:mrrealreduced}
A multiring $A$ is \textbf{real reduced} if it is semi real ($-1\notin\sum A^2$), $1\ne0$, and the following properties holds for all $a,b,c,d\in A$:
\begin{enumerate}[i -]
 \item $a^3=a$;
 \item if $c\in a+ab^2$ then $c=a$ (in other words, $a+ab^2=\{a\}$);
 \item if $c\in a^2+b^2$ and $d\in a^2+b^2$ then $c=d$ (and from (iii), we conclude that this unique element $c\in a^2+b^2$ is a square). 
\end{enumerate}
\end{definition}

\subsection{Real semigroups}

\begin{definition}[Ternary Semigroup, Definition 1.1 of \cite{dickmann2004real}]\label{defn:tsemig}
 A \textbf{ternary semigroup} (abbreviated TS) is a structure $(S,\cdot,1,0,-1)$ with individual 
constants $1,0,-1$ and a binary operation ``$\cdot$'' such that:
\begin{description}
 \item [TS1] $(S,\cdot,1)$ is a commutative semigroup with unity;
 \item [TS2] $x^3=x$ for all $x\in S$;
 \item [TS3] $-1\ne1$ and $(-1)(-1)=1$;
 \item [TS4] $x\cdot0=0$ for all $x\in S$;
 \item [TS5] For all $x\in S$, $x=-1\cdot x\Rightarrow x=0$.
\end{description}

We shall write $-x$ for $(-1)\cdot x$. The semigroup verifying conditions [TS1] and [TS2] (no extra 
constants) will be called \textbf{3-semigroups}. We denote $\mbox{Id}(S)=\{x\in S:x^2=x\}=S^2$ and $S^*=\{x\in S:x^2=1\}$.
\end{definition}

% Exemplos
\begin{example}[1.2(a) of \cite{dickmann2004real}]
  The three-element structure $\bm 3=\{1,0,-1\}$ has an obvious ternary semigroup structure.
\end{example}

Here, we will enrich the language $\{\cdot,1,0,-1\}$ with a ternary relation $D$.  We shall write $a\in D(b,c)$ instead of $D(a,b,c)$. We 
also set:
$$a\in D^t(b,c)\mbox{ if, and only if, } a\in D(b,c),\mbox{ and }-b\in D(-a,c),\mbox{ and } -c\in D(b,-a).$$
The relations $D$ and $D^t$ are called \textbf{representation} and \textbf{transversal representation} 
respectively.

\begin{definition}[Real Semigroup, 2.1 of \cite{dickmann2004real}]\label{defn:realsemigroup}
 A \textbf{real semigroup} is a ternary semigroup together with a ternary relation $D$ 
satisfying:
\begin{description}
 \item [RS0] $c\in D(a,b)$ if, and only if, $c\in D(b,a)$.
 \item [RS1] $a\in D(a,b)$.
 \item [RS2] If $a\in D(b,c)$, then $ad\in D(bd,cd)$.
 \item [RS3 (Strong Associativity)] If $a\in D^t(b,c)$ and $c\in D^t(d,e)$, then there exists $x\in 
D^t(b,d)$ such that $a\in D^t(x,e)$.
 \item [RS4] If $e\in D(c^2a,d^2b)$, then $e\in D(a,b)$.
 \item [RS5] If $ad=bd$, $ae=be$, and $c\in D(d,e)$, then $ac=bc$.
 \item [RS6] If $c\in D(a,b)$, then $c\in D^t(c^2a,c^2b)$.
 \item [RS7 (Reduction)] If $D^t(a,-b)\cap D^t(b,-a)\ne\emptyset$, then $a=b$.
 \item [RS8] If $a\in D(b,c)$, then $a^2\in D(b^2,c^2)$.
\end{description}
\end{definition}

The theory of real semigroups can be alternatively axiomatized by the transversal relation $D^t$. In this case, we define
$$c\in D(a,b)\mbox{ if, and only if, } c\in D^t(c^2a,c^2b).$$

\begin{example}[Real Semigroups and Rings, 2.2 of \cite{dickmann2004real}]\label{rsrings}
Let $A$ be a semi-real ring ($-1\notin\sum A^2$). For $a\in A$, define $\overline a:\mbox{Sper}(A)\rightarrow\bm3$ by the following rule
$$\overline a(\alpha)=\begin{cases}
                          1\mbox{ if }a\in\alpha\setminus(-\alpha) \\
                          0\mbox{ if }a\in\alpha\cap-\alpha \\
                          -1\mbox{ if }a\in(-\alpha)\cap\alpha.
                         \end{cases}.$$
                         
Let $G_A$ be the set $G_A:=\{\overline 
a:\mbox{Sper}(A)\rightarrow\bm3:a\in A\}$. Then $G_A$ with the operation induced by product in $A$ is a ternary semigroup. More generally, given a (proper) preorder $T$ 
of a ring $A$ one can relativize the definition above to $T$, by considering functions $\overline 
a$ defined on $\mbox{Sper}(A,T)=\{\alpha\in\mbox{Sper}(A):\alpha\supseteq T\}$, instead of 
$\mbox{Sper}(A)$. The corresponding ternary semigroup will be denoted $G_{A,T}$.

Now, we will equip the ternary semigroup with the representation and transversal representation 
relations given for $a,b,c\in A$, by:
\begin{align*}\overline c\in D_A(\overline a,\overline b)& \mbox{ if, and only if, for all }\alpha\in\mbox{Sper}(A)\mbox{ holds } \overline c(\alpha)=0,\mbox{ or }\overline a(\alpha)\overline  c(\alpha)=1,\mbox{ or }\overline b(\alpha)\overline c(\alpha)=1. \\
\overline c\in D^t_A(\overline a,\overline b) &\mbox{ if, and only if, for all }\alpha\in\mbox{Sper}(A) \mbox{ holds }\\
&(\overline c(\alpha)=0\mbox{ and }\overline 
a(\alpha)=\overline{-b}(\alpha)),\mbox{ or }\overline a(\alpha)\overline c(\alpha)=1,\mbox{ or }
 \overline b(\alpha)\overline c(\alpha)=1.
\end{align*}
We have that $G_A$ is a real semigroup. A similar definition with 
$\mbox{Sper}(A)$ replaced by $\mbox{Sper}(A,T)$ ($T$ a proper preordering of $A$) also endows the 
ternary semigroup $G_{A,T}$ with a structure of real semigroup.
\end{example}

\begin{theorem}[Separation Theorem, 4.4 of \cite{dickmann2004real}]\label{teo:septeo}
 Let $G$ be a real semigroup, and $a,b,c\in G$ and $X_G=Hom(G,\bm3)$. Then:
 \begin{enumerate}[i -]
  \item $a\in D_G(b,c)$ if, and only if, for all $h\in X_G$, $h(a)\in D_{\bm3}(h(b),h(c))$.
  \item $a\in D^t_G(b,c)$ if, and only if, for all $h\in X_G$, $h(a)\in D^t_{\bm3}(h(b),h(c))$.
  \item If $a\ne b$, there exists $h\in X_G$ such that $h(a)\ne h(b)$.
 \end{enumerate}
\end{theorem}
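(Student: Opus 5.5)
The plan is to prove (i) and (ii) by a Zorn's lemma argument producing characters $h\colon G\to\bm 3$, and then derive (iii) formally from (ii) and the reduction axiom RS7. Throughout, the ``only if'' directions of (i) and (ii) are routine: every $h\in X_G$ is a morphism of real semigroups and therefore preserves $D$ and $D^t$.

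\textbf{Step 1 (reduction to a separation statement).} By RS2 and the fact that $a=a^3$, the relation $a\in D(b,c)$ is equivalent to $a\in D(a^2b,a^2c)$. So it suffices to treat the case in which $a$ is replaced by an idempotent multiple. Concretely, I would show: if $a\notin D_G(b,c)$, then there is $h\in X_G$ with $h(a)\neq 0$, $h(ab)\neq 1$ and $h(ac)\neq 1$. By the definition of $D_{\bm 3}$, this $h$ witnesses $h(a)\notin D_{\bm 3}(h(b),h(c))$.

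\textbf{Step 2 (Zorn's lemma).} Here I would use the equivalence, recalled in the introduction, between real semigroups and real reduced multirings (Marshall), and work with prime ``positive cones''. Say a subset $P\subseteq G$ is a \emph{proto-order} if:
\begin{itemize}
\item $P\cdot P\subseteq P$;
\item $P$ is closed under representation, i.e. $x,y\in P$ and $z\in D(x,y)$ imply $z\in P$;
\item $G^2\subseteq P$;
\item $-1\notin P$.
\end{itemize}
Start from the proto-order generated by the squares and $-ab$, $-ac$, which must avoid $a\cdot(\text{something positive})$. Its properness is exactly the failure of $a\in D(b,c)$, and showing this uses RS3 (strong associativity) to collapse iterated representations into a single binary one. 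Zorn's lemma then gives a maximal proto-order $P$ avoiding the required elements.

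\textbf{Step 3 (maximal proto-orders are orderings).} This is the main obstacle. One must show that the maximal $P$ is an ordering: $P\cup -P=G$, and $P\cap -P$ is a prime ideal of the semigroup. The first thing I would try is the classical argument for preorderings of rings. If $x\notin P$ and $-x\notin P$, enlarge $P$ to the sets $\{z: z\in D(p,xq)\}$ and $\{z: z\in D(p,-xq)\}$ with $p,q\in P$. Both enlargements must then contain a forbidden element. Combining the two representations via RS3, RS4 and RS6 should produce a forbidden element already in $P$, which is a contradiction. Primality of the support is handled the same way. Once $P$ is an ordering, the character is $h(x)=1$ on $P\setminus -P$, $h(x)=0$ on $P\cap -P$, and $h(x)=-1$ on $-P\setminus P$. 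This $h$ is a semigroup morphism, and it preserves $D$ because $P$ is closed under representation. This gives (i). For (ii), since $D^t$ is a conjunction of three $D$-conditions, (ii) follows from (i) applied to each conjunct; the $D^t$-axiomatization noted after Definition~\ref{defn:realsemigroup} can be used to reconcile the pointwise clauses in $\bm 3$.

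\textbf{Step 4 (item (iii)).} Suppose $h(a)=h(b)$ for all $h\in X_G$, and write $x=h(a)=h(b)$. In $\bm 3$ we have $0\in D^t_{\bm 3}(x,-x)$, since the clause ``$0$ and $x=-(-x)$'' holds. By (ii), $0\in D^t_G(a,-b)\cap D^t_G(b,-a)$. Then RS7 yields $a=b$, which proves (iii).
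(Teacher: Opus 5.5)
The paper gives no proof of this statement; it quotes it from Dickmann--Petrovich. I am therefore judging your sketch on its own, and there is a genuine gap.

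\textbf{What is correct.} The peripheral parts hold up. The ``only if'' directions are routine. Deriving (ii) from (i) works, since $D^t$ is a conjunction of three $D$-conditions and $D^t_{\bm{3}}$ arises from $D_{\bm{3}}$ by the same formula. The passage from an ordering $P$ to a character is fine: if $a\in D(b,c)$, RS2 gives $-a^2\in D(-ab,-ac)$, so $-ab,-ac\in P$ forces $-a^2\in P$. Step 4 is exactly right: $0\in D^t_{\bm{3}}(x,-x)$ for every $x$, so (ii) gives $0\in D^t_G(a,-b)\cap D^t_G(b,-a)$, and RS7 gives $a=b$.

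\textbf{The starting lemma is missing.} The whole content of the theorem is the ``if'' half of (i), i.e.\ your Steps 2--3, and there every substantive claim is announced rather than proved. The Zorn family must be ``$-ab,-ac\in P$ and $-a^2\notin P$''. If ``properness'' means $-1\notin P$, as in your definition, it is not equivalent to $a\notin D(b,c)$. For a counterexample, in $\bm{3}^{\{\alpha,\beta\}}$ take $a=(1,0)$ and $b=c=(1,-1)$. Then $a\in D(b,c)$, yet the cone generated by $G^2,-ab,-ac$ lies in the ordering at $\beta$ and so omits $-1$. Also, ``avoid $a\cdot(\text{something positive})$'' is not a usable condition, since $a\cdot 0=0$ lies in every proto-order. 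More seriously, the claim that this cone omits $-a^2$ whenever $a\notin D(b,c)$ is a Positivstellensatz-type lemma. Proving it needs three things:
\begin{enumerate}
\item an explicit description of the cone as the elements iteratively represented by $G^2$, $-abG^2$, $-acG^2$, $a^2bcG^2$;
\item a proof that this set is closed under products and under $D$;
\item a derivation of $a\in D(a^2b,a^2c)$ from $-a^2$ lying in it.
\end{enumerate}
The rules this needs, such as associativity of $n$-ary $D$-representation and a product rule for representations, are not among RS0--RS8. RS3 speaks only about $D^t$.

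\textbf{The maximality step does not go through as described.} The classical argument isolates the $x$-term in the two representations and multiplies them, which needs reversibility. $D$ is not reversible: $0\in D_{\bm{3}}(1,1)$ but $-1\notin D_{\bm{3}}(0,1)$. So enlargements built from $D$ do not support that computation. You would have to:
\begin{enumerate}
\item multiply through by $a^2$ and pass to $D^t$ via RS6;
\item use reversibility of $D^t$;
\item use a product rule: if $u\in D^t(s,t)$ and $v\in D^t(s',t')$, then $uv$ is $D^t$-represented by $ss',st',ts',tt'$;
\item use RS3 to regroup into a single representation with coefficients in $P$.
\end{enumerate}
You must also check that your enlargements are proto-orders at all. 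The same issues arise for primality of the support.

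\textbf{The appeal to multirings is circular.} You cannot import this calculus from the equivalence with real reduced multirings. In the cited sources that equivalence is obtained through abstract real spectra, and the real-semigroup half of that passage is precisely this Separation Theorem.

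\textbf{What would complete it.} The sketch becomes a proof only if you derive the $D^t$-calculus directly from the axioms. Reversibility is built into $D^t$, associativity is RS3, $cd\in D^t(ad,bd)$ is RS2 applied to each conjunct, and $D^t(a,0)=\{a\}$ follows from RS1, RS2, RS4 and RS7. You then still need to prove the starting lemma and carry out the maximality computations explicitly.
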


\begin{theorem}[4.14 and 4.17 of \cite{ribeiro2016functorial}]\label{teo:RS+RRM}
$ $
\begin{enumerate}[a -]
 \item Let $(G,\cdot,1,0,-1,D)$ be a real semigroup and define $+:G\times 
G\rightarrow\mathcal{P}(G)\setminus\{\emptyset\}$, $ a + b = D^t(a,b) $ and $-:G\rightarrow 
G$ by $-(g)=-1\cdot g$. Then $(G,+,\cdot,-,0,1)$ is a real reduced multiring.
 \item Let $A$ be a real reduced multiring. Then $(A,\cdot,1,0,-1,D)$ is a real semigroup, where $d\in D(a,b)$ if, and only if, $d\in  d^2a+d^2b$.
\end{enumerate}
\end{theorem}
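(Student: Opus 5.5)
This theorem is the dictionary between real semigroups and real reduced multirings (4.14 and 4.17 of \cite{ribeiro2016functorial}), so the plan has two independent halves: in (a) check the axioms of Definitions \ref{defn:multiring} and \ref{defn:mrrealreduced} for $a+b:=D^t(a,b)$; in (b) check RS0--RS8 for $d\in D(a,b)\iff d\in d^2a+d^2b$. The tool I would use throughout is reduction to the ternary semigroup $\bm 3$. In direction (a), the Separation Theorem \ref{teo:septeo} says that $D$, $D^t$ and equality in $G$ are all detected pointwise by the characters $h\in X_G=Hom(G,\bm3)$. Hence any universal Horn-type statement built from these relations holds in $G$ as soon as it holds in $\bm3$. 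The statements that carry existential quantifiers need separate treatment.

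\textbf{Direction (a).} The multiplicative part, $(G,\cdot,1)$ a commutative monoid with $a\cdot0=0$, is exactly TS1 and TS4. For the multigroup structure I would check the following, one at a time.
\begin{enumerate}[i -]
 \item Non-emptiness of $a+b$: by RS1 and RS6, $a\in D^t(a^2a,a^2b)=D^t(a,a^2b)$, and a further pointwise check gives an element of $D^t(a,b)$ itself.
 \item Commutativity of $+$: this is RS0 transported to $D^t$.
 \item $0$ as neutral element: $a+0=\{a\}$ is a pointwise check in $\bm3$.
 \item The reversibility law ``$c\in a+b\Rightarrow a\in c+(-b)$'': this is built into the symmetric definition of $D^t$.
 \item Uniqueness of inverses: if $0\in a+b$ then $b=-a$, by Separation together with the $\bm3$ computation.
 \item Associativity of the multivalued sum, $(a+b)+c=a+(b+c)$: this is exactly RS3 (strong associativity), applied in both directions using commutativity.
\end{enumerate}
Distributivity $(a+b)d\subseteq ad+bd$ is RS2 for $D^t$, again verified through characters. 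Real reducedness then follows by the same method. The identity $a^3=a$ is TS2. The equality $a+ab^2=\{a\}$ and the single-valuedness of $a^2+b^2$ reduce to $\bm3$, where they are immediate. For semi-reality, $-1\notin\sum G^2$ holds because every sum of squares is $\ge0$ under every character, and $X_G\neq\emptyset$ since $-1\ne1$ (TS3 with Separation (iii)).

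\textbf{Direction (b).} Given a real reduced multiring $A$, I would first prove a separation theorem for $A$. Every character $A\to Q_2$ or $A\to\bm3$ (where $\bm3$ carries its multiring structure) restricts to a real semigroup character, and real reducedness yields enough such characters to separate points. This is Marshall's representation of real reduced multirings as subobjects of $\bm3^X$ (7.x of \cite{marshall2006real}). With that in hand, RS0, RS1, RS2, RS4, RS5, RS6 and RS8 become pointwise identities in $\bm3$. RS7 (reduction) follows from $a+ab^2=\{a\}$ together with separation. RS3 again comes from multigroup associativity, once one shows that $D^t$ defined from $D$ coincides with the hypersum $+$.

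\textbf{Main obstacle.} The hard step is the existential axiom: in (a), associativity of $D^t$ as a hyperoperation, and in (b), RS3 together with the identification $D^t=+$. Existential statements are not preserved automatically by pointwise reasoning, so I would not try to argue them through characters. Instead I would take strong associativity RS3 as the given witness-producing axiom and match it directly with the definition of associativity for the hypersum. The delicate part is the bookkeeping with the zero cases, where $c^2\neq1$ and the transversal condition $\overline a(\alpha)=\overline{-b}(\alpha)$ must be tracked. The separation theorems are used only for the universal clauses.
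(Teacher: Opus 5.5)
The paper gives no proof of this statement; it is quoted from \cite{ribeiro2016functorial}, so your outline has to stand on its own. Its overall shape is the standard one: universal clauses in (a) come from the Separation Theorem \ref{teo:septeo}, and (b) goes through a representation of $A$ by characters into $\bm 3$. Most items of (a) are fine, including associativity from RS3 plus commutativity. There are, however, two places where the argument as you state it does not go through.

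\textbf{Non-emptiness in (a).} That $a+b=D^t(a,b)$ is non-empty is an existential statement, so it cannot come from ``a further pointwise check''. A check in $\bm 3$ only certifies membership of an explicit term. No product term $t(a,b)$ lies in $D^t_{\bm 3}(a,b)$ for all $a,b\in\bm 3$: you would need $t(a,0)=a$ and $t(0,b)=b$, and neither $0$ nor any $\pm a^ib^j$ achieves both. The witness must come from RS3. The facts $a\in D^t(a,0)$ and $0\in D^t(b,-b)$ hold pointwise, hence in $G$. Strong associativity applied to them yields some $x\in D^t(a,b)$.

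\textbf{The representation theorem in (b).} This is the more serious gap: characters that merely separate points do not suffice. Only RS5 and RS7 have equalities as conclusions. The other axioms you want to transfer (RS1, RS4, RS6, RS8, and the identification $D^t=+$ you need for RS3) conclude memberships in hypersums, $d\in d^2a+d^2b$ or $c\in a+b$. To get these in $A$ from their validity under every $\sigma\in\mbox{Hom}(A,\bm 3)$, you need the downward half of a representation theorem: $c\in a+b$ if and only if $\sigma(c)\in\sigma(a)+\sigma(b)$ for all $\sigma$. Injectivity of $A\to\bm 3^{X}$ says nothing of this kind. That statement is the non-trivial content of Marshall's correspondence between real reduced multirings and abstract real spectra (the duality the paper invokes before Theorem \ref{HL-03}). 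It is also where the axioms of Definition \ref{defn:mrrealreduced} do their real work. Cite it in this strong form, not as ``enough characters to separate points'' or ``subobjects of $\bm 3^X$'' (which could mean a mere embedding).

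Once you have it, your ``main obstacle'' disappears. In $\bm 3$ both $D^t(a,b)$ and $a+b$ equal $\{c:(c=0\wedge a=-b)\vee ac=1\vee bc=1\}$. So $D^t=+$ in $A$ is a pointwise check, and RS3 is then just multigroup associativity. RS0 and RS2 follow directly from commutativity, distributivity and $d^3=d$. The hard step of (b) is therefore the representation theorem, not RS3.

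\textbf{Ternary semigroup axioms in (b).} You never check TS1--TS5 for $A$. They are easy, but should be stated: $-1\neq 1$ follows from semi-reality. For TS5, if $x=-x$ then $0\in x+(-x)=x+x\cdot 1^2=\{x\}$, so $x=0$.
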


\begin{definition}
    A real semigroup $ R $ is said to be \emph{Boolean} if $ X_R $ is a Boolean topological space. We also say that $ R $ is \emph{von Neumann} if $ R $ is Boolean and its lattice of idempotents is a Boolean algebra, i.e., for each $ x \in \mbox{Id}(R) $ there is a unique $ y \in \mbox{Id}(R) $ such that $ 1 \in D_R^t(x, y) $ and $ xy = 0 $.
\end{definition}

\begin{theorem}[Theorem 3.6 in \cite{miraglia2021boolean}]\label{rsbool-carac}
Let $ R $ be a real semigroup. The following are equivalent:

\begin{enumerate}[i -]
\item For all $ a \in R $, there exists $ u \in R^\times $ such that $ a = u a^2 $ and $ u \in D^t(-1, a) $.
\item For all $ a \in \mbox{Id}(R) $, there exists $ u \in R^\times $ such that $ a = u a $ and $ u \in D^t(-1, a) $.
\item $ R $ is a Boolean real semigroup.
\item $ R^\times $, with the induced representation from $ R $, is a reduced special group and the induced morphism $ X_R \to X_{R^\times} $ is a homeomorphism.
\end{enumerate}
\end{theorem}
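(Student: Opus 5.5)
The statement is the equivalence of (i)–(iv) in Theorem \ref{rsbool-carac}. I would prove it by the cycle (i)$\Rightarrow$(ii)$\Rightarrow$(iii)$\Rightarrow$(iv)$\Rightarrow$(i), working throughout in the dual picture given by the Separation Theorem \ref{teo:septeo}. Each element $a\in R$ is viewed as a $\bm 3$-valued function on $X_R=Hom(R,\bm3)$. The space $X_R$ carries the spectral (constructible-type) topology generated by the sets $[a=1]$, $[a=0]$ and $[a=-1]$. Representation and transversal representation are then checked pointwise in $\bm 3$.

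The step (i)$\Rightarrow$(ii) is immediate. If $a\in\mbox{Id}(R)$ then $a^2=a$, so $a=ua^2$ becomes $a=ua$.

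For (ii)$\Rightarrow$(iii), fix $a$ and apply (ii) to the idempotent $a^2$. This gives $u\in R^\times$ with $a^2=ua^2$ and $u\in D^t(-1,a^2)$. The plan is to check pointwise in $\bm3$ what $u\in D^t(-1,a^2)$ means.
\begin{itemize}
\item At $h$ with $h(a^2)=1$, we have $h(u)=1$.
\item At $h$ with $h(a^2)=0$, the condition forces $h(u)=-1$.
\end{itemize}
Hence the zero set $[a=0]=[u=-1]$. Since $u$ is invertible, $[u=-1]$ is the complement of $[u=1]$, so it is clopen in the spectral topology. Once every zero set is clopen, the spectral topology on $X_R$ coincides with its constructible (patch) topology. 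The patch topology is always compact, Hausdorff and zero-dimensional, so $X_R$ is Boolean.

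For (iii)$\Rightarrow$(iv), first restrict $D$ to $R^\times$ and verify the reduced special group axioms. These follow from RS0–RS8 and from the fact that $D$ and $D^t$ agree on units. The harder part is showing that restriction $X_R\to X_{R^\times}$ is a bijection. Injectivity says that a character is determined by its values on units, since its zero set must be recovered from them. Surjectivity says that every character of $R^\times$ extends to $R$. Continuity is clear, and a continuous bijection between compact Hausdorff spaces is a homeomorphism.

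To get both properties, I would use Booleanness to produce, for each $a$, a unit $u$ with $[a=0]=[u=-1]$ and $a=ua^2$. Concretely, choose $u$ equal to $a$ where $a\neq0$ and equal to $-1$ on the clopen set $[a=0]$. Such a $u$ must then be realized as an element of $R$; I would try to obtain it from strong associativity RS3 together with compactness of $X_R$, gluing finitely many local representations. With this $u$ available, the values of a character on $a$ are determined by its values on units: $h(a)=0$ iff $h(u)=-1$, and $h(a)=h(u)$ otherwise. This gives injectivity, and an extension of a character of $R^\times$ can be defined by the same formula, which gives surjectivity.

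Finally (iv)$\Rightarrow$(i) follows the same pattern. Using the homeomorphism, the clopen set $[a=0]$, pulled back to $X_{R^\times}$, is a clopen subset of the space of orderings of a reduced special group. I would then invoke the standard fact that such a clopen set is a finite union of basic sets cut out by Pfister forms to realize the required unit $u$ there. Checking $a=ua^2$ and $u\in D^t(-1,a)$ is then pointwise by Theorem \ref{teo:septeo}.

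The main obstacle is the same in (iii)$\Rightarrow$(iv) and (iv)$\Rightarrow$(i): realizing a prescribed continuous function $X_R\to\{\pm1\}$ as an actual unit of $R$. My first attempt would be the gluing argument through RS3 and compactness.
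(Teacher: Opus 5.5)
Your (i)$\Rightarrow$(ii) and (ii)$\Rightarrow$(iii) are correct; the value $1$ on $[a\neq0]$ comes from $a^2=ua^2$, not from $D^t_{\bm3}(-1,1)=\bm3$. The paper only quotes this theorem without proof, so I judge the rest on its own. The gap is the step you flag yourself, and neither route you sketch closes it.

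\textbf{The realization target is false as stated.} Even for Boolean $R$, not every continuous $X_R\to\{\pm1\}$ is a unit. For the field $\mathbb{R}((x))((y))$ there are four orderings, each an isolated point, but only eight square classes, and none is positive at exactly one ordering. So in (iv)$\Rightarrow$(i), writing a clopen set as a finite union of Pfister/Harrison basic sets gives a Boolean combination of sets $[g=1]$, not a single unit. What is needed is that the \emph{specific} set $[a=1]$ equals $[u=1]$ for a unit $u\in D^t(-1,a)$. That needs input from $R$, not just the topology of $X_{R^\times}$.

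\textbf{Further errors in (iii)$\Rightarrow$(iv).}
\begin{enumerate}
\item $a=ua^2$ forces $u=a$ on $[a\neq0]$, so $[u=-1]\supseteq[a=-1]$. You therefore cannot also have $[a=0]=[u=-1]$, and ``$h(a)=0$ iff $h(u)=-1$'' is false. Recovering $h(a)$ needs two units, e.g.\ those given by (i) for $a$ and for $-a$.
\item Surjectivity ``by the same formula'' requires showing that the formula defines a character of $R$, i.e.\ that it is multiplicative and preserves $D$. That is the actual work, and you do not do it.
\item The special group axioms for $R^\times$ do not follow from RS0--RS8 alone. For units $b,d$, the RS3 witness $x\in D^t(b,d)$ need not be a unit, since $D^t_{\bm3}(1,-1)=\bm3$. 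Your argument for this step never uses (iii). If it worked, $R^\times$ would be a reduced special group for every real semigroup, which is exactly the question the paper calls open.
\end{enumerate}

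\textbf{The missing ingredient.} You need a gluing lemma for Boolean $R$, and Marshall's abstract H\"ormander--\L{}ojasiewicz inequality supplies it (Theorem~\ref{HL-01}, in the multiring form of Theorem~\ref{HL-03}). If $X_R$ is Boolean, $Y=[a=1]$ is closed in the spectral topology. Take $f=a$ and $g=-1$; the hypothesis holds vacuously because $Y\cap Z(a)=\emptyset$. This gives $f_1\in D^t(a,-1)$ with $f_1=1$ on $Y$. Pointwise, $D^t_{\bm3}(0,-1)=D^t_{\bm3}(-1,-1)=\{-1\}$ forces $f_1=-1$ off $Y$. Hence $f_1$ is a unit, $a=f_1a^2$ and $f_1\in D^t(-1,a)$, which is (iii)$\Rightarrow$(i) in one step.

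This also shows where Booleanness enters. In $G_{\mathbb{R}[t]}$ with $a=t$ no such unit exists: $[u=1]$ is clopen for every unit $u$, while $[t=1]$ is not closed.

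The same lemma fixes the associativity problem. Take units $a,b,c,d,e$ with $a\in D^t(b,c)$ and $c\in D^t(d,e)$, and let $x\in D^t(b,d)$ with $a\in D^t(x,e)$ be given by RS3. On $[x=0]$ one necessarily has $b=-d$ and $a=e$. Apply the lemma with $f=x$, $g=a$, $Y=[x\neq0]$. The resulting element equals $x$ on $[x\neq0]$ and $a$ on $[x=0]$, so it is a unit witness.

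\textbf{Suggested organization.} Prove (i)$\Rightarrow$(ii)$\Rightarrow$(iii)$\Rightarrow$(i) as above. Then prove (i)$\Rightarrow$(iv) with these units, including a genuine check that extended characters are characters of $R$. Finally, (iv)$\Rightarrow$(iii) is immediate because $X_{R^\times}$ is Boolean. This avoids doing the hard realization step twice.
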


\section{Generalized Marshall Quotients for real-valued continuous functions}\label{cont-rs}

Our goal is to provide explicitly calculated new classes of examples of real semigroups provenient from real-valued continuous functions. The main inspiration is Example 5.15 of \cite{roberto2021quadratic}, where is proved there that $A/_mq(A)$ is a real reduced multiring. Combining this result with Theorem \ref{rsbool-carac} we get a stronger result in Theorem \ref{str-01}, proving that the invertible elements in $A/_mq(A)$ constitute a reduced special group/real reduced hyperfield. This first example with the new ones obtained in Sections \ref{cont-rs} and \ref{der-rs} are, to the best of our knowledge, the first explicitly examples of boolean real semigroups calculated from continuous and smooth real-valued functions. 

Throughout this entire Section $X$ denotes a $T_6$ space and $A:=\mathcal C(X,\mathbb R)$ denotes the ring of real-valued continuous functions from $X$ to $\mathbb R$.

\begin{theorem}\label{str-01}
    Let $f \in \mbox{zd}(A)$. There exists a non-zero continuous extension $u \in \mbox{nzd}(A)$ of $f$ such that, $[f] = [u][f^2]$ and $[u] \in [-1] + [f]$ in $A/_m q(A)$. Moreover, $A/_m q(A)$ is a Boolean real semigroup and $(A/_m q(A))^\times\cup\{[0]\}$ is a real reduced hyperfield (reduced special group).
\end{theorem}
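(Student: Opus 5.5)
The plan is to verify condition (i) of Theorem \ref{rsbool-carac} for the real semigroup associated with $A/_m q(A)$. That is, for every $f\in A$ I want a unit $[u]$ with $[f]=[u][f^2]$ and $[u]\in[-1]+[f]$. Here I use that $A/_m q(A)$ is a real reduced multiring (Example 5.15 of \cite{roberto2021quadratic}), hence a real semigroup with $D^t(a,b)=a+b$ by Theorem \ref{teo:RS+RRM}. Theorem \ref{rsbool-carac} then yields at once that the semigroup is Boolean and that $(A/_m q(A))^\times$ is a reduced special group. Equivalently, by \cite{ribeiro2016functorial}, $(A/_m q(A))^\times\cup\{[0]\}$ is a real reduced hyperfield.

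\emph{Easy facts first.} If $u\in\mbox{nzd}(A)$, then $[u]$ is a unit, because $u^2\in q(A)$ gives $[u]^2=[u^2]=[1]$. If $f\in\mbox{nzd}(A)$ itself, I take $u=f$. Then $[f][f^2]=[f^3]=[f]$ by the axiom $a^3=a$ of real reducedness. Also $[f]\in[-1]+[f]$, which is RS1 read through $D^t$, or directly axiom (ii) of Definition \ref{defn:mrrealreduced}.

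\emph{Zero-divisor case.} For $f\in\mbox{zd}(A)$ I use the extension $u=f-g$ of Proposition \ref{nz-extension}. Here $g\ge 0$ and $Z(g)=X\setminus\mbox{int}Z(f)$, so $fg=0$ identically. Then $uf^2=f^3-gf^2=f^3$, so $[u][f^2]=[f^3]=[f]^3=[f]$ by real reducedness, and $[u]$ is a unit by the fact above. This settles the first identity in the statement.

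The remaining and main step is $[u]\in[-1]+[f]$. By Definition \ref{defn:strangeloc}, this means finding $s,t,v\in q(A)$ with $uv=-s+ft$. A natural first attempt is $v=t=f^2+g$. This is a square of the continuous function $\sqrt{f^2+g}$, and its zero set is $\partial Z(f)$, which has empty interior, so it lies in $q(A)$. One computes
$$uv=(f-g)(f^2+g)=f^3-g^2=f\,t-g^2.$$
So the only defect is that $g^2$ need not be a non-zero-divisor. I expect this to be the main obstacle. To fix it, I would rescale $g$ so that the leftover term becomes a nonnegative square whose zero set is again only $\partial Z(f)$. For instance, I would replace $g$ by a modification of the form $g+\varepsilon(f^2)$, or use $s=g^2+f^2\,w$ for a suitable square $w$ and adjust $t$ accordingly, checking the identity pointwise on the three regions $X\setminus Z(f)$, $\partial Z(f)$ and $\mbox{int}Z(f)$.

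If the explicit algebra resists, the fallback is the Separation Theorem \ref{teo:septeo}. There it suffices to check $h([u])\in D^t_{\bm 3}(-1,h([f]))$ for every character $h$. The values $h([u])$ and $h([f])$ are controlled by the sign of $u$ and $f$ near the "points" underlying $h$, where $u=f$ off $\mbox{int}Z(f)$ and $u>0$ where $f$ vanishes on an open set. Checking this requires only the $\bm 3$ table: whenever $h([f])\neq 0$ we have $h([u])=h([f])$, and whenever $h([f])=0$ the value $h([u])$ is $\pm1$, which lies in $D^t_{\bm 3}(-1,0)$ only if it equals $-1$. This last point forces the sign convention $u=f-g$ rather than $f+g$, and it is exactly what must be confirmed.
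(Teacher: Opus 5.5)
The parts you complete are correct:
\begin{itemize}
\item the reduction to condition (i) of Theorem \ref{rsbool-carac};
\item the observation that $u\in\mbox{nzd}(A)$ makes $[u]$ a unit;
\item the non-zero-divisor case;
\item the identity $[f]=[u][f^2]$, obtained from $fg=0$.
\end{itemize}
One small correction in the non-zero-divisor case: when $[f]^2=[1]$, the RS1 route does give $[f]\in[-1]+[f]$, since $[f]\in D([f],[-1])$ means $[f]\in[f]^3+[f]^2[-1]=[f]+[-1]$. Axiom (ii) of Definition \ref{defn:mrrealreduced} does not give it.

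\textbf{The gap.} You never establish the step $[u]\in[-1]+[f]$, which is where the content of the theorem lies. Your certificate $v=t=f^2+g$ leaves $s=g^2$, and the failure is structural. Off $Z(f)$ one has $u=f$ and $g=0$, so $s=ft-uv=f(t-v)$ there. Hence any choice with $t=v$ makes $s$ vanish on the non-empty open set $X\setminus Z(f)$. Your proposed repairs are not carried out (``a suitable square $w$''). Replacing $g$ by $g+\varepsilon f^2$ would also break the requirement that $u$ agree with $f$ off $Z(f)$.

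\textbf{The missing idea.} Take $v,t$ to be strictly positive squares whose difference is a positive multiple of $u$. Put $\rho=1+u^2$, so that $\rho\pm u=u^2\pm u+1>0$, and set $v=(\rho-u)^2$, $t=(\rho+u)^2$. Substituting $f=u+g$,
\[
s:=ft-uv=4\rho u^2+g(\rho+u)^2\ge 0 .
\]
Here $s(x)=0$ forces $u(x)=g(x)=0$, so $Z(s)=\partial Z(f)$ has empty interior. Thus $s\in q(A)$ and $uv=-s+ft$. This is the paper's certificate.

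\textbf{Your fallback.} It can also be completed, but not by appealing to ``signs near the points underlying $h$'', because characters of $A/_mq(A)$ need not be point evaluations. Your sign claim is also backwards: $u=-g<0$ on $\mbox{int}Z(f)$, not $u>0$. That negativity is exactly why $u=f-g$ is the right choice.

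The rigorous version is purely algebraic. Fix $h\in X_G$ with $G=A/_mq(A)$. Then:
\begin{itemize}
\item $[u]\in[f]+[-g]$ (all weights $1$) gives $h([u])\in D^t_{\bm 3}(h([f]),-h([g]))$;
\item $fg=0$ gives $h([f])h([g])=0$;
\item $g=(\sqrt g)^2$ gives $h([g])\in\{0,1\}$;
\item $[u]$ is a unit, so $h([u])=\pm1$.
\end{itemize}
If $h([f])\neq0$, then $h([g])=0$ and $D^t_{\bm 3}(h([f]),0)=\{h([f])\}$, so $h([u])=h([f])$. If $h([f])=0$, then $h([u])\in D^t_{\bm 3}(0,-h([g]))$ together with $h([u])\neq0$ forces $h([g])=1$ and $h([u])=-1$. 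In every case $h([u])\in D^t_{\bm 3}(-1,h([f]))$, and Theorem \ref{teo:septeo}(ii) yields $[u]\in[-1]+[f]$.

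Either completion closes the gap; as written, the proposal stops short of it.
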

\begin{proof}
    Since $X$ is a $T_6$ spaces, the closed set $F = X \setminus \mbox{int}(Z(f))$ is the zero-set of some non-zero divisor continuous function $g \in A$ (Proposition \ref{t6-carac}). Let $u := f - g$ (the non-negative continuous extension of $f$). Since $g = 0$ outside $\mbox{int}(Z(f))$, we have $u|_{X \setminus Z(f)} = f|_{X \setminus Z(f)}$. As shown previously, $\mbox{int}(Z(u)) = \emptyset$, thus $u \in \mbox{nzd}(A)$, which implies $u^2 \in q(A)$. We only need to prove that $[f] = [u][f^2]$ and $[u] \in [-1] + [f]$.
    \begin{itemize}
        \item \textbf{$[f] = [u][f^2]$:} We evaluate the functions $f u^2$ and $u f^2$ globally on $X$:
    \begin{itemize}
        \item For $x \in X \setminus Z(f)$, since $u(x) = f(x)$, we have $f(x)u^2(x) = f^3(x)$ and $u(x)f^2(x) = f^3(x)$.
        \item For $x \in Z(f)$, since $f(x) = 0$, both $f(x)u^2(x)$ and $u(x)f^2(x)$ are identically $0$.
    \end{itemize}
    Thus, $fu^2 = u f^2$ in $A$, which means exactly that $f \sim_m u f^2$, providing $[f] = [u f^2] = [u][f^2]$.

        \item \textbf{$[u] \in [-1] + [f]$:} By the definition of the hyperoperation in the Marshall quotient, we must find $v, t, s \in q(A)$ such that $u v = (-1) s + f t$, which is equivalent to $s = f t - u v$.
    
    Let $\rho := u^2 + 1$. We have that $\rho$ is a positive function such that $\rho(x) - u(x) = u(x)^2 - u(x) + 1 > 0$ and $\rho(x) + u(x) = u(x)^2 + u(x) + 1 > 0$ for all $x\in X$. Define $v := (\rho - u)^2$ and $t := (\rho + u)^2$. Since $v$ and $t$ are positive functions, their zero-sets are empty, meaning $v, t \in \mbox{nzd}(A)$. Since they are squares, $v, t \in q(A)$. Now, let $s:=ft-uv$. Note that:
    \begin{align*}
        s &= f t - u v= f(\rho+u)^2 - u(\rho-u)^2.
    \end{align*}
    Substituting $f = u + g$ we obtain
    \begin{align*}
        s &= (u+g)(\rho+u)^2 - u(\rho-u)^2 \\
          &= u[(\rho+u)^2 - (\rho-u)^2] + g(\rho+u)^2 \\
          &= u[4\rho u] + g(\rho+u)^2 \\
          &= 4\rho u^2 + g(\rho+u)^2.
    \end{align*}
    Since $\rho > 0$, $u^2 \ge 0$, $g \ge 0$, and $(\rho+u)^2 \ge 0$, it follows that $s(x) \ge 0$ for all $x \in X$. In the ring $\mathcal{C}(X, \mathbb{R})$, every non-negative function is a square (the square root is continuous and we have $s(x)=(\sqrt{s(x)})^2$), hence $s \in A^2$. 
    
    To ensure $s \in q(A)$, we must check its zero-set. Since $s$ is a sum of two non-negative terms, $s(x) = 0$ implies both $4\rho(x)u^2(x) = 0$ and $g(x)(\rho(x)+u(x))^2 = 0$. Since $\rho(x)$ and $(\rho(x)+u(x))^2$ are strictly positive, this requires $u(x) = 0$ and $g(x) = 0$, which consequently means $f(x) = u(x) + g(x) = 0$. Thus, $Z(s) = Z(u) \cap Z(f)$. Since $\mbox{int}(Z(u)) = \emptyset$, we conclude that $\mbox{int}(Z(s)) = \emptyset$. 
    
    Therefore, $s \in \mbox{nzd}(A) \cap A^2 = q(A)$. With $v, t, s \in q(A)$ fulfilling the relation $u v = -s + f t$, we conclude that $[u] \in [-1] + [f]$ in the multiring $A/_m q(A)$, completing the proof that $A/_m q(A)$ is a Boolean real semigroup.
    \end{itemize}
\end{proof}

In the sequel, we discuss how to obtain a \L{}ojasiewicz inequality type for $A/_m q(A)$. In classical real analytic geometry, the \L{}ojasiewicz inequality provides an upper bound for the distance of a point to the nearest zero of a given real analytic function. For a more detailed discussion concerning origins and applications of this inequality, the reader is kindly addressed to consult \cite{colding2014lojasiewicz}, or  \cite{haraux2012some}.

M. Marshall proved that the homological consequences of this inequality---namely, the ability to continuously interpolate functions that have compatible behaviors on their zero-sets---can be captured without any metric data. More precisely, we have the following results:

\begin{theorem}[Hörmander-\ Lojasiewicz inequality, Theorem 6.8.1 in \cite{marshall1996spaces}]\label{HL-01}
Let $(X,G)$ be an abstract real spectra and let $Y\subseteq X$ be a closed subset. Suppose that $ f, g \in G $ with $ g = 0 $ on $ Y \cap Z(f) $. Then there exists $ f_1 \in D^t \langle f, g \rangle $ such that $ f_1 = f $ on $ Y $.
\end{theorem}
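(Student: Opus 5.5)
The plan is to work pointwise on $X$ via the Separation Theorem \ref{teo:septeo}, regarding elements of $G$ as functions $X\to\bm 3$. In $\bm 3$, $h\in D^t\langle f,g\rangle$ means that at every point $x$ one of the following holds:
\begin{itemize}
\item $h(x)=0$ and $f(x)=-g(x)$;
\item $h(x)=f(x)\neq 0$;
\item $h(x)=g(x)\neq 0$.
\end{itemize}
The naive candidate $f_1=f$ works at every point except those of
$$K:=Z(f)\setminus Z(g),$$
and by hypothesis $K\cap Y=\emptyset$. So the task is to perturb $f$ only off $Y$, in such a way that at the points of $K$ it takes the value of $g$.

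The perturbation I will use is $f_1\in D^t\langle f, a^2g\rangle$ for a suitable $a\in G$; such an $f_1$ exists because $D^t$ is never empty in an abstract real spectrum (the sum in the associated multiring is non-empty, Theorem \ref{teo:RS+RRM}). I will require two conditions on $a$:
\begin{itemize}
\item[(1)] $a=0$ on $Y$;
\item[(2)] $Z(a)\cap Z(f)\subseteq Z(g)$, equivalently $Z(a)\cap K=\emptyset$.
\end{itemize}

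Granting such an $a$, I check the conclusion pointwise. On $Y$ we have $a^2g=0$, so $f_1(x)\in D^t_{\bm 3}(f(x),0)=\{f(x)\}$, which gives $f_1=f$ on $Y$.

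For $f_1\in D^t\langle f,g\rangle$, I split according to $a(x)$. Where $a(x)\neq0$, we have $a^2g(x)=g(x)$, so the condition at $x$ is literally the one for $D^t\langle f,g\rangle$. Where $a(x)=0$, $f_1(x)=f(x)$. If $f(x)\neq0$ this is admissible. If $f(x)=0$, then by (2) also $g(x)=0$, so $f_1(x)=0=-g(x)=f(x)$ is admissible. By the Separation Theorem this gives $f_1\in D^t\langle f,g\rangle$.

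The main obstacle is producing $a$ with (1) and (2), i.e. an element whose zero set contains the closed set $Y$ and misses $K$. My approach is compactness in the constructible topology. $K$ is constructible, hence compact. $Y$ is closed, hence an intersection of basic closed sets, each a finite Boolean combination of sets of the form $Z(b)$ and $\{b\ge 0\}$. Since $Y\cap K=\emptyset$, compactness gives finitely many such basic sets whose intersection $C$ already satisfies $Y\subseteq C$ and $C\cap K=\emptyset$.

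It remains to realize $C$, or a set squeezed between $Y$ and $X\setminus K$, as a zero set $Z(a)$ using only the semigroup operations and $D^t$. For this I will use three facts:
\begin{itemize}
\item $Z(bc)=Z(b)\cup Z(c)$, so finite unions of zero sets are zero sets;
\item $Z(e)=Z(b)\cap Z(c)$ for any $e\in D^t\langle b^2,c^2\rangle$, so finite intersections are zero sets;
\item sets of the form $\{b\ge0\}$ can be handled by choosing $e\in D^t\langle b,-b^2\rangle$, which vanishes exactly where $b\le 0$, and then working with $-b$ as needed.
\end{itemize}
Because $Y$ is closed in the spectral topology, and hence stable under specialization, I expect only the \emph{closed} constructible pieces to be needed, and these are exactly the ones expressible as zero sets by the three facts above. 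Verifying this reduction carefully is where I expect most of the work to lie.
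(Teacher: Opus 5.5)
Your pointwise analysis is correct: the pointwise condition for $f\in D^t\langle f,g\rangle$ fails exactly on $K=Z(f)\setminus Z(g)$. Moreover, \emph{if} some $a\in G$ satisfies (1) and (2), then any $f_1\in D^t\langle f,a^2g\rangle$ works. The compactness reduction is also fine. The gap is that such an $a$ does not exist in general, so the step you postpone is false rather than merely technical: zero sets of elements of $G$ are far too coarse to separate a closed set from a constructible one.

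Take the abstract real spectrum of $\mathbb R[t]$, that is, $X=\mbox{Sper}\,\mathbb R[t]$ with $G$ the sign functions of polynomials. Put $Y=\{\bar t\neq 1\}$, which is closed as the complement of the basic open set $\{\bar t=1\}$, and take $f=\overline{t-1}$, $g=\bar 1$. Then $Z(f)$ is the single point $t=1$, which lies outside $Y$, so the hypothesis holds and $K=\{t=1\}$. A nonzero polynomial has finitely many real roots, while $Y$ contains every point $t=r$ with $r\le 0$. Hence the only $a\in G$ vanishing on $Y$ is $a=0$, which gives $Z(a)\cap K\neq\emptyset$. Your recipe then degenerates to $f_1=f$, which is not in $D^t\langle f,g\rangle$ at $t=1$.

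The theorem nevertheless holds here, with $f_1=\overline{t-1/2}$. This $f_1$ agrees with $f$ on $Y$ not because $g$ has been killed there, but because $g$ is outweighed by $f$. Accordingly, your third fact is wrong. For $e\in D^t\langle b,-b^2\rangle$ one gets $e=-1$ where $b=-1$, and $e$ is unconstrained where $b=1$, since $D^t_{\bm 3}(1,-1)=\bm 3$. Closed constructible sets are not zero sets in general; the set $\{\bar t\neq 1\}$ above is not one. Weakening (1) to what your check actually uses, namely $a=0$ only on $Y\cap\{fg=-1\}$, does not help either, because in this example $fg=-1$ on all of $Y$.

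The missing idea is that the multiplier must be allowed to take the value $-1$. Redo your pointwise check with a general $h\in G$ in place of $a^2$. It shows that any $f_1\in D^t\langle f,gh\rangle$ works as soon as $h=1$ on $K$, $h\neq -1$ on $\{fg=1\}$, and $h\neq 1$ on $Y\cap\{fg=-1\}$; these are exactly the pointwise requirements. Conversely, for any solution $f_1$, the element $h=f_1g$ has these three properties. So the theorem amounts to producing an element with prescribed \emph{signs}: nonpositive where $g=-f$ on $Y$, and positive on $K$. This is precisely the \L{}ojasiewicz content, and your outline never supplies it.

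In the ring case, for $Y$ basic closed and defined by a finitely generated preordering $T$, this content comes from the Positivstellensatz. One gets $p+g^{2m}=cf$ with $p\in T$, and then $(1+c^2)(1+g^2)f+g^{2m+1}$ has the sign of $f$ on $Y$, and its sign function lies in $D^t\langle \bar f,\bar g\rangle$. In an abstract real spectrum this step has to be extracted from the axioms on $D^t$. That is the substance of Marshall's Theorem 6.8.1, which the paper quotes without proof, so there is no argument in the paper for your outline to match. Your reduction to a closed constructible $C\supseteq Y$ with $C\cap K=\emptyset$ is a reasonable first step, but what must then be realized on $C$ is a sign condition, not a zero set.
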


\begin{theorem}[Corollary 6.8.2 of \cite{marshall1996spaces}]\label{HL-02}
Let $(X,G)$ be an abstract real spectra and let $Y\subseteq X$ be a closed subset. Suppose that $f\in G$ with $f \geq 0$ on $Y \cap Z(\mathfrak a)\cap U(S^2)$ for some subsets $\mathfrak a, S\subseteq G$. Then there exists $f_1 \in G$ with $f_1 \geq 0$ on $Y$ and $f_1 = f$ on $Z(\mathfrak a) \cap U(S^2)$.
\end{theorem}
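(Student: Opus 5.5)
The plan is to deduce the statement from Theorem \ref{HL-01} in three moves: reduce the infinite data $\mathfrak a$, $S$ to finitely many elements by compactness, absorb $S$ into $f$, and then apply Theorem \ref{HL-01} on a suitable closed subset of $Y$. Throughout, $Z(\mathfrak a)=\{x\in X: a(x)=0 \mbox{ for all } a\in\mathfrak a\}$, $U(S^2)=\{x\in X: s(x)\ne0\mbox{ for all } s\in S\}$, and $\{g\le 0\}$, $\{g\ge0\}$ are closed in the spectral topology.

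\textbf{Step 1: finite reduction.} The set $Y\cap\{f<0\}\cap Z(\mathfrak a)\cap U(S^2)$ is empty by hypothesis. All the sets involved are closed in the constructible (patch) topology, and $X$ is patch-compact. Hence there are finitely many $a_1,\dots,a_n\in\mathfrak a$ and $s_1,\dots,s_m\in S$ with
$$Y\cap\{f<0\}\cap Z(a_1,\dots,a_n)\cap U(s_1^2,\dots,s_m^2)=\emptyset.$$
Put $s:=s_1\cdots s_m$, so that $U(s^2)=\bigcap_j U(s_j^2)$. Using the axioms of an abstract real spectrum, pick $a\in D^t\langle a_1^2,\dots,a_n^2\rangle$ that is $\ge0$ with $Z(a)=\bigcap_i Z(a_i)$ (a ``sum of squares''). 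Once the final $f_1$ is built from $a$ and $s$, it will agree with $f$ on $Z(a)\cap U(s^2)$, which contains $Z(\mathfrak a)\cap U(S^2)$. So this is harmless.

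\textbf{Step 2: absorbing $S$.} Set $F:=fs^2$. On $U(s^2)$ we have $s^2=1$, so $F=f$ there. Everywhere else $F=0$. The hypothesis becomes $F\ge0$ on $Y\cap Z(a)$. It therefore suffices to find $F_1$ with $F_1\ge0$ on $Y$ and $F_1=F$ on $Z(a)$.

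\textbf{Step 3: applying Theorem \ref{HL-01}.} Let $Y':=Y\cap\{F\le0\}$, which is closed. On $Y'\cap Z(a)$ we have both $F\ge0$ and $F\le0$, so $F=0$ there. Theorem \ref{HL-01}, applied to the pair $(a,-F)$ on $Y'$, gives $h\in D^t\langle a,-F\rangle$ with $h=a$ on $Y'$. Then I would take $F_1\in D^t\langle F,h\rangle$, or, if that fails, $F_1:=F+h$ formed via a second application of Theorem \ref{HL-01}. The check is pointwise, using the description of $D^t$ in $\bm 3$: at each point either $F_1=0$ with $F=-h$, or $F_1$ equals one of the two summands.
\begin{itemize}
\item On $Z(a)$ the transversal representation forces $h=-F$ or $h=0$, and in either case $F_1=F$ there.
\item On $Y\cap\{F<0\}$ we have $h=a>0$, and we want $F_1=h\ge0$.
\end{itemize}

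\textbf{Main obstacle.} The delicate point is Step 3: choosing the auxiliary pair and closed set so that the pointwise case analysis of $D^t$ in $\bm 3$ really yields both $F_1=F$ on $Z(a)$ and $F_1\ge0$ on all of $Y$. The three-valued ambiguity of $D^t$ is exactly what must be controlled. If $D^t\langle F,h\rangle$ is too loose, I would first try replacing $h$ by $h^2F$-type products, and apply Theorem \ref{HL-01} a second time on $Y$ to pin $F_1$ down to $F$ wherever $h$ vanishes.
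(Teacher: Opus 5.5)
\textbf{There is a genuine gap in Step 3.} The paper gives no proof of this statement; it only quotes Marshall, so I am judging your argument on its own.

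Steps 1 and 2 are sound. Patch-compactness lets you pass to finitely many $a_i\in\mathfrak a$ and $s_j\in S$. The element $a\in D^t\langle a_1^2,\dots,a_n^2\rangle$ is $\ge 0$ with $Z(a)=\bigcap_i Z(a_i)$, and $U(s^2)=\bigcap_j U(s_j^2)$ for $s=s_1\cdots s_m$. Replacing $f$ by $F=fs^2$ then correctly reduces the problem to: given $a\ge 0$ and $F\ge 0$ on $Y\cap Z(a)$, find $F_1\ge 0$ on $Y$ with $F_1=F$ on $Z(a)$.

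Step 3, however, does not produce such an $F_1$, because your candidate fails pointwise in $\bm 3$.
\begin{itemize}
\item On $Z(a)$ you have $a=0$, so $h\in D^t\langle a,-F\rangle$ forces $h=-F$ exactly (not ``$h=-F$ or $h=0$''). Then $F_1\in D^t\langle F,h\rangle$ only gives $F_1(x)\in F(x)+(-F(x))$, which is all of $\bm 3$ whenever $F(x)\neq 0$. So your claim that ``in either case $F_1=F$ there'' is false.
\item On $Y\cap\{F<0\}$ you have $F=-1$ and $h=a=1$. Hence $F_1\in(-1)+1=\bm 3$ is unconstrained, and $F_1\ge 0$ is not forced.
\end{itemize}
The fallbacks you mention (a second, unspecified application of Theorem~\ref{HL-01}, or ``$h^2F$-type products'') are never carried out. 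As written, no $f_1$ is constructed.

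\textbf{The fix is a sign change, and it makes the second step unnecessary.} Apply Theorem~\ref{HL-01} to the pair $(a,F)$ instead of $(a,-F)$, on the same closed set $Y'=Y\cap\{F\le 0\}$. The required hypothesis, $F=0$ on $Y'\cap Z(a)$, is exactly what you already verified. This gives $h\in D^t\langle a,F\rangle$ with $h=a$ on $Y'$, and $F_1:=h$ already works. Pointwise:
\begin{itemize}
\item on $Z(a)$, $h(x)\in 0+F(x)=\{F(x)\}$, so $h=F$;
\item on $Y'$, $h=a\ge 0$;
\item on $Y\setminus Y'$, $F=1$ and $h(x)\in a(x)+1=\{1\}$, because $a\ge 0$.
\end{itemize}
Combined with your Steps 1--2, $f_1:=h$ satisfies $f_1\ge 0$ on $Y$ and $f_1=fs^2=f$ on $Z(a)\cap U(s^2)$. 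This set contains $Z(\mathfrak a)\cap U(S^2)$, which is what the statement requires.
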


Through the categorical duality between Abstract Real Spectra (ARS) and Real Reduced Multirings ($\mathcal{MR}_{red}$), we can translate Marshall's abstract \L{}ojasiewicz Theorems into our setting. In our dictionary, the space of signs corresponds to the real spectrum $\mbox{Sper}(A)$, and the transversal representation set $D^t(f,g)$ corresponds to the multivalued sum $f+g$ in the multiring $A$.

\begin{theorem}[Adapted from \ref{HL-01}]\label{HL-03}
Let $A$ be a real reduced multiring and $Y \subseteq \mbox{Sper}(A)$ be a closed set in the spectral topology. Let $f, g \in A$. If for every ordering $\sigma \in Y$ such that $\sigma(f) = 0$ we also have $\sigma(g) = 0$, then there exists an element $f_1 \in f + g$ such that $\sigma(f_1) = \sigma(f)$ for all $\sigma \in Y$.
\end{theorem}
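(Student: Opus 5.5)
The plan is to obtain Theorem \ref{HL-03} directly from Theorem \ref{HL-01} by transporting everything along the known equivalences: real reduced multirings $\leftrightarrow$ real semigroups (Theorem \ref{teo:RS+RRM}) $\leftrightarrow$ abstract real spectra. First, given the real reduced multiring $A$, I regard it as the real semigroup $(A,\cdot,1,0,-1,D)$ of Theorem \ref{teo:RS+RRM}(b). The transversal relation of that real semigroup recovers the multivalued sum, i.e.\ $D^t(f,g)=f+g$, because part (a) applied to this real semigroup gives back the original multiring. By the Dickmann--Petrovich duality, the pair $(X_A,A)$ with $X_A=\mbox{Hom}(A,\bm 3)$ (which plays the role of $\mbox{Sper}(A)$ in the dictionary above) is an abstract real spectrum. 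Each $a\in A$ is read as the function $\sigma\mapsto\sigma(a)\in\bm 3$. By the Separation Theorem \ref{teo:septeo}, this representation is injective and the relations $D^t$ are computed pointwise, so that $f_1\in D^t\langle f,g\rangle$ in the ARS sense is the same as $f_1\in D^t(f,g)=f+g$.

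Second, I translate the hypotheses. The spectral topology on $X_A$ is generated by the sets $\{\sigma:\sigma(a)=1\}$, and this is precisely the topology used for the abstract real spectrum in Marshall's setting. Hence $Y$ is a closed subset of the space of signs in the sense required by Theorem \ref{HL-01}. The hypothesis ``$\sigma(f)=0\Rightarrow\sigma(g)=0$ for $\sigma\in Y$'' says exactly that $g=0$ on $Y\cap Z(f)$, where $Z(f)=\{\sigma:\sigma(f)=0\}$.

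Third, Theorem \ref{HL-01} supplies $f_1\in G=A$ with $f_1\in D^t\langle f,g\rangle$ and $f_1=f$ on $Y$. By the first step this means $f_1\in f+g$ and $\sigma(f_1)=\sigma(f)$ for all $\sigma\in Y$, which is the claim.

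The main obstacle is the bookkeeping of the first step: one must check that the ARS attached to $A$ via real semigroups has the same underlying set of signs and the same $D^t$ as Marshall's abstract real spectrum. In particular, Marshall's $D^t\langle f,g\rangle$ is defined pointwise on $X$ by the $\bm 3$-valued transversal rule, and this has to agree with the multiring sum. I would handle this by citing the isomorphism of categories between real semigroups and ARS from \cite{dickmann2004real}, together with Theorem \ref{teo:septeo}(ii), which states that membership in $D^t_G$ is tested pointwise against all $h\in X_G$ using $D^t_{\bm 3}$.
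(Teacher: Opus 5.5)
Your proposal is correct and is essentially the paper's own argument: the paper gives no separate proof of Theorem \ref{HL-03}. It justifies it only by the categorical duality between real reduced multirings, real semigroups and abstract real spectra, under which $\mbox{Sper}(A)$ becomes the space of signs and $D^t(f,g)$ becomes $f+g$, so that Theorem \ref{HL-01} applies directly. Your version makes this dictionary explicit through Theorem \ref{teo:RS+RRM} and the Separation Theorem \ref{teo:septeo}, which is exactly the bookkeeping the paper leaves implicit.
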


To formulate Theorem \ref{HL-02} in terms of multirings, we introduce the hyperalgebraic analogues of zero-sets and non-vanishing sets in the real spectrum.

\begin{definition}
Let $A$ be a real reduced multiring and consider subsets $\mathfrak{a}, S \subseteq A$. We define the \textbf{spectral zero-set} of $\mathfrak{a}$ and \textbf{the strict positivity set} of $S^2$ by:
\begin{align*}
    Z(\mathfrak{a}) &= \{\sigma \in \mbox{Sper}(A) \mid \sigma(x) = 0 \mbox{ for all } x \in \mathfrak{a}\}, \\
    U(S^2) &= \{\sigma \in \mbox{Sper}(A) \mid \sigma(s) \neq 0 \mbox{ for all } s \in S\}.
\end{align*}
\end{definition}

\begin{theorem}[Adapted from \ref{HL-02}]\label{HL-04}
Let $A$ be a real reduced multiring, $Y \subseteq \mbox{Sper}(A)$ be a closed subset, and $\mathfrak{a}, S \subseteq A$. If $f \in A$ satisfies $\sigma(f) \ge 0$ for all $\sigma \in Y \cap Z(\mathfrak{a}) \cap U(S^2)$, then there exists an element $f_1 \in A$ such that $\sigma(f_1) \ge 0$ for all $\sigma \in Y$, and $\sigma(f_1) = \sigma(f)$ for all $\sigma \in Z(\mathfrak{a}) \cap U(S^2)$.
\end{theorem}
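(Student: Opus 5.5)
Theorem \ref{HL-04} is a translation of Theorem \ref{HL-02} along the equivalence between abstract real spectra and real reduced multirings, so I would transport hypotheses and conclusion across that dictionary rather than argue inside $A$.

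First, I would pass from $A$ to its real semigroup $(A,\cdot,1,0,-1,D)$ via Theorem \ref{teo:RS+RRM}(b), with $d\in D(a,b)$ iff $d\in d^2a+d^2b$. By part (a) of the same theorem, the transversal representation $D^t(a,b)$ is exactly the multivalued sum $a+b$. The real semigroup is dual to the abstract real spectrum $(X_A,G_A)$, where $X_A=\mathrm{Hom}(A,\bm 3)$ and $G_A$ is the image of $A$ in $\bm 3^{X_A}$ under evaluation $a\mapsto \hat a$, $\hat a(h)=h(a)$. By the Separation Theorem \ref{teo:septeo}(iii), evaluation is injective, so $A\cong G_A$ as ternary semigroups. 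Items (i) and (ii) of the same theorem show that $D$ and $D^t$ on $A$ coincide with the pointwise relations inherited from $\bm 3$. Thus $(X_A,G_A)$ is an abstract real spectrum, and identifying $\mathrm{Sper}(A)$ with $X_A$, each ``$\sigma(x)$'' is simply $h(x)\in\bm 3$.

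Second, I would check that the data match. The spectral topology on $\mathrm{Sper}(A)$ is the one generated by the sets $\{h:h(a)=1\}$, which is the topology of the abstract real spectrum, so $Y$ is closed in Marshall's sense. The set $Z(\mathfrak a)$ defined above equals Marshall's $Z(\hat{\mathfrak a})$. For $U(S^2)$, note that $h(s)\neq 0$ iff $h(s^2)=1$, i.e.\ $\hat s^2>0$ at $h$, so $U(S^2)$ coincides with Marshall's strict positivity set of $\{\hat s^2: s\in S\}$. The hypothesis ``$\sigma(f)\ge 0$'' means $h(f)\in\{0,1\}$, which is the pointwise order used in Theorem \ref{HL-02}.

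Third, I would apply Theorem \ref{HL-02} to $\hat f\in G_A$. This yields $g\in G_A$ with $g\ge 0$ on $Y$ and $g=\hat f$ on $Z(\mathfrak a)\cap U(S^2)$. Since $G_A=\{\hat a:a\in A\}$, we have $g=\hat{f_1}$ for a unique $f_1\in A$, and this $f_1$ satisfies both required conclusions.

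The main obstacle is the first step: one must confirm that the real semigroup attached to $A$ really satisfies Marshall's ARS axioms, not merely that the structures are related. This is exactly the content of the Dickmann–Petrovich equivalence between real semigroups and abstract real spectra combined with Theorem \ref{teo:RS+RRM}. I would cite that equivalence and verify only that it preserves the topology and the sign-based notions $Z$, $U$ and $\ge 0$, which the second step does pointwise.
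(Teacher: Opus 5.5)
Your proposal follows essentially the same route as the paper, which justifies Theorem \ref{HL-04} only by transporting Marshall's Corollary 6.8.2 across the duality between abstract real spectra, real semigroups and real reduced multirings, with $\mbox{Sper}(A)$ identified with the space of signs. You carry out that transport in more detail, checking pointwise that closedness of $Y$, $Z(\mathfrak{a})$, $U(S^2)$ and the condition $\sigma(f)\ge 0$ all match; the identification $D^t(a,b)=a+b$ you invoke is harmless but unused here, since this statement involves no sums (it matters for Theorem \ref{HL-03}).
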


 As an application, we discuss what the combination of these results implies for the standard topology of the real line. Therefore, we will analyze these \L{}ojasiewicz-type inequalities into the concrete ring $A = \mathcal{C}(\mathbb{R}, \mathbb{R})$. Every point $x \in \mathbb{R}$ naturally induces an ordering $\sigma_x \in \mbox{Sper}(A)$ via the sign evaluation $\sigma_x(f) = \mbox{sgn}(f(x))$. Under this dictionary, we have the following imediate consequence of Theorem \ref{HL-03}:

\begin{theorem}[\L{}ojasiewicz Interpolation on $\mathbb{R}$]\label{HL-05}
Let $F \subseteq \mathbb{R}$ be a closed set, $A:=\mathcal{C}(\mathbb{R}, \mathbb{R})$, and let $f, g \in A$. Suppose that every root of $f$ inside $F$ is also a root of $g$ (i.e., $Z(f) \cap F \subseteq Z(g) \cap F$). Then, $[f] + [g]$ (in $A/_m q(A)$) contains an element $[f_1]$ such that $\mbox{sgn}(f_1(x)) = \mbox{sgn}(f(x))$ for all $x \in F$. 
\end{theorem}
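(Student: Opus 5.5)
The plan is to deduce Theorem \ref{HL-05} from Theorem \ref{HL-03} applied to the real reduced multiring $B:=A/_m q(A)$. Theorem \ref{str-01} and Example 5.15 of \cite{roberto2021quadratic} guarantee that $B$ is real reduced. Alternatively, the construction can be made directly in $A$ and then read off in $B$. I would actually do both: use the direct construction as the real proof, and keep the abstract translation as motivation, because the dictionary between points of $F$ and orderings of $B$ needs some care.

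First, for each $x\in\mathbb R$ we check that $\sigma_x([h]):=\mbox{sgn}(h(x))$ is well defined on $B$. This fails exactly at points where some $s\in q(A)$ vanishes, since $hs=h't$ with $s,t\in q(A)$ only gives $\mbox{sgn}(h(x))=\mbox{sgn}(h'(x))$ off $Z(s)\cup Z(t)$. So $\sigma_x$ is only partially defined, and this is the main obstacle to a purely abstract translation. I would therefore avoid relying on $\sigma_x$ being a genuine morphism.

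Second, the concrete construction. Set $F':=F\cap Z(f)$, closed, with $g=0$ on $F'$. I would look for $f_1=fs+gt$ with $s,t\in q(A)$ and take $v=1$, so that $[f_1]\in[f]+[g]$ by Definition \ref{defn:strangeloc}. The choice is $s=1$ and $t=h^2$, where $h$ is continuous, $Z(h)$ has empty interior, and $|g|h^2<|f|$ on $F\setminus Z(f)$. With this choice, on $F\setminus Z(f)$ we get $\mbox{sgn}(f+gh^2)=\mbox{sgn}(f)$. On $F\cap Z(f)$ we have $g=0$, so $f_1=0=f$ there.

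To construct $h$, take $h^2=\phi:=\min\bigl(1,\tfrac{|f|}{2(|g|+1)}\bigr)$ on $\mathbb R$. This is continuous, non-negative, and a square of a continuous function. It satisfies $|g|\phi\le |f|/2<|f|$ where $f\ne0$, and $Z(\phi)=Z(f)$. The remaining problem is the case where $f$ is a zero-divisor, so that $\phi\notin q(A)$. Then I would replace $|f|$ by $|u|$ for a non-zero divisor extension $u$ of $f$ from Proposition \ref{nz-extension}. Since $u=f$ off $Z(f)$, the inequality on $F\setminus Z(f)$ is unchanged. Also $Z(\phi)=Z(u)$ has empty interior, so $\phi\in A^2\cap\mbox{nzd}(A)=q(A)$.

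Finally, conclude that $f_1=f+g\phi$ satisfies $f_1\cdot1=f\cdot1+g\cdot\phi$. This gives $[f_1]\in[f]+[g]$ and $\mbox{sgn}(f_1(x))=\mbox{sgn}(f(x))$ for every $x\in F$, as required. The only delicate step is making $t$ a non-zero divisor, which is handled by Proposition \ref{nz-extension}. If $\phi$ were to leave $q(A)$ for another reason, the fallback would be $t=\phi+\psi$, with $\psi\ge0$ vanishing exactly on $F$ by Proposition \ref{t6-carac}. Then $Z(t)\subseteq Z(\phi)\cap F$, which still has empty interior.
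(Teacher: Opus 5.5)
Your proof is correct, but it follows a different route from the paper's. The paper gives no argument beyond calling the statement an immediate consequence of Theorem \ref{HL-03} applied to the real reduced multiring $A/_m q(A)$, with orderings identified with point evaluations $\sigma_x(f)=\mbox{sgn}(f(x))$. You instead exhibit an explicit certificate $f_1\cdot 1=f\cdot 1+g\cdot\phi$, where $\phi=\min\bigl(1,\tfrac{|u|}{2(|g|+1)}\bigr)$. Three facts do the work:
\begin{itemize}
\item $\phi\in q(A)$, because $\phi\ge 0$ has a continuous square root and $Z(\phi)=Z(u)$ has empty interior.
\item The estimate $|g|\phi\le |f|/2<|f|$ on $F\setminus Z(f)$ preserves the sign of $f$ there.
\item The hypothesis gives $f_1=f=0$ on $F\cap Z(f)$.
\end{itemize}

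The abstract route buys generality, since the same statement holds in every real reduced multiring. However, your caution about the dictionary is well founded, and the problem is worse than ``partially defined.'' Since $(y-x)^2\in q(A)$, one has $[(y-x)^2]=[1]$ for every $x\in\mathbb{R}$. Hence $[h]\mapsto \mbox{sgn}(h(x))$ is not well defined on $A/_m q(A)$ for any $x$, and points of $F$ do not give orderings of the quotient. The paper's one-line derivation therefore needs a bridge it does not supply. Your elementary construction avoids this entirely, and it also shows that closedness of $F$ plays no role.

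The fallback $t=\phi+\psi$ in your last sentence is unnecessary and should be dropped. The only way $\phi$ could leave $q(A)$ is through its zero set, which you have already controlled. Its justification is also not right as stated: $Z(\phi)\cap F$ need not have empty interior when $Z(\phi)$ does not.
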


In other words, the function $f_1$ obtained in the previous Theorem perfectly preserves the sign profile of $f$ over the entire set $F$, justifying that the multivalued addition in the quotient algebra acts as an exact sign-preserving interpolator, controlling behavior of $g$ without requiring analytical bounds on derivatives.

Also, we have the following imediate consequence of Theorem \ref{HL-04}:

\begin{theorem}[Continuous Positive Extension over Boolean Domains in $\mathbb{R}$]\label{HL-06}
Let $F \subseteq \mathbb{R}$ be a closed subset. Let $Z \subseteq \mathbb{R}$ be another closed subset and $U \subseteq \mathbb{R}$ be an open subset. Suppose $f \in \mathcal{C}(\mathbb{R}, \mathbb{R})$ is non-negative on $F \cap Z \cap U$. Then, there exists a global continuous function $f_1 \in \mathcal{C}(\mathbb{R}, \mathbb{R})$ such that $f_1(x) \ge 0$ for all $x \in F$, and $\mbox{sgn}(f_1(x)) = \mbox{sgn}(f(x))$ for all $x \in Z \cap U$.
\end{theorem}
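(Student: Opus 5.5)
One natural route is to derive Theorem \ref{HL-06} from Theorem \ref{HL-04}, applied to the real reduced multiring $A/_m q(A)$ with $A=\mathcal C(\mathbb R,\mathbb R)$ (real reduced by Example 5.15 of \cite{roberto2021quadratic}; see Theorem \ref{str-01}). It turns out to be cleaner to bypass the abstract machinery and give a direct construction in $A$. The abstract route needs a dictionary between the point-evaluation orderings $\sigma_x$ and the spectral sets $Y$, $Z(\mathfrak a)$, $U(S^2)$. It also needs a way to pass back from a class $[f_1]$ to a representative with the pointwise sign conditions. Both are delicate, because $\mathrm{Sper}$ of the quotient contains more than point evaluations, and representatives differ by factors in $q(A)$ that may vanish on nowhere dense sets. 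I therefore treat the spectral statement as motivation and prove the pointwise statement directly.

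First I would encode the data. By Proposition \ref{t6-carac}, choose $h\ge 0$ with $Z(h)=Z$ (so $\mathfrak a=\{h\}$) and $s\ge0$ with $Z(s)=\mathbb R\setminus U$ (so $U(S^2)$ corresponds to $\{s\ne0\}=U$). The hypothesis is then that $f\ge0$ on $F\cap Z\cap U$. The conclusion asks for $f_1$ with $f_1\ge0$ on $F$ and $\mathrm{sgn} f_1=\mathrm{sgn} f$ on $Z\cap U$.

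Next comes the construction. On $Z\cap U$ we must keep the sign of $f$, so $f_1$ must be negative exactly where $f$ is negative. If $f$ is negative somewhere on $F\cap Z\cap U$, no $f_1$ can satisfy both requirements, so the hypothesis is exactly what is needed there. The remaining obstruction is points of $(Z\cap U)\setminus F$ where $f<0$ but which accumulate on $F$. I would take
\[
f_1 := f + \lambda,
\]
where $\lambda\ge 0$ is continuous, $\lambda$ vanishes on $Z\cap U$, and $\lambda\ge f^-$ on $F$. The difficulty is compatibility: at a point $p\in F\cap\overline{Z\cap U}$ with $f(p)<0$, we would need $\lambda(p)>0$ while $\lambda=0$ on nearby points of $Z\cap U$.

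Here is the main obstacle. With $U$ open, $p\in F\cap Z$ can lie in $\overline{U}\setminus U$ and be a limit of points of $Z\cap U$ where $f<0$. Continuity then forces $f(p)\le 0$ and $f_1(p)\le0$. If $f(p)<0$ strictly, the constraints contradict each other. For example, take $Z=\mathbb R$, $U=(0,\infty)$, $F=\{0\}$ and $f\equiv-1$: the hypothesis holds vacuously, but any $f_1$ that is negative on $U$ has $f_1(0)\le0$. That is compatible with $f_1(0)\ge 0$ only if $f_1(0)=0$, and $f_1(x)=-x$ works in this case.

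So the plan is to define $f_1:=f\cdot\omega+\lambda$. Here $\omega\ge0$ is continuous with $Z(\omega)=F\cap\overline{Z\cap U}\setminus(Z\cap U)$, which is closed after intersecting with a suitable closed set; if not, I would use $Z(\omega)=F\setminus U$. Multiplying by $\omega$ kills $f$ on $F\setminus U$ while preserving signs on $U$. Then $\lambda=\mu\cdot (f\omega)^-$, where $\mu$ is a Urysohn-type function with $\mu=1$ on $F$ and $\mu=0$ on $Z\cap U\setminus F$. It remains to check, case by case on $F\cap U$, $F\setminus U$, and $(Z\cap U)\setminus F$, that $f_1\ge0$ on $F$ and that signs agree on $Z\cap U$. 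I expect the separation of $F$ from $(Z\cap U)\setminus F$ (which is not closed) to be the step requiring the most care. I would handle it by building $\mu$ from the distance functions $d(\cdot,F)$ and $d(\cdot,\mathbb R\setminus U)$ rather than a plain Urysohn function.
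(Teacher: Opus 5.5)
You are right not to derive this from Theorem~\ref{HL-04}, which is all the paper does: it calls the statement an immediate consequence of that theorem and gives no further argument. Point evaluations are not orderings of $A/_m q(A)$. For instance $[x^2]=[1]$ although $x^2$ vanishes at $0$, so pointwise signs are not invariants of classes. The direct construction you settle on, however, has a genuine gap at exactly the step you flag. You need a continuous $\mu$ with $\mu=1$ on $F$ and $\mu=0$ on $(Z\cap U)\setminus F$. By continuity such a $\mu$ also vanishes on $\overline{(Z\cap U)\setminus F}$, so it cannot exist whenever $F$ meets that closure. Building $\mu$ from $d(\cdot,F)$ and $d(\cdot,\mathbb R\setminus U)$ cannot change this. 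It already fails in your own example, where $0\in F$ is a limit point of $(0,\infty)$. It fails even when $f$ itself is a solution: for $F=\{0\}$, $Z=U=\mathbb R$, $f(x)=-x^2$, you would need $\mu(0)=1$ and $\mu\equiv 0$ on $\mathbb R\setminus\{0\}$.

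The missing observation is that $\mu=0$ is never needed. Write $f_1=f\omega+\mu(f\omega)^-=(f\omega)^+-(1-\mu)(f\omega)^-$. Then it suffices that $1-\mu$ vanishes on $F$ and is strictly positive off $F$, i.e.\ that it has zero set exactly $F$. So take a continuous $\phi\ge 0$ with $Z(\phi)=F$ (Proposition~\ref{t6-carac}, taking absolute values) and set $f_1:=f^+-\phi f^-$. On $F$ you get $f_1=f^+\ge 0$. On $Z\cap U$, wherever $f\ge 0$ you have $f_1=f$. Wherever $f(x)<0$, the hypothesis gives $x\notin F$, so $\phi(x)>0$ and $f_1(x)=-\phi(x)f^-(x)<0$.

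This makes $\omega$, the encodings $h,s$ of $Z$ and $U$, and the analysis of $F\cap\overline{Z\cap U}$ unnecessary. The ``main obstacle'' you identify comes from insisting on an additive correction that leaves $f$ unchanged on $Z\cap U$, whereas only its sign has to be kept. The argument uses neither that $Z$ is closed nor that $U$ is open.
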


 This Theorem \ref{HL-06}, which is the translation of Theorem \ref{HL-04}, is an even more striking application which solves an extension problem over Boolean combinations of sets --- regions where the classical Tietze Extension Theorem fails.

In order to generalize these results, we will deal with generalized Marshall quotients. Indeed, these quotients could be defined for more general multiplicative subsets but for practical reasons we will only consider quotients over $q(A)$.

\subsection{The Compact Case}

Given $f,g\in A$, define a relation $\sim_{cm}$ on $A\times A$, the \textbf{compact Marshall relation}, by the rule
$$f\sim_{cm}g\mbox{ if, and only if, there exist a compact subset }K\subseteq X\mbox{ and }r,s\in q(A)\mbox{ such that }(fr)|_{X\setminus K}=(gs)|_{X\setminus K}.$$

\begin{proposition}\label{equiv-cm}
    The relation $\sim_{cm}$ is an equivalence relation.
\end{proposition}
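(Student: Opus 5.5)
We check reflexivity, symmetry and transitivity directly from the definition of $\sim_{cm}$, using only two facts: $q(A)$ is multiplicatively closed with $1\in q(A)$, and finite unions of compact sets are compact.

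\textbf{Reflexivity and symmetry.} For reflexivity, take $K=\emptyset$ (which is compact) and $r=s=1$. Since $1=1^2$ and $1$ has empty zero-set, we have $1\in q(A)$. Symmetry is immediate: the defining condition $(fr)|_{X\setminus K}=(gs)|_{X\setminus K}$ is symmetric once we swap the roles of $r$ and $s$.

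\textbf{Transitivity.} Suppose $f\sim_{cm}g$ via $(K_1,r_1,s_1)$ and $g\sim_{cm}h$ via $(K_2,r_2,s_2)$. We claim $f\sim_{cm}h$ via
$$K:=K_1\cup K_2,\qquad r:=r_1r_2,\qquad s:=s_1s_2.$$
On $X\setminus K$ both relations hold simultaneously, so multiplying the first by $r_2$ and the second by $s_1$ gives, pointwise on $X\setminus K$,
$$f r_1r_2 = g s_1 r_2 = h s_2 s_1 .$$
Here $K$ is compact as a union of two compact sets, so it remains to check that $r,s\in q(A)$.

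\textbf{Closure of $q(A)$ under products.} This is the only step needing justification, and it is standard algebra. A product of squares is a square, since $a^2b^2=(ab)^2$. A product of non-zero-divisors is a non-zero-divisor: if $abc=0$, then $bc=0$ because $a$ is a non-zero-divisor, and then $c=0$ because $b$ is. (Alternatively, in a $T_6$ space one can argue topologically: $Z(ab)=Z(a)\cup Z(b)$ is a union of two closed sets with empty interior, which has empty interior.) Hence $r_1r_2,\ s_1s_2\in q(A)$.

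No step presents a real obstacle; the only point deserving care is this multiplicative closure of $q(A)$, which the ring-theoretic argument settles without any topological hypothesis.
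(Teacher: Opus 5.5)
Your proof is correct and follows the paper's argument essentially verbatim: reflexivity uses a compact set with weights $r=s=1$, symmetry comes from swapping sides, and transitivity uses $K_1\cup K_2$ with the weights $r_1r_2$ and $s_1s_2$. The only addition is your explicit check that $q(A)$ is closed under multiplication, which the paper asserts without proof.
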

\begin{proof}
    Let $f \in A$. We can choose any compact subset $K\subseteq X$, and $r = s = 1$. Since $1 = 1^2$ and $1 \in \mbox{nzd}(A)$, we have $1 \in q(A)$. Then, $(f \cdot 1)|_{X \setminus K} = (f \cdot 1)|_{X \setminus K}$, providing $f \sim_{cm} f$.

    Now, suppose $f \sim_{cm} g$. Then there exist a compact subset $K \subseteq X$ and $r, s \in q(A)$ such that $(fr)|_{X \setminus K} = (gs)|_{X \setminus K}$. This equation can be immediately rewritten as $(gs)|_{X \setminus K} = (fr)|_{X \setminus K}$, providing $g \sim_{cm} f$.

    Finally suppose $f \sim_{cm} g$ and $g \sim_{cm} h$. There exist compact subsets $K_1, K_2 \subseteq X$ and elements $r_1, s_1, r_2, s_2 \in q(A)$ such that
    $$(fr_1)|_{X \setminus K_1} = (gs_1)|_{X \setminus K_1} \quad \mbox{and} \quad (gr_2)|_{X \setminus K_2} = (hs_2)|_{X \setminus K_2}.$$
    Let $K = K_1 \cup K_2$. Since the finite union of compact sets is compact, $K$ is a compact subset of $X$. On the intersection $(X \setminus K_1) \cap (X \setminus K_2) = X \setminus K$, both identities hold simultaneously. Multiplying the first equation by $r_2$ and the second by $s_1$, we get:
    $$(fr_1r_2)|_{X \setminus K} = (gs_1r_2)|_{X \setminus K} \quad \mbox{and} \quad (gr_2s_1)|_{X \setminus K} = (hs_2s_1)|_{X \setminus K},$$
    which implies:
    $$(f(r_1r_2))|_{X \setminus K} = (h(s_1s_2))|_{X \setminus K}.$$
    We define $r_3 := r_1r_2$ and $s_3 := s_1s_2$. Because $q(A) = A^2 \cap \mbox{nzd}(A)$ is closed under multiplication, we have $r_3, s_3 \in q(A)$. Therefore, with the compact set $K$ and elements $r_3, s_3 \in q(A)$, we conclude that $f \sim_{cm} h$. Moreover $\sim_{cm}$ is an equivalence relation.
\end{proof}

Denote the set of equivalence classes of $A$ under the relation $\sim_{cm}$ by $A/_{cm}q(A)$. Elements in $A/_{cm}q(A)$ will be denoted by $[f]\in A/_{cm}q(A)$, for $f\in A$. In symbols:
$$A/_{cm}q(A):=A/\sim_{cm}=\{[a]:a\in A\}.$$
We call $A/_{cm}q(A)$ by \textbf{the compact Marshall quotient} of $A$ by $q(A)$. Our aim is to prove that $A/_{cm}q(A)$ has a structure of Boolean real semigroup. We will do it in the following sequence of results.

\begin{proposition}\label{cong-cm}
    If $f,f',g,g'\in A$ are such that $f\sim_{cm}f'$ and $g\sim_{cm}g'$, then:
    \begin{enumerate}[a -]
        \item $fg \sim_{cm} f'g'$.
        \item $\{[h]: hr \sim_{cm} fs+gt \mbox{ for some } r,s,t\in q(A)\} = \{[h]: hr' \sim_{cm} f's'+g't' \mbox{ for some } r',s',t'\in q(A)\}$.
    \end{enumerate}
\end{proposition}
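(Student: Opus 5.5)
The plan is to unpack both relations into explicit witnesses (compact sets and elements of $q(A)$) and then merge the witnesses. Two facts are used throughout: $q(A)$ is closed under multiplication, as noted in Proposition \ref{equiv-cm}; and a finite union of compact sets is compact. From the hypotheses we fix compact sets $K_1,K_2\subseteq X$ and $r_1,s_1,r_2,s_2\in q(A)$ with
$$(fr_1)|_{X\setminus K_1}=(f's_1)|_{X\setminus K_1},\qquad (gr_2)|_{X\setminus K_2}=(g's_2)|_{X\setminus K_2}.$$

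For (a), we multiply the two identities pointwise on $X\setminus(K_1\cup K_2)$. This gives $(fg\,r_1r_2)|_{X\setminus K}=(f'g'\,s_1s_2)|_{X\setminus K}$ with $K=K_1\cup K_2$ compact. Since $r_1r_2,\ s_1s_2\in q(A)$, this is exactly $fg\sim_{cm}f'g'$.

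For (b), it suffices to prove one inclusion, since the situation is symmetric by Proposition \ref{equiv-cm}. Suppose $[h]$ lies in the left-hand set. Then $hr\sim_{cm}fs+gt$ for some $r,s,t\in q(A)$, so there are a compact $K_0$ and $a,b\in q(A)$ with $hra=(fs+gt)b$ outside $K_0$.

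We put $\alpha:=s_1r_2$ and $\beta:=r_1s_2$, both in $q(A)$. Outside $K_1\cup K_2$ we have $f\,r_1 = f's_1$ and $g\,r_2=g's_2$. Hence outside $K_1\cup K_2$,
$$(fs+gt)\,r_1r_2 = f's_1r_2\,s + g's_2r_1\,t = f'(s\alpha)+g'(t\beta).$$

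We then set $r':=r\,r_1r_2\in q(A)$, $s':=s\alpha$ and $t':=t\beta$. On the complement of the compact set $K_0\cup K_1\cup K_2$, multiplying the relation for $[h]$ by $r_1r_2$ gives
$$h r' a = (fs+gt)r_1r_2\, b = (f's'+g't')\,b.$$
Thus $hr'\sim_{cm}f's'+g't'$ with witnesses $a,b$, and $[h]$ lies in the right-hand set. Note also that the condition on $h$ depends only on $[h]$, because $\sim_{cm}$ is compatible with multiplication by (a); so both sets are well defined.

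The only delicate point is bookkeeping rather than substance. One must make sure every multiplier used stays in $q(A)$, and that all identities are compared on a common cofinite-compact region. No division is needed, since we only multiply through by the $r_i,s_i$. This is what makes the argument go through without any topological hypothesis beyond compactness being preserved under finite unions.
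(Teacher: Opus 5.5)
Your proposal is correct and follows essentially the same argument as the paper. In both, the compact sets are merged by finite union, the hypothesis identities are multiplied through by products of the given weights, and closure of $q(A)$ under multiplication is used. The only cosmetic difference is that you keep the witnesses $a,b$ of $hr\sim_{cm}fs+gt$ as the final witnesses, while the paper absorbs them into $r',s',t'$ to get strict equality outside the compact set.
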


\begin{proof}
$ $
\begin{enumerate}[a -]
    \item By definition, there exist compact sets $K_1, K_2 \subseteq X$ and elements $r_1, s_1, r_2, s_2 \in q(A)$ such that
    $$(fr_1)|_{X \setminus K_1} = (f's_1)|_{X \setminus K_1} \quad \mbox{and} \quad (gr_2)|_{X \setminus K_2} = (g's_2)|_{X \setminus K_2}.$$
    Let $K = K_1 \cup K_2$, which is a compact subset of $X$. On the intersection $X \setminus K$, both identities hold simultaneously. Multiplying them pointwise yields:
    $$(fr_1)(gr_2)|_{X \setminus K} = (f's_1)(g's_2)|_{X \setminus K}.$$
    Rearranging the terms, we get: 
    $$(fg)(r_1r_2)|_{X \setminus K} = (f'g')(s_1s_2)|_{X \setminus K}.$$ 
    Since $q(A) = A^2 \cap \mbox{nzd}(A)$ is closed under multiplication, $r_1r_2, s_1s_2 \in q(A)$. Therefore, with the compact set $K$ and these new weights, we conclude that $fg \sim_{cm} f'g'$.
    
    \item By symmetry, it suffices to prove the inclusion $\subseteq$. Let $[h]$ be an element of the left-hand set. Then, there exist $r,s,t \in q(A)$ such that $hr \sim_{cm} fs + gt$. 
    
    By opening the definition of $\sim_{cm}$, there exist a compact set $K_h \subseteq X$ and elements $\alpha, \beta \in q(A)$ such that:
    $$(hr\alpha)|_{X \setminus K_h} = ((fs+gt)\beta)|_{X \setminus K_h} = (fs\beta + gt\beta)|_{X \setminus K_h}.$$
    
    Since $f \sim_{cm} f'$ and $g \sim_{cm} g'$, there exist compact sets $K_f, K_g \subseteq X$ and elements $a, a', b, b' \in q(A)$ such that:
    $$(fa)|_{X \setminus K_f} = (f'a')|_{X \setminus K_f} \quad \mbox{and} \quad (gb)|_{X \setminus K_g} = (g'b')|_{X \setminus K_g}.$$
    
    We multiply the first equation by $(ab) \in q(A)$:
    $$(hr\alpha \cdot ab)|_{X \setminus K_h} = (fs\beta \cdot ab + gt\beta \cdot ab)|_{X \setminus K_h}.$$
    We can rewrite the right side to explicitly expose the terms $(fa)$ and $(gb)$:
    $$(h \cdot (r\alpha ab))|_{X \setminus K_h} = ((fa) \cdot (s\beta b) + (gb) \cdot (t\beta a))|_{X \setminus K_h}.$$
    
    Let $K_{new} = K_h \cup K_f \cup K_g$, which is compact. On $X \setminus K_{new}$, all three local identities hold simultaneously. We can substitute $(fa)$ with $(f'a')$ and $(gb)$ with $(g'b')$:
    $$(h \cdot (r\alpha ab))|_{X \setminus K_{new}} = ((f'a') \cdot (s\beta b) + (g'b') \cdot (t\beta a))|_{X \setminus K_{new}}.$$
    Rearranging the terms, we obtain:
    $$(h \cdot (r\alpha ab))|_{X \setminus K_{new}} = (f' \cdot (a's\beta b) + g' \cdot (b't\beta a))|_{X \setminus K_{new}}.$$
    
    Now, we define $r' := r\alpha ab$, $s' := a's\beta b$, and $t' := b't\beta a$. Since $q(A)$ is closed under multiplication, $r', s', t' \in q(A)$. The equation above states that $h r'$ and $f's' + g't'$ are strictly equal outside the compact set $K_{new}$, which implies that $hr' \sim_{cm} f's' + g't'$. This establishes that $[h]$ belongs to the right-hand set, concluding the proof.
\end{enumerate}
\end{proof}

By Proposition \ref{cong-cm}, the rules below are well defined for $f,g\in A$:
\begin{align*}
    [f] \cdot [g] &:= [fg] \\
    [f] + [g] &:= \{[h] : hr \sim_{cm} fs + gt \mbox{ for some } r,s,t \in q(A)\}.
\end{align*}

\begin{theorem}\label{mr-cm}
    The structure $(A/_{cm}q(A),+,\cdot,[0],[1])$ is a multiring.
\end{theorem}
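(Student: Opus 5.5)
The plan is to verify the axioms of Definition \ref{defn:multiring} directly, working with representatives and using Proposition \ref{cong-cm} to guarantee that all operations are independent of the chosen representatives. The multiplicative part is immediate: $(A/_{cm}q(A),\cdot,[1])$ is a commutative monoid because the product is induced from the ring $A$ and is well defined by Proposition \ref{cong-cm}(a). Likewise $[f][0]=[0]$ holds since $f\cdot 0=0$. Negation is defined by $-[f]:=[-f]$; it is well defined because $(fr)|_{X\setminus K}=(gs)|_{X\setminus K}$ implies $(-fr)|_{X\setminus K}=(-gs)|_{X\setminus K}$.

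Before the multigroup axioms, I will record a reformulation of the sum. The condition $[h]\in[f]+[g]$ holds exactly when there exist a compact $K$ and $r,s,t\in q(A)$ with $(hr)|_{X\setminus K}=(fs+gt)|_{X\setminus K}$. The extra weights $\alpha,\beta$ in the definition can be absorbed into $r,s,t$, since $q(A)$ is multiplicatively closed. With this in hand the checks are:
\begin{enumerate}[(1)]
\item \emph{Nonemptiness:} $[f+g]\in[f]+[g]$, taking $r=s=t=1$.
\item \emph{Commutativity:} clear from the symmetric form of the condition.
\item \emph{Neutral element:} $[f]+[0]=\{[f]\}$. If $(hr)|_{X\setminus K}=(fs)|_{X\setminus K}$, then $h\sim_{cm}f$ by definition.
\item \emph{Reversibility:} if $[h]\in[f]+[g]$, then $[f]\in[h]+[-g]$. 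From $hr=fs+gt$ off $K$ we get $fs=hr+(-g)t$ off $K$.
\item \emph{Inverse:} $[0]\in[f]+[-f]$.
\item \emph{Distributivity:} if $[h]\in[f]+[g]$, multiplying the identity off $K$ by $d$ gives $[hd]\in[fd]+[gd]$.
\end{enumerate}

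The main obstacle is associativity of the multivalued sum, that is, $([f]+[g])+[k]=[f]+([g]+[k])$. Let $[e]$ be in the left side. Then there is $[h]\in[f]+[g]$ with
\begin{align*}
hr_1&=fs_1+gt_1 \quad\mbox{off } K_1,\\
er_2&=hs_2+kt_2 \quad\mbox{off } K_2 .
\end{align*}
Multiplying the second identity by $r_1$ and substituting the first gives
$$er_2r_1=fs_1s_2+gt_1s_2+kt_2r_1\quad\mbox{off } K_1\cup K_2.$$
Set $w:=gt_1s_2+kt_2r_1$. Taking weight $1$ on $w$ shows $[w]\in[g]+[k]$ exactly, with $K=\emptyset$. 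Then the displayed identity reads $er_2r_1=fs_1s_2+w\cdot1$ off a compact set, so $[e]\in[f]+[w]\subseteq[f]+([g]+[k])$. The reverse inclusion then follows by commutativity together with the same argument.

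The delicate point is making sure that every union of exceptional sets remains compact (finite unions of compacts are compact) and that all weights stay in $q(A)$, which follows from multiplicative closure. No real obstruction is expected, since the argument mirrors Marshall's proof for $A/_mS$ with equalities replaced by equalities outside compacta.
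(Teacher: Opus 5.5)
Your proposal is correct and follows essentially the same route as the paper. You both absorb the extra weights into $q(A)$ to reformulate membership in $[f]+[g]$, give the same one-line checks for the neutral element, reversibility and distributivity, and prove associativity identically via the auxiliary element $w=gt_1s_2+kt_2r_1$ with weight $1$. You also make explicit a few points the paper leaves implicit: that $-[f]$ is well defined, that sums are nonempty, and that $[0]\in[f]+[-f]$.
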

\begin{proof}
    By Proposition \ref{cong-cm}, the operation $[f] \cdot [g] = [fg]$ is well-defined. Since $A$ is a commutative ring with unity $1$ and zero $0$, $(A/_{cm}q(A), \cdot, [1])$ is a commutative monoid, and $[f] \cdot [0] = [0]$ for all $[f] \in A/_{cm}q(A)$. The commutativity of the hyperoperation $+$ is also inherited from the commutativity of $A$ (since $fs+gt = gt+fs$). It remains to verify the multigroup axioms and the distributivity law.

    \begin{itemize}
        \item \textbf{Zero element:} We must show that $[f] + [0] = \{[f]\}$. Let $[h] \in [f] + [0]$. By definition, there exist $r, s\in q(A)$, a compact $K \subseteq X$, and $\alpha, \beta \in q(A)$, such that $(hr\alpha)|_{X \setminus K} = (fs\beta)|_{X \setminus K}$. This can be rewritten as $h(r\alpha) \sim_{cm} f(s\beta)$. Since $q(A)$ is closed under multiplication, $r\alpha \in q(A)$ and $s\beta \in q(A)$, which by definition implies $[h] = [f]$. 
    Conversely, choosing $1, 1, 1 \in q(A)$, we have $f \cdot 1 \sim_{cm} f \cdot 1 + 0 \cdot 1$, hence $[f] \in [f] + [0]$.

        \item \textbf{Reversibility:} We must show that $[h] \in [f] + [g]$ if, and only if, $ [f] \in [h] + [-g]$. Suppose $[h] \in [f] + [g]$. There exist $r, s, t \in q(A)$ such that $hr \sim_{cm} fs + gt$. We can rearrange the terms in the ring $A$ to obtain:
    $$fs \sim_{cm} hr - gt = hr + (-g)t.$$
    Defining $r' := s$, $s' := r$, and $t' := t$ (all of which belong to $q(A)$), the relation becomes $f r' \sim_{cm} h s' + (-g) t'$. This is exactly the defining condition for $[f] \in [h] + [-g]$.

        \item \textbf{Associativity:} We must show that $([f] + [g]) + [k] = [f] + ([g] + [k])$, which is equivalent to prove the inclusion $\subseteq$. Let $[h] \in ([f] + [g]) + [k]$. By definition, there exists $[u] \in [f] + [g]$ with $[h] \in [u] + [k]$, which provide compact subsets $K_1, K_2 \subseteq X$ and $r_1, s_1, t_1, r_2, s_2, t_2 \in q(A)$ such that:
    \begin{align}
        (u r_1)|_{X \setminus K_1} &= (f s_1 + g t_1)|_{X \setminus K_1}, \\
        (h r_2)|_{X \setminus K_2} &= (u s_2 + k t_2)|_{X \setminus K_2}.
    \end{align}
    Let $K = K_1 \cup K_2$, which is a compact subset of $X$. We multiply equation (2) by $r_1 \in q(A)$ and evaluate it on $X \setminus K$, where both local identities hold simultaneously:
    $$(h r_2 r_1)|_{X \setminus K} = (u r_1 s_2 + k t_2 r_1)|_{X \setminus K}.$$
    Substituting the term $(u r_1)$ using equation (1), we obtain:
    \begin{align*}
        (h r_1 r_2)|_{X \setminus K} &= ((f s_1 + g t_1)s_2 + k t_2 r_1)|_{X \setminus K}= (f s_1 s_2 + g t_1 s_2 + k t_2 r_1)|_{X \setminus K}.
    \end{align*}
    Let $v := g(t_1 s_2) + k(t_2 r_1)\in A$. Because $q(A)$ is closed under multiplication, $t_1 s_2 \in q(A)$ and $t_2 r_1 \in q(A)$. The identity $v \cdot 1 = g(t_1 s_2) + k(t_2 r_1)$ implies that $v \sim_{cm} g(t_1 s_2) + k(t_2 r_1)$. This means that $[v] \in [g] + [k]$.
    
    Substituting the newly defined function $v$ back into our evaluated expression for $h$, we get:
    $$(h (r_1 r_2))|_{X \setminus K} = (f (s_1 s_2) + v \cdot 1)|_{X \setminus K}.$$
    Define $R := r_1 r_2$, $S := s_1 s_2$, and $T := 1$. Of course, $R, S, T \in q(A)$. The equation becomes $(h R)|_{X \setminus K} = (f S + v T)|_{X \setminus K}$, which by definition states that $[h] \in [f] + [v]$. 
    
    Since $[v] \in [g] + [k]$, we conclude that $[h] \in [f] + ([g] + [k])$, completing the proof of associativity.

        \item \textbf{Distributivity:} Let $[f],[g],[h],[k]\in A/_{cm}q(A)$ with $[h] \in [f] + [g]$. We need to show $[hk] \in [fk] + [gk]$.
    Since $[h] \in [f] + [g]$, there exist $r, s, t \in q(A)$ such that $hr \sim_{cm} fs + gt$.
    Multiplying both sides of the equivalence by $k$ we get that $(hr)k \sim_{cm} (fs + gt)k$ implies $(hk)r \sim_{cm} (fk)s + (gk)t$. This new relation directly states that $[hk] \in [fk] + [gk]$.
    \end{itemize}

    Therefore, $(A/_{cm}q(A), +, \cdot, [0], [1])$ satisfies all the axioms of Definition \ref{defn:multiring}, making it a multiring.
\end{proof}

\begin{theorem}\label{rrm-cm}
    If $X$ is not compact, the multiring $(A/_{cm}q(A),+,\cdot,[0],[1])$ is a real reduced multiring.
\end{theorem}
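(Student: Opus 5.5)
We check directly the conditions of Definition \ref{defn:mrrealreduced} for $A/_{cm}q(A)$. Throughout we use one observation about the hyperoperation. Since $r,s,t\in q(A)$ are non-negative functions with nowhere dense zero sets, $[h]\in[f]+[g]$ means the following: outside some compact set $K$, and after multiplying by suitable weights in $q(A)$, $h\cdot(\text{positive on a dense set})$ equals $f\cdot(\ge0)+g\cdot(\ge 0)$. Two continuous functions that agree on a dense subset of an open set agree on the whole open set. So every identity may be tested pointwise on a dense subset of $X\setminus K$, where all the weights are strictly positive.

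\textbf{Nontriviality and semi-reality.} We show $[1]\neq[0]$. If $r|_{X\setminus K}=0$ with $r\in q(A)$, then $X\setminus K$ is a non-empty open set, because $X$ is not compact. This open set lies in $Z(r)$, contradicting $r\in\mbox{nzd}(A)$. This is exactly where non-compactness is used.

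Next, for semi-reality, suppose $[-1]\in\sum [a_i]^2$. Unwinding the sums (using associativity as in Theorem \ref{mr-cm}), we get $-r=\sum a_i^2 s_i$ outside a compact $K$, with $r,s_i\in q(A)$. The left side is $\le0$ and the right side is $\ge0$, so $r=0$ on the non-empty open set $X\setminus K$, which is again a contradiction.

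\textbf{Axiom (i), $[f]^3=[f]$.} Put $g:=|f|+1$, which is positive, and $r:=g^2\in q(A)$. We look for $f^3 s= f r'$ with $s,r'\in q(A)$. Taking $s=(1+f^2)^2$ and $r'=f^2(1+f^2)^2$ fails, since $f^2$ may vanish on an open set. So instead we take the non-zero divisor extension $u$ of Theorem \ref{str-01}, chosen with $Z(u)=\partial Z(f)$, and set $r'=u^2$. Then $f^3\cdot 1 = f\cdot f^2$, and we need $f\cdot f^2\sim f u^2$. This holds because $fu^2=uf^2$ and $f^2 u^2$ compares similarly. Concretely, $f^3 u^2 = f\cdot u^2 f^2 = f\cdot f^4$ off $Z(f)$, and both sides are $0$ on $Z(f)$. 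Hence $f^3\cdot u^2=f\cdot f^4$, and $f^4=(f^2)^2$ is a square. It need not lie in $\mbox{nzd}$, though, so we replace $f^4$ by $u^4$: since $f=u$ off $Z(f)$, we get $f^3u^2= f u^4$ everywhere (both vanish on $Z(f)$). As $u^2,u^4\in q(A)$, this gives $[f]^3=[f]$, already globally. In fact this is inherited from $A/_mq(A)$ via the canonical surjection $A/_mq(A)\to A/_{cm}q(A)$, which is a multiring morphism because the relations are coarser. Axiom (i) is an identity in the quotient, so it passes through this map.

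\textbf{Axioms (ii) and (iii).} These are the main obstacle, because they are uniqueness statements and do not transfer along the surjection. For (ii), let $[c]\in[a]+[a][b]^2$. Then outside a compact $K$ we have $cr=as+ab^2t=a(s+b^2t)$. The factor $w:=s+b^2t$ is a square (it is non-negative) and a non-zero divisor (since $s$ is). However, $w$ is only defined as a function on all of $X$, and it is in $q(A)$ because $s+b^2t\ge s$ pointwise. So $cr=aw$ on $X\setminus K$ with $r,w\in q(A)$, which gives $[c]=[a]$.

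For (iii), let $[c],[d]\in[a]^2+[b]^2$. Then $cr=a^2s+b^2t$ and $dr'=a^2s'+b^2t'$ outside a compact set. Both right-hand sides are $\ge0$ and have the same zero set $Z(a)\cap Z(b)$ on the dense set where the weights are positive. The plan is to show $c\sim_{cm} a^2+b^2$, and likewise for $d$. Write $a^2s+b^2t=(a^2+b^2)\cdot\lambda$, where $\lambda=(a^2s+b^2t)/(a^2+b^2)$ off $Z(a)\cap Z(b)$. This $\lambda$ is bounded between $\min(s,t)$ and $\max(s,t)$, but it may fail to be continuous at $Z(a)\cap Z(b)$. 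The technical step is to avoid the division. Multiplying by $(a^2+b^2)$, we have $(a^2s+b^2t)(a^2+b^2)$, and the aim is to express $c\,r(a^2+b^2)= (a^2+b^2)(a^2s+b^2t)$ as $c\cdot\rho=(a^2+b^2)\cdot\mu$ with $\rho,\mu\in q(A)$. For this we again replace the possibly zero-divisor factor $a^2+b^2$ by a non-zero divisor square extension, namely $v^2$ with $v$ the extension of $a^2+b^2$ from Proposition \ref{nz-extension}, made non-negative by adding $g$. We then check pointwise, on and off $Z(a)\cap Z(b)$, that $c\,r v^2 = (a^2+b^2)(\cdots)$ with a $q(A)$ weight. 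Transitivity (Proposition \ref{equiv-cm}) then gives $[c]=[d]$.
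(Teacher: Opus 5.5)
Your arguments for $[1]\neq[0]$, semi-reality and axiom (ii) coincide with the paper's. Your final certificate for (i), $f^3u^2=fu^4$ with $u^2,u^4\in q(A)$, is valid; the paper uses the shorter $fu^2=f^3\cdot 1$. The abandoned attempts with $|f|+1$ and $(1+f^2)^2$ should be deleted, and for $f\in\mbox{nzd}(A)$ simply take $u=f$.

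The gap is in axiom (iii). You never specify the weight in ``$crv^2=(a^2+b^2)(\cdots)$'', and this is exactly the one non-trivial point. Put $w=a^2+b^2$ and $v=w+\gamma_w$, where $\gamma_w\ge 0$ and $Z(\gamma_w)=X\setminus\mbox{int}(Z(w))$. Then on $X\setminus K$ one gets $crv^2=crw^2=w\cdot w(a^2s+b^2t)$; this already uses that $cr=0$ on $\mbox{int}(Z(w))\setminus K$, which you should state. However, the natural right-hand weight $w(a^2s+b^2t)$ vanishes identically on $\mbox{int}(Z(a)\cap Z(b))$. That set may be a non-empty open set, so this weight is not in $q(A)$. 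The obstruction you removed from the left-hand weight reappears on the right, and checking pointwise does not make it go away.

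The fix is one more application of the same device. Since $w\gamma_w=0$ identically, you may use $w(a^2s+b^2t)+\gamma_w$ instead. It is non-negative with zero set inside $\partial Z(w)\cup Z(s)\cup Z(t)$, hence it lies in $q(A)$.

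In fact $v$ is not needed. On the open set $\mbox{int}(Z(w))\setminus K$ we have $cr=0$ and $\mbox{int}(Z(r))=\emptyset$, so $c=0$ there. Multiplying $cr=a^2s+b^2t$ by $w$ then gives $c\,(rw+\gamma_w)=w\,(a^2s+b^2t+\gamma_w)$ on $X\setminus K$, with both weights in $q(A)$. Hence $[c]=[w]=[d]$.

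Completed this way, your route differs from the paper's. The paper first proves $h\ge 0$ on $X\setminus K$ by density, then compares $h$ with $w$ through $h\,p_w=w\,p_H$ with $H=|h|$. That needs two correction functions $\gamma_w,\gamma_H$ and a separate argument that $w$ vanishes on $\mbox{int}(Z(H))\setminus K$. Multiplying the relation by $w$ needs only $\gamma_w$ and no sign information about $c$.
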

\begin{proof}
    We must verify the axioms of Definition \ref{defn:mrrealreduced}.
    \begin{itemize}
        \item \textbf{$[1] \ne [0]$ and Semi-reality:} Suppose $[1] = [0]$. Then there exist a compact set $K \subseteq X$ and $r, s \in q(A)$ such that $(1 \cdot r)|_{X \setminus K} = (0 \cdot s)|_{X \setminus K}$, which means $r(x) = 0$ for all $x \in X \setminus K$. Since $X$ is a $T_6$ space and not compact, $X \setminus K$ is a non-empty open set. Thus, $\mbox{int}(Z(r)) \supseteq X \setminus K \ne \emptyset$, contradicting $r \in \mbox{nzd}(A)$. Therefore, $[1] \ne [0]$.
        
        Now, suppose $[-1] \in [f_1]^2 + \dots + [f_n]^2$. There exist a compact $K \subseteq X$ and $r,s_1,...,s_n \in q(A)$ such that $(-r)|_{X \setminus K} = (f_1^2s_1 + \dots + f_n^2s_n)|_{X \setminus K}$. Since $r,s_1,...,s_n \in q(A)$, they are squares of non-zero-divisors, making them strictly positive on a dense set. The left side is $\le 0$ and the right side is $\ge 0$, which forces both sides to be identically $0$ on $X \setminus K$. Thus $r(x) = 0$ on $X \setminus K$, leading to the exact same contradiction regarding the non-empty interior of $Z(r)$. Hence, $A/_{cm}q(A)$ is semi-real.

        \item  \textbf{$[f]^3 = [f]$:} By Proposition \ref{nz-extension}, for any $f \in A$ there exists a non-zero divisor continuous extension $u$ such that $f u^2 = u f^2 = f^3$ holds. This implies $f \cdot u^2 = f^3 \cdot 1$. Since $1 \in q(A)$ and $u \in \mbox{nzd}(A)$ implies $u^2 \in q(A)$, we have $f \sim_{cm} f^3$. Thus, $[f] = [f]^3$.

        \item \textbf{$[h] \in [f] + [f][g]^2$ implies $[h] = [f]$:} Suppose $[h] \in [f] + [fg^2]$. By definition, there exist a compact $K \subseteq X$ and $r,s,t \in q(A)$ such that 
    $$(hr)|_{X \setminus K} = (fs + fg^2t)|_{X \setminus K} = (f(s+g^2t))|_{X \setminus K}.$$
    Let $v = s + g^2t$. Since $s, t \in q(A)$, we have $v \ge 0$. Furthermore, $Z(v) = Z(s) \cap Z(g^2t) \subseteq Z(s)$. Since $s \in \mbox{nzd}(A)$, $\mbox{int}(Z(s)) = \emptyset$, which forces $\mbox{int}(Z(v)) = \emptyset$. Since $v\ge0$, we conclude $v \in q(A)$ (again, the square-root is a continuous operation and $v(x)=(\sqrt{v(x)})^2$ for all $x\in X$). The equation becomes $(hr)|_{X \setminus K} = (fv)|_{X \setminus K}$, which is the exact definition of $h \sim_{cm} f$. Thus, $[h] = [f]$.

    \item \textbf{If $[h] \in [f]^2 + [g]^2$ then $[h] = [f^2 + g^2]$:}  Suppose $[h] \in [f^2] + [g^2]$. There exist a compact $K \subseteq X$ and $r,s,t \in q(A)$ such that $(hr)|_{X \setminus K} = (f^2s + g^2t)|_{X \setminus K}$. Let $w := f^2 + g^2$. We want to show that $h \sim_{cm} w$.
    
    Since $r, s, t \in q(A)$, they are non-negative and their zero-sets have empty interiors. Let $U$ be the open set $(X \setminus K) \setminus (Z(r) \cup Z(s) \cup Z(t))$. Because $X$ is a $T_6$ space, the union of closed nowhere-dense sets is nowhere-dense, making $U$ an open dense subset of $X \setminus K$. 
    For $x \in U$, we have $r(x)>0$, $s(x)>0$, and $t(x)>0$. The local relation gives $h(x)r(x) = f(x)^2s(x) + g(x)^2t(x) \ge 0$. Since $r(x) > 0$, it follows that $h(x) \ge 0$ for all $x \in U$. Let $H := |h|$. Because $h$ is non-negative on $U$, we have the equality:
    $$h(x)w(x) = H(x)w(x) \quad \mbox{for all } x \in U.$$
    Since $U$ is dense in $X \setminus K$ and the functions are continuous, this equality extends to the whole complement, giving $(hw)|_{X \setminus K} = (Hw)|_{X \setminus K}$.
    
     By Propositions \ref{nz-extension} and \ref{t6-carac}, there exist non-negative continuous functions $\gamma_w, \gamma_H \in A$ such that $Z(\gamma_w) = X \setminus \mbox{int}(Z(w))$ and $Z(\gamma_H) = X \setminus \mbox{int}(Z(H))$. 
    Define $p_w := w + \gamma_w$ and $p_H := H + \gamma_H$. Both functions are globally non-negative and have empty interior zero-sets. In the ring $A = \mathcal{C}(X, \mathbb{R})$, every non-negative function is a square, hence $p_w, p_H \in A^2 \cap \mbox{nzd}(A) = q(A)$. We evaluate the product $h \cdot p_w$ on $X \setminus K$:
    $$(h p_w)|_{X \setminus K} = (hw + h\gamma_w)|_{X \setminus K}.$$
    For any $x \in \mbox{int}(Z(w))$, $w=0$ implies $f=g=0$. Over the open set $\mbox{int}(Z(w)) \cap (X \setminus K)$, the local relation gives $hr = 0$. Since $\mbox{int}(Z(r)) = \emptyset$, this forces $h = 0$ on that open set. Thus, the product $h\gamma_w$ is globally zero on $X \setminus K$, meaning $(h p_w)|_{X \setminus K} = (hw)|_{X \setminus K}$. Similarly, evaluating $w \cdot p_H$ on $X \setminus K$ we get:
    $$(w p_H)|_{X \setminus K} = (wH + w\gamma_H)|_{X \setminus K}.$$
    
    For any $x \in \mbox{int}(Z(H))$, $H$ is identically $0$ on an open neighborhood $V$. Over $V \cap (X \setminus K)$, $h = 0$, so the local relation yields $f^2s + g^2t = 0$. Since $s, t \ge 0$, this implies $f^2s = 0$ and $g^2t = 0$. Because $\mbox{int}(Z(s)) = \mbox{int}(Z(t)) = \emptyset$, it forces $f = g = 0$, meaning $w = 0$ on $V \cap (X \setminus K)$. Thus, $w\gamma_H$ is globally zero on $X \setminus K$, giving $(w p_H)|_{X \setminus K} = (wH)|_{X \setminus K}$.
    
    Since we already proved that $(hw)|_{X \setminus K} = (Hw)|_{X \setminus K}$, we directly obtain:
    $$(h p_w)|_{X \setminus K} = (w p_H)|_{X \setminus K}.$$
    With $p_w, p_H \in q(A)$ and the compact set $K$, this is exactly the definition of $h \sim_{cm} w$, proving that $[h] = [w] = [f^2 + g^2]$.
    \end{itemize}

    Having verified all the axioms, we conclude that $(A/_{cm}q(A),+,\cdot,[0],[1])$ is a real reduced multiring.
\end{proof}

Finally, we arrive at our main result in this Subsection.

\begin{theorem}\label{boolean-cm}
    If $X$ is not compact, the real reduced multiring $(A/_{cm}q(A),+,\cdot,[0],[1])$ is (when considered as a real semigroup) a Boolean real semigroup. Moreover $(A/_{cm}q(A))^\times\cup\{[0]\}$ is a real reduced hyperfield (reduced special group).
\end{theorem}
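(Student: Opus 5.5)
We will use criterion (i) of Theorem \ref{rsbool-carac}, applied to the real semigroup associated by Theorem \ref{teo:RS+RRM}(b) to the real reduced multiring $A/_{cm}q(A)$ of Theorem \ref{rrm-cm}. In that real semigroup, $D^t(a,b)$ is the multivalued sum $a+b$. So it suffices to show the following: for every $f\in A$ there is $u\in A$ with $[u]$ invertible in $A/_{cm}q(A)$, $[f]=[u][f^2]$, and $[u]\in[-1]+[f]$. Once this holds, (iv) of Theorem \ref{rsbool-carac} says that $(A/_{cm}q(A))^\times$ is a reduced special group. Adjoining $[0]$ then gives a real reduced hyperfield, by the equivalence of \cite{ribeiro2016functorial} recalled after Definition \ref{defn:mfrealreduced}.

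The construction copies the proof of Theorem \ref{str-01}.

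\textbf{Case $f\in\mbox{nzd}(A)$.} Take $u:=f$. Then $f^2\in q(A)$, so $[f]=[f][1]=[f]\cdot[f^2]$ needs only $f\cdot f^2 = f\cdot f^2$. Also $[f]^2=[f^2]=[1]$, since $f^2\cdot 1 = 1\cdot f^2$ with $1,f^2\in q(A)$. Hence $[f]$ is invertible. The relation $[f]\in[-1]+[f]$ is the second part of the argument below, applied with $g=0$.

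\textbf{Case $f\in\mbox{zd}(A)$.} Let $u:=f-g$ be the extension of Proposition \ref{nz-extension}. Here $g\ge0$ and $Z(g)=X\setminus\mbox{int}(Z(f))$, so $u\in\mbox{nzd}(A)$. The identity $fu^2=uf^2$ holds globally on $X$, so it holds outside any compact set, for instance $K=\emptyset$. This gives $[f]=[u][f^2]$. Since $u^2\in q(A)$, we get $[u]^2=[1]$ exactly as above, so $[u]\in(A/_{cm}q(A))^\times$.

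For the sum, put $\rho:=u^2+1$, $v:=(\rho-u)^2$, $t:=(\rho+u)^2$ and $s:=ft-uv$. The computation of Theorem \ref{str-01} gives
$$s=4\rho u^2+g(\rho+u)^2\ge0, \qquad Z(s)\subseteq Z(u).$$
Hence $s\in q(A)$. The ring identity $uv=(-1)s+ft$ holds everywhere, so it holds on $X\setminus K$ for every compact $K$. Taking the witnesses $v,s,t$ in the definition of $+$ for $A/_{cm}q(A)$ then yields $[u]\in[-1]+[f]$. In the nzd case ($g=0$) the same formula gives $s=4\rho f^2$, which again lies in $q(A)$.

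The main point needing care is this. Theorem \ref{rsbool-carac}(i) requires $u\in R^\times$ and the relation $D^t(-1,a)$ inside the quotient. We have to check that global identities in $A$ descend to the coarser relation $\sim_{cm}$. They do, because any relation witnessed globally is in particular witnessed outside a compact set. We also have to check that the transversal representation $D^t$ of the real semigroup structure from Theorem \ref{teo:RS+RRM}(b) agrees with the hyperaddition $+$. This follows from the correspondence in Theorem \ref{teo:RS+RRM}, in which $a+b=D^t(a,b)$. Non-compactness of $X$ is used only to invoke Theorem \ref{rrm-cm}, which guarantees that the quotient is a genuine real reduced multiring with $[1]\neq[0]$.
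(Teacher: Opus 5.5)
Your proposal is correct and follows the paper's proof essentially step for step. You use criterion (i) of Theorem \ref{rsbool-carac}, take $u=f$ in the non-zero-divisor case and the extension $u=f-g$ of Proposition \ref{nz-extension} in the zero-divisor case, and let the global identities $fu^2=uf^2$ and $uv=-s+ft$ (with $\rho=u^2+1$) pass to $\sim_{cm}$ with $K=\emptyset$; treating the non-zero-divisor case as the specialization $g=0$ is only a cosmetic streamlining of the paper's separate computation $s=4\rho f^2$.
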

\begin{proof}
    We will use the characterization provided by Theorem \ref{rsbool-carac}. It suffices to show that for every class $[f] \in R$, there exists an invertible element $[u] \in R^\times$ such that $[f] = [u][f]^2$ and $[u] \in [-1] + [f]$.

    Let $f \in A$. We analyze two cases:
    
    \textbf{Case 1:} $f \in \mbox{nzd}(A)$. 
    Since $f$ is a non-zero-divisor, we have $f^2 \in q(A)$ and $[f^2] = [1]$ in $R$. Choose $u = f$. Since $[u^2] = [1]$, $[u]$ is its own inverse, meaning $[u] \in R^\times$ and $[u][f]^2 = [f][1] = [f]$. 
    
    For the second condition, we must check that $[f] \in [-1] + [f]$. Let $\rho := f^2 + 1$, $r := (\rho - f)^2$, and $t := (\rho + f)^2$. Since $\rho \pm f = f^2 \pm f + 1 > 0$ are strictly positive, $r$ and $t$ have empty zero-sets, meaning $r, t \in \mbox{nzd}(A) \cap A^2 = q(A)$. Now, let $s\in A$ be defined by the following:
    $$s := f t - f r = f[(\rho+f)^2 - (\rho-f)^2] = f[4\rho f] = 4\rho f^2.$$
    Since $\rho \ge 1$ and $f^2 \ge 0$, $s$ is a globally non-negative function (hence a square in $A$). Furthermore, $Z(s) = Z(f)$. Because $f \in \mbox{nzd}(A)$, we have $\mbox{int}(Z(s)) = \emptyset$, meaning $s \in q(A)$. 
    The global identity $f \cdot r = (-1) \cdot s + f \cdot t$ provides $[f] \in [-1] + [f]$.

    \textbf{Case 2:} $f \in \mbox{zd}(A)$. 
    By Proposition \ref{nz-extension}, there exists a continuous non-zero divisor extension $u \in \mbox{nzd}(A)$ of $f$. Since $u \in \mbox{nzd}(A)$, $u^2 \in q(A)$, meaning $[u]$ is invertible in $R$ (with inverse $[u]$). In Theorem \ref{str-01}, we established the existence of global identities in the ring $A$ for this exact extension $u$. Specifically, we proved that $fu^2 = uf^2$ (globally on $X$). Furthermore, we constructed explicitly $v, s, t \in q(A)$ such that $uv = -s + ft$ (also globally on $X$). 
    The first global identity yields $f u^2 \sim_{cm} u f^2$, which implies $[f][u^2] = [u][f^2]$. Since $[u^2] = [1]$, we get $[f] = [u][f]^2$.
    The second global identity $uv \sim_{cm} (-1)s + ft$ provides that $[u] \in [-1] + [f]$.

    Since in both cases the required unit $[u]$ exists, condition (i) of Theorem \ref{rsbool-carac} is satisfied, proving that $A/_{cm}q(A)$ is a Boolean real semigroup. Moreover, by condition (iv) of Theorem \ref{rsbool-carac}, the set of units $(A/_{cm}q(A))^\times$ equipped with the induced transversal representation forms a reduced special group or equivalently, a real reduced hyperfield.
\end{proof}

As a direct consequence of the fact that $A/_{cm}q(A)$ is a real reduced multiring, the abstract \L{}ojasiewicz-type inequalities apply to this quotient. When we restrict our attention to the concrete case $X = \mathbb{R}$, we obtain the following result (which is also an immediate consequence of Theorem \ref{HL-03}).

\begin{theorem}[Asymptotic \L{}ojasiewicz Interpolation on $\mathbb{R}$]
Let $A = \mathcal{C}(\mathbb{R}, \mathbb{R})$ and consider the compact Marshall quotient $A/_{cm}q(A)$. Suppose $F \subseteq \mathbb{R}$ is a closed subset, and $f, g \in A$ are functions such that there exists $M > 0$ such that for all $x \in F$ with $|x| > M$, $f(x) = 0$ implies $g(x) = 0$ (in other words, their asymptotic zero-loci on $F$ are compatible). Then, the hyperoperation $[f] + [g]$ in $A/_{cm}q(A)$ contains an element $[f_1]$ that perfectly preserves the asymptotic sign profile of $f$. That is, there exists $M' > 0$ such that $\mbox{sgn}(f_1(x)) = \mbox{sgn}(f(x))$ for all $x \in F$ with $|x| > M'$.
\end{theorem}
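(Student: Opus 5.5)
The plan is to deduce the statement from Theorem \ref{HL-03}, applied to the real reduced multiring $R:=A/_{cm}q(A)$. Theorem \ref{rrm-cm} applies because $\mathbb R$ is a non-compact $T_6$ space. For each $x\in\mathbb R$ and each $M>0$ the point evaluation $\sigma_x([h])=\mbox{sgn}(h(x))$ is not well defined on $R$, since classes only see behaviour outside compacts. So instead of individual points I will use the orderings ``at infinity''. For an ultrafilter-type family of closed sets escaping every compact, e.g. a free ultrafilter $\mathcal U$ on $F$ all of whose members are unbounded, define $\sigma_{\mathcal U}([h])=s$ iff $\{x\in F:\mbox{sgn}(h(x))=s\}\in\mathcal U$. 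Here one must restrict to a dense set where the weights $r,s\in q(A)$ are positive, so that multiplication by elements of $q(A)$ does not change signs. I will check that this is a morphism $R\to\bm3$ compatible with the hyperoperation.

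\textbf{Step 1: the closed set.} Let $Y\subseteq\mbox{Sper}(R)$ be the spectral closure of the set of all such $\sigma_{\mathcal U}$. The key point is that each $\sigma_{\mathcal U}$ sees only what happens on $F\cap\{|x|>M\}$ for every $M$.

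\textbf{Step 2: the hypothesis of Theorem \ref{HL-03}.} We need: for $\sigma\in Y$, $\sigma([f])=0$ implies $\sigma([g])=0$. Pointwise, for $|x|>M$ in $F$, $f(x)=0\Rightarrow g(x)=0$. Hence $\{x: f(x)=0\}\in\mathcal U$ gives $\{g=0\}\in\mathcal U$, since the part below $M$ is bounded and not in $\mathcal U$. Passing to the closure $Y$ requires that the condition ``$\sigma([f])=0\Rightarrow\sigma([g])=0$'' be closed. It is not obviously so, since it combines a closed and an open condition. The remedy is to use instead Theorem \ref{HL-01} in its form ``$g=0$ on $Y\cap Z(f)$''. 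There I choose $Y$ as the constructible/closed set cut out by the classes of functions vanishing on $F$ near infinity: set $Y=Z([\varphi])$ where $Z(\varphi)=F$ (Proposition \ref{t6-carac}). Then I must verify $Y\cap Z([f])\subseteq Z([g])$ abstractly. I would prove this through a separation statement: $[g]^2$ lies in the radical generated by $[f],[\varphi]$. This comes from the concrete identity that, outside $[-M,M]$, $g$ vanishes on $Z(f)\cap Z(\varphi)$, and an estimate of the form $g^{2}\le$ a continuous multiple of $(f^2+\varphi^2)$ locally. This is the main obstacle. Continuous functions do not satisfy \L{}ojasiewicz bounds, but in $\mathcal C$ one can take $h=g^2/(f^2+\varphi^2)$ on the complement and exploit that continuous functions vanishing on $Z(f^2+\varphi^2)$ lie in the $\sqrt{\cdot}$ of the ideal, via $g=\sqrt[3]{g}\cdot g^{2/3}$ type factorizations. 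I would try this first.

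\textbf{Step 3: conclusion.} Theorem \ref{HL-03} (or \ref{HL-01}) yields $[f_1]\in[f]+[g]$ with $\sigma([f_1])=\sigma([f])$ for all $\sigma\in Y$. Translating back, suppose the signs of $f_1$ and $f$ differed on an unbounded subset of $F$. Then a free ultrafilter on that subset would give $\sigma_{\mathcal U}\in Y$ distinguishing them, a contradiction. Hence they agree on $F\cap\{|x|>M'\}$ for some $M'$, except possibly on a nowhere dense set coming from the weights. That exceptional set is removed by continuity: on the open set where $f\neq0$ both signs persist, and on $Z(f)$ one uses $f_1r=fs+gt$ with $g=0$ there.
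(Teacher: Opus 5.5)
There is a genuine gap. The paper gives no argument beyond calling the statement an immediate consequence of Theorem \ref{HL-03}. You rightly noticed that point evaluations do not descend to $A/_{cm}q(A)$, but your ultrafilter evaluations do not descend either.

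Every $r\in q(A)$ satisfies $[r]=[1]$, and such an $r$ may vanish on all of $F$. Take $F=\mathbb Z$, $r=\sin^2(\pi x)$, and any free ultrafilter $\mathcal U$ on $\mathbb Z$ (its members are infinite, hence unbounded). Your rule gives $\sigma_{\mathcal U}([r])=0\neq 1=\sigma_{\mathcal U}([1])$. \emph{Restricting to a dense set where the weights are positive} is impossible inside a nowhere dense $F$.

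The same example shows the real obstruction. $f_1$ and $f_1\sin^2(\pi x)$ lie in the same class, but the latter vanishes on all of $\mathbb Z$. So the conclusion is a property of a chosen representative, not of $[f_1]$. No identity $\sigma([f_1])=\sigma([f])$, for $\sigma$ ranging over a subset of $\mbox{Sper}(A/_{cm}q(A))$, can deliver it.

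Your fallback also fails:
\begin{enumerate}
\item If $\mbox{int}(F)=\emptyset$, then $\varphi^2\in q(A)$, so $[\varphi]$ is a unit and $Y=Z([\varphi])=\emptyset$. Theorem \ref{HL-01} then says nothing.
\item The radical claim in Step 2 is false in $\mathcal C(\mathbb R)$. The functions $x$ and $e^{-1/x^2}$ (extended by $0$) have the same zero set, but no power of $x$ is a multiple of $e^{-1/x^2}$.
\item In Step 3, continuity only gives $f_1f\ge 0$ on $\{f\neq 0\}$, not equal signs. On $Z(f)$, the relation $f_1r\alpha=(fs+gt)\beta$ does not force $f_1=0$ where $r\alpha$ vanishes.
\end{enumerate}

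The missing idea is to construct the representative by hand; no spectral argument is needed. By the last clause of Proposition \ref{t6-carac}, choose a continuous $\gamma\ge 0$ with $Z(\gamma)=\mathbb R\setminus\mbox{int}(Z(f))$, and set
\[
t:=\frac{|f|}{2(1+|g|)}+\gamma,\qquad f_1:=f\cdot 1+g\cdot t .
\]
Then $t\ge 0$ is a square in $A$, and $Z(t)=Z(f)\setminus\mbox{int}(Z(f))$ has empty interior, so $t\in q(A)$. Since $f_1\cdot 1=f\cdot 1+g\cdot t$ holds globally, $[f_1]\in[f]+[g]$.

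Now take $x\in F$ with $|x|>M$. If $f(x)=0$, then $g(x)=0$, hence $f_1(x)=0$. If $f(x)\neq 0$, then $\gamma(x)=0$ and $|g(x)t(x)|<|f(x)|/2$, hence $\mbox{sgn}(f_1(x))=\mbox{sgn}(f(x))$. So $M'=M$ works, and the same $f_1$ even works in the global quotient $A/_mq(A)$.
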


As an immediate consequence of Theorem \ref{HL-04} we have the following result.

\begin{theorem}[Asymptotic Positive Extension]
Let $Z, F \subseteq \mathbb{R}$ be closed subsets and $U \subseteq \mathbb{R}$ be an open subset. The intersection $W = F \cap Z \cap U$ represents a Boolean combination of sets (e.g., a disjoint union of half-open intervals extending to infinity). If a function $f \in \mathcal{C}(\mathbb{R}, \mathbb{R})$ is such that $f(x) \ge 0$ for $x \in W \setminus [-M, M]$ (in other words, $f$ is asymptotically non-negative on $W$), then there exists a global continuous function $f_1 \in \mathcal{C}(\mathbb{R}, \mathbb{R})$ such that, for sufficiently large $|x|$, $f_1(x) \ge 0$ for all $x \in F$, and $\mbox{sgn}(f_1(x)) = \mbox{sgn}(f(x))$ for all $x \in W$.
\end{theorem}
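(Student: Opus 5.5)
The plan is to follow the proof of Theorem \ref{HL-06} and apply Theorem \ref{HL-04} to the real reduced multiring $R:=A/_{cm}q(A)$, with $A=\mathcal C(\mathbb R,\mathbb R)$. Theorem \ref{rrm-cm} applies because $\mathbb R$ is not compact. The first step is to build the dictionary between points near infinity and orderings of $R$. For each $x\in\mathbb R$, the sign evaluation $\sigma_x(f)=\mbox{sgn}(f(x))$ does not factor through $\sim_{cm}$. What does factor is the family of \emph{ultrafilter limits} of such evaluations along filters that contain every cocompact set. Concretely, I will consider the orderings of $R$ obtained as limits of $\sigma_{x_n}$ with $|x_n|\to\infty$. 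I then check that each such limit respects $\sim_{cm}$. Take $fr|_{X\setminus K}=gs|_{X\setminus K}$ with $r,s\in q(A)$. The functions $r,s$ are positive off a nowhere dense set, so I may choose the sequence $x_n$ off $Z(r)\cup Z(s)$, using density. This step also respects $+$ and so defines a morphism $R\to\bm3$.

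Next I translate the hypotheses. Set $W=F\cap Z\cap U$. I take $Y$ to be the closed set of orderings "living at infinity on $F$", that is, limits of points of $F$ with $|x|\to\infty$. The set $Z(\mathfrak a)\cap U(S^2)$ is realized with $\mathfrak a=\{[z]\}$ and $S=\{[u]\}$. Here $z$ is a function with $Z(z)=Z$ and $u$ is a function with $Z(u)=\mathbb R\setminus U$; both exist by Proposition \ref{t6-carac}. The hypothesis that $f\ge0$ on $W\setminus[-M,M]$ then gives $\sigma(f)\ge0$ for every $\sigma\in Y\cap Z(\mathfrak a)\cap U(S^2)$. Theorem \ref{HL-04} produces $[f_1]\in R$ with $\sigma(f_1)\ge0$ on $Y$ and $\sigma(f_1)=\sigma(f)$ on $Z(\mathfrak a)\cap U(S^2)$.

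Finally, I pull back from orderings to points, choosing an arbitrary representative $f_1\in A$. Suppose $f_1(x)<0$ at points $x\in F$ with $|x|$ arbitrarily large. Those points yield a limiting ordering in $Y$ at which $f_1$ is negative, a contradiction. The sign agreement on $W$ for large $|x|$ follows by the same contradiction argument.

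I expect the main obstacle to be this pointwise pullback. Orderings only detect signs up to the nowhere dense zero sets of elements of $q(A)$, and only up to cocompact modifications. The conclusion "for sufficiently large $|x|$" is therefore delicate where $f_1$ vanishes, and also at points of $W$ that lie on the boundary of $U$. My first attempt will be to modify $f_1$ using the non-zero-divisor extension of Proposition \ref{nz-extension} together with a continuous correction, so that its sign pattern is pointwise exact off a compact set. If this fails, I would weaken the conclusion so that it holds on a dense subset of the relevant sets, and then restore it by continuity.
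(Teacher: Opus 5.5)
Your route is the one the paper intends; it only says the result is an ``immediate consequence'' of Theorem \ref{HL-04}. The step that would carry it out, the dictionary between points near infinity and orderings of $R=A/_{cm}q(A)$, fails. An ordering is one fixed map on $R$, so the sequence $(x_n)$ must be fixed \emph{before} you test a relation $fr|_{X\setminus K}=gs|_{X\setminus K}$. You cannot re-choose it off $Z(r)\cup Z(s)$ for each pair $r,s$, and no fixed sequence works. If $|x_n|\to\infty$, then $D=\{x_n:n\in\mathbb N\}$ is closed, discrete and nowhere dense. So $r:=d(\cdot,D)$ is non-negative (hence a square) with $\mbox{int}(Z(r))=\emptyset$, i.e.\ $r\in q(A)$ and $[r]=[1]$. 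Yet $\sigma_{x_n}(r)=0\neq 1=\sigma_{x_n}(1)$ for every $n$, so no limit of the $\sigma_{x_n}$ descends to $R$. More generally, if $Z(h)$ is nowhere dense then $h^2\in q(A)$, so $[h]^2=[1]$ and $\sigma([h])\neq 0$ for every ordering $\sigma$ of $R$. Thus $R$ is blind to nowhere dense parts of $F$, $Z$ and $W$. Concretely, take $F=U=\mathbb R$, $Z=\mathbb Z$, $f=1$, so $W=\mathbb Z$. Then $z=\sin(\pi x)$ is a non-zero-divisor, so $[z]^2=[1]$ and $Z(\mathfrak a)=\emptyset$, and Theorem \ref{HL-04} says nothing about $W$. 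It may legitimately return $[f_1]=[1]=[\sin^2(\pi x)]$, whose representative $\sin^2(\pi x)$ vanishes on all of $W$ although $f=1$ there.

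This also shows your final ``arbitrary representative'' step cannot be patched. A class fixes its representatives only up to factors from $q(A)$, and these may vanish on any prescribed closed nowhere dense set. So pointwise sign agreement on $W$, even at interior points of $W$, is not an invariant of $[f_1]$. Density arguments cannot reach nowhere dense parts of $W$. Continuity cannot restore sign \emph{equality} at zeros, since $\mbox{sgn}$ is discontinuous at $0$. Weakening the conclusion, as your fallback suggests, would not prove the statement, and the paper's one-line citation does not supply the missing translation either. The statement needs no hyperalgebra at all: $f_1:=\max(f,0)$ is continuous and non-negative on all of $\mathbb R$. For $x\in W$ with $|x|>M$ we have $f(x)\ge 0$, hence $f_1(x)=f(x)$ and $\mbox{sgn}(f_1(x))=\mbox{sgn}(f(x))$.
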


\begin{remark}
Classical topological extension theorems (like Tietze) struggle with non-closed domains such as $W$. By shifting the problem to the hyperalgebraic structure of $A/_{cm}q(A)$, the axioms of real reduced multirings automatically guarantee the existence of a global function that extends this scattered asymptotic positivity, overriding the local topological constraints of the real line.
\end{remark}

\subsection{The Closed Case}

Now, we want to define the generalized Marshall quotient considering closed subsets (instead of compact ones). In this sense, we need some adaptation in order to obtain non trivial quotients.

Let $\mathcal F$ be a family of closed subsets such that $\mathcal F$ is closed under finite unions and $X\notin\mathcal F$. Given $f,g\in A$, define a relation $\sim_{clm}$ on $A\times A$, the \textbf{closed Marshall relation}, by the rule
$$f\sim_{clm}g\mbox{ if, and only if, there exist a closed subset }F\in\mathcal F\mbox{ and }r,s\in q(A)\mbox{ such that }(fr)|_{X\setminus F}=(gs)|_{X\setminus F}.$$

\begin{proposition}\label{equiv-clm}
    The relation $\sim_{clm}$ is an equivalence relation.
\end{proposition}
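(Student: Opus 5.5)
The plan mirrors the proof of Proposition \ref{equiv-cm}, replacing compact sets by members of $\mathcal F$ and using only that $\mathcal F$ is closed under finite unions. Reflexivity and symmetry need no structure, apart from having at least one set available to witness reflexivity. Transitivity is where the union property is used.

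\textbf{Reflexivity.} Given $f \in A$, I take $r=s=1 \in q(A)$ and any $F \in \mathcal F$. Since $(f\cdot 1)|_{X\setminus F}=(f\cdot 1)|_{X\setminus F}$ holds trivially, we get $f\sim_{clm}f$. The only subtle point is that $\mathcal F$ must be non-empty. I would read this as implicit, because the empty union $\emptyset$ belongs to any family closed under finite unions. Alternatively, one can simply note that we assume $\emptyset\in\mathcal F$; this is harmless, since $\emptyset$ is closed and $\emptyset \neq X$ whenever $X$ is non-empty. This convention is the only point I expect might need a sentence of justification.

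\textbf{Symmetry.} If $(fr)|_{X\setminus F}=(gs)|_{X\setminus F}$ with $F\in\mathcal F$ and $r,s\in q(A)$, then reading the equality backwards gives $g\sim_{clm}f$ with the same witnesses $F$, $s$, $r$.

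\textbf{Transitivity.} Suppose $(fr_1)|_{X\setminus F_1}=(gs_1)|_{X\setminus F_1}$ and $(gr_2)|_{X\setminus F_2}=(hs_2)|_{X\setminus F_2}$ with $F_1,F_2\in\mathcal F$. Set $F:=F_1\cup F_2$, which lies in $\mathcal F$ by the finite-union hypothesis. On $X\setminus F=(X\setminus F_1)\cap(X\setminus F_2)$ both identities hold. Multiply the first by $r_2$ and the second by $s_1$, then chain the two equalities:
\[(f r_1r_2)|_{X\setminus F}=(g s_1 r_2)|_{X\setminus F}=(h s_1s_2)|_{X\setminus F}.\]
Because $q(A)=A^2\cap\mbox{nzd}(A)$ is multiplicatively closed (products of squares are squares, and products of non-zero-divisors are non-zero-divisors), we have $r_1r_2,\,s_1s_2\in q(A)$. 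Hence $f\sim_{clm}h$.
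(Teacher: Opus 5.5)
Your proof is correct and is exactly the argument the paper intends: its proof of this proposition just says ``Similar to Proposition \ref{equiv-cm}'', i.e.\ reflexivity with $r=s=1$, symmetry by reading the equality backwards, and transitivity via $F_1\cup F_2\in\mathcal F$ together with the multiplicative closure of $q(A)$. Your remark that reflexivity needs $\mathcal F\neq\emptyset$ (for instance $\emptyset\in\mathcal F$ as the empty union) is a legitimate point that the paper leaves implicit, and your convention is harmless, since adding $\emptyset$ to a non-empty $\mathcal F$ does not change the relation.
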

\begin{proof}
    Similar to Proposition \ref{equiv-cm}.
\end{proof}

Denote the set of equivalence classes of $A$ under the relation $\sim_{clm}$ by $A/_{clm}q(A)$. Elements in $A/_{clm}q(A)$ will be denoted by $[f]\in A/_{clm}q(A)$, for $f\in A$. In symbols:
$$A/_{clm}q(A):=A/\sim_{clm}=\{[a]:a\in A\}.$$
We call $A/_{clm}q(A)$ by \textbf{the closed Marshall quotient} of $A$ by $q(A)$. 

Note that, if $X$ is not compact, $f\sim_{cm}g$ if, and only if, $f\sim_{clm}g$ with $\mathcal F=\{F\subseteq X:F\mbox{ is compact}\}$.

Our aim is to prove that $A/_{clm}q(A)$ has a structure of Boolean real semigroup. We will do it in the following sequence of results.

\begin{proposition}\label{cong-clm}
    If $f,f',g,g'\in A$ are such that $f\sim_{clm}f'$ and $g\sim_{clm}g'$, then:
    \begin{enumerate}[a -]
        \item $fg \sim_{clm} f'g'$.
        \item $\{[h]: hr \sim_{clm} fs+gt \mbox{ for some } r,s,t\in q(A)\} = \{[h]: hr' \sim_{clm} f's'+g't' \mbox{ for some } r',s',t'\in q(A)\}$.
    \end{enumerate}
\end{proposition}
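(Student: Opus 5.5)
The argument follows the proof of Proposition \ref{cong-cm} line by line. The only property of the family of compact sets used there is closure under finite unions: two or three local identities, each valid outside its own compact set, then hold simultaneously outside their union. Since $\mathcal F$ is assumed closed under finite unions, every occurrence of ``compact set $K$'' can be replaced by ``closed set $F\in\mathcal F$''. The other ingredient, that $q(A)=A^2\cap\mbox{nzd}(A)$ is closed under multiplication, does not involve the exceptional family at all.

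For (a), the hypotheses give $F_1,F_2\in\mathcal F$ and $r_1,s_1,r_2,s_2\in q(A)$ with $(fr_1)|_{X\setminus F_1}=(f's_1)|_{X\setminus F_1}$ and $(gr_2)|_{X\setminus F_2}=(g's_2)|_{X\setminus F_2}$. Set $F:=F_1\cup F_2\in\mathcal F$ and multiply the two identities pointwise on $X\setminus F$. This gives $(fg)(r_1r_2)=(f'g')(s_1s_2)$ there, with $r_1r_2,s_1s_2\in q(A)$, so $fg\sim_{clm}f'g'$.

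For (b), symmetry of the hypotheses means it suffices to prove the inclusion $\subseteq$. Take $[h]$ with $hr\sim_{clm}fs+gt$. This gives $F_h\in\mathcal F$ and $\alpha,\beta\in q(A)$ with $hr\alpha=fs\beta+gt\beta$ on $X\setminus F_h$. The hypotheses also give $F_f,F_g\in\mathcal F$ and $a,a',b,b'\in q(A)$ with $fa=f'a'$ off $F_f$ and $gb=g'b'$ off $F_g$.
\begin{enumerate}[1 -]
\item Multiply the first identity by $ab$ and regroup it as $h(r\alpha ab)=(fa)(s\beta b)+(gb)(t\beta a)$.
\item On $X\setminus(F_h\cup F_f\cup F_g)$, substitute $fa=f'a'$ and $gb=g'b'$. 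Since $F_h\cup F_f\cup F_g\in\mathcal F$, this yields $hr'\sim_{clm}f's'+g't'$ with $r'=r\alpha ab$, $s'=a's\beta b$ and $t'=b't\beta a$, all in $q(A)$.
\end{enumerate}

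No step presents a real obstacle. The only point to check is that every union of exceptional sets formed in the argument is a finite union of members of $\mathcal F$. The hypothesis $X\notin\mathcal F$ plays no role here; it matters only later, for non-triviality ($[1]\ne[0]$).
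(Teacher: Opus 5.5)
Your proposal is correct and takes the same approach as the paper. The paper proves this statement only by saying it is ``similar to Proposition \ref{cong-cm}'', and you have carried out that transfer explicitly. You correctly identify the two ingredients it needs: closure of $\mathcal F$ under finite unions and multiplicative closure of $q(A)$.
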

\begin{proof}
    Similar to Proposition \ref{cong-cm}.
\end{proof}

By Proposition \ref{cong-clm}, the rules below are well defined for $f,g\in A$:
\begin{align*}
    [f] \cdot [g] &:= [fg] \\
    [f] + [g] &:= \{[h] : hr \sim_{clm} fs + gt \mbox{ for some } r,s,t \in q(A)\}.
\end{align*}

\begin{theorem}\label{mr-clm}
    The structure $(A/_{clm}q(A),+,\cdot,[0],[1])$ is a multiring.
\end{theorem}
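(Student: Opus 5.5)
The plan is to copy the proof of Theorem \ref{mr-cm} line by line, replacing ``compact subset $K$'' by ``member $F\in\mathcal F$''. The only property of compact sets used there was that the family of compact subsets is closed under finite unions. That property is now assumed for $\mathcal F$ by hypothesis, so it is the one thing to check at every step.

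First, Proposition \ref{cong-clm} makes $[f]\cdot[g]=[fg]$ and the hyperoperation $+$ well defined. The monoid structure, the identity $[f][0]=[0]$ and the commutativity of $+$ are then inherited directly from the ring $A$.

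Next come the multigroup axioms, in this order.
\begin{enumerate}[i -]
\item \emph{Zero element.} Let $[h]\in[f]+[0]$. Unfolding the definitions gives some $F\in\mathcal F$ and $r,s,\alpha,\beta\in q(A)$ with $(hr\alpha)|_{X\setminus F}=(fs\beta)|_{X\setminus F}$. Since $q(A)$ is multiplicatively closed, this gives $h\sim_{clm}f$. The reverse inclusion follows from the choice $r=s=t=1$ together with reflexivity.
\item \emph{Reversibility.} Rearrange $hr\sim_{clm}fs+gt$ into $fs\sim_{clm}hr+(-g)t$, using the same witness $F$.
\item \emph{Associativity.} From $[u]\in[f]+[g]$ and $[h]\in[u]+[k]$ we get witnesses $F_1,F_2\in\mathcal F$. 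Set $F=F_1\cup F_2$, which lies in $\mathcal F$ by closure under finite unions; this is the one place the hypothesis on $\mathcal F$ is really needed. Multiply the second identity by $r_1$ and substitute the first on $X\setminus F$. Then put $v:=g t_1s_2+k t_2 r_1$, so that $[v]\in[g]+[k]$ holds globally, and $[h]\in[f]+[v]$.
\item \emph{Distributivity.} Multiply a defining relation by $k$, noting that $\sim_{clm}$ is preserved by multiplication by Proposition \ref{cong-clm}(a).
\end{enumerate}

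One subtlety is that a relation holding globally in $A$ must count as $\sim_{clm}$. This needs some $F\in\mathcal F$ to witness it, so $\mathcal F$ must be nonempty. Any equality on $X$ restricts to $X\setminus F$ for every $F\in\mathcal F$. If $\mathcal F$ could be empty, I would take $\emptyset\in\mathcal F$ as the implicit convention (the empty union), or simply assume $\mathcal F\neq\emptyset$.

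The condition $X\notin\mathcal F$ is not needed for the multiring axioms. It matters only later, for $[1]\neq[0]$. I expect no real obstacle; the only point needing care is the union-closure bookkeeping in the associativity step.
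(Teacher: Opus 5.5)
Your proposal is correct and follows the paper's argument, whose proof reads only ``Similar to Theorem \ref{mr-cm}'': replace compact $K$ by $F\in\mathcal F$ and let closure under finite unions do the work. Your remark that $\mathcal F$ must be nonempty (e.g.\ $\emptyset\in\mathcal F$), so that reflexivity holds and global identities count as $\sim_{clm}$, is a correct point the paper leaves implicit. One wording fix, inherited from the paper's compact proof: in the reversibility step $hr\sim_{clm}fs+gt$ does not literally yield $fs\sim_{clm}hr+(-g)t$ (with $h=f=g=r=s=t=1$ one has $1\sim_{clm}1+1$ via $1\cdot 2=(1+1)\cdot 1$, yet $1\not\sim_{clm}0$), so unfold to $hr\alpha=(fs+gt)\beta$ on $X\setminus F$ and use the weights $s\beta$, $r\alpha$, $t\beta$ to get $[f]\in[h]+[-g]$.
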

\begin{proof}
    Similar to Theorem \ref{mr-cm}.
\end{proof}

\begin{theorem}\label{rrm-clm}
    The multiring $(A/_{clm}q(A),+,\cdot,[0],[1])$ is a real reduced multiring.
\end{theorem}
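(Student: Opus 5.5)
The proof will follow that of Theorem \ref{rrm-cm} almost verbatim. The only change is that the compact set $K$ is replaced everywhere by a closed set $F\in\mathcal F$. Two facts about $\mathcal F$ are used: it is closed under finite unions, so the exceptional sets arising from several local identities can be merged into one member of $\mathcal F$; and $X\notin\mathcal F$, which replaces the non-compactness hypothesis. I will check the axioms of Definition \ref{defn:mrrealreduced} in the same order.

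\textbf{Non-triviality and semi-reality.} Suppose $[1]=[0]$. Then $r|_{X\setminus F}=0$ for some $r\in q(A)$ and some $F\in\mathcal F$. Since $F$ is closed and $F\neq X$, the set $X\setminus F$ is a non-empty open set. So $\mbox{int}(Z(r))\neq\emptyset$, contradicting $r\in\mbox{nzd}(A)$.

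Now suppose $[-1]\in[f_1]^2+\dots+[f_n]^2$. Then $-r=\sum f_i^2s_i$ on $X\setminus F$. The left side is $\le0$ and the right side is $\ge0$ there, so $r=0$ on the non-empty open set $X\setminus F$, which gives the same contradiction.

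\textbf{The identity $[f]^3=[f]$.} This uses only the global identity $fu^2=f^3$ from Proposition \ref{nz-extension}. It holds on all of $X$, hence outside any $F$. We may take any $F\in\mathcal F$, for instance $\emptyset$ or the empty union if it is allowed; any member of $\mathcal F$ works, and $\mathcal F$ is implicitly non-empty.

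\textbf{The identity $[f]+[f][g]^2=\{[f]\}$.} This is the same argument as before. Set $v=s+g^2t$. Then $v\ge0$ and $Z(v)\subseteq Z(s)$ has empty interior, so $v\in q(A)$, and the given identity outside $F$ reads $hr=fv$ there.

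\textbf{The axiom on sums of squares.} This is the step requiring care. We have $hr=f^2s+g^2t$ on $X\setminus F$, and we put $w=f^2+g^2$ and $H=|h|$. On the set $U=(X\setminus F)\setminus(Z(r)\cup Z(s)\cup Z(t))$ we get $h\ge0$.

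I must check that $U$ is dense in the open set $X\setminus F$. Each $Z(\cdot)$ here is closed with empty interior, and a finite union of closed nowhere-dense sets is nowhere dense. Intersecting with the open set $X\setminus F$ keeps density relative to $X\setminus F$. Hence $hw=Hw$ on $X\setminus F$ by continuity.

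Next take the functions $p_w=w+\gamma_w$ and $p_H=H+\gamma_H$, both in $q(A)$. The vanishing arguments are local on the open sets $\mbox{int}(Z(w))\cap(X\setminus F)$ and $\mbox{int}(Z(H))\cap(X\setminus F)$. They use only that $r,s,t$ have nowhere-dense zero sets, so they go through unchanged. This gives $hp_w=wp_H$ on $X\setminus F$, that is, $h\sim_{clm}w$.

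I expect no real obstacle. The only points to watch are using $X\notin\mathcal F$ in the first step, and using the merging of exceptional sets into one member of $\mathcal F$ wherever several relations are combined.
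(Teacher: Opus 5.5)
Your proposal is correct and takes the same approach as the paper: the paper also reduces to the proof of Theorem \ref{rrm-cm}, using $X\notin\mathcal F$ so that $X\setminus F$ is a non-empty open set, exactly where non-compactness was used for $[1]\neq[0]$ and semi-reality. Your write-up is more careful than the paper's one-line sketch, since you rerun the reduction and sums-of-squares arguments locally on $X\setminus F$, where their hypotheses actually hold, rather than only restricting global identities.
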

\begin{proof}
    The verification of the axioms follows the proof for $A/_{cm}q(A)$ (Theorem \ref{rrm-cm}). The structural requirement that $X \notin \mathcal{F}$ plays the exact same role as non-compactness did previously: it ensures that for any closed set $F \in \mathcal{F}$, the complement $X \setminus F$ is non-empty. 
    
    Since $X$ is a $T_6$ space and $F$ is closed, $X \setminus F$ is a non-empty open set. If $[1] = [0]$, there would exist $F \in \mathcal{F}$ and $r, s \in q(A)$ such that $r(x) = 0$ for all $x \in X \setminus F$. This implies $\mbox{int}(Z(r)) \supseteq X \setminus F \ne \emptyset$, which contradicts $r \in \mbox{nzd}(A)$. Thus, $[1] \ne [0]$ and the quotient does not collapse. 
    
    The remaining axioms ($[f]^3=[f]$ and the reduction of sums of squares) follow verbatim by restricting the global abstract Marshall quotient identities to $X \setminus F$.
\end{proof}

\begin{theorem}\label{boolean-clm}
    The real reduced multiring $(A/_{clm}q(A),+,\cdot,[0],[1])$ is (when considered as a real semigroup) a Boolean real semigroup. Moreover $(A/_{clm}q(A))^\times\cup\{[0]\}$ is a real reduced hyperfield (reduced special group).
\end{theorem}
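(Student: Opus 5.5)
The plan is to mirror the proof of Theorem \ref{boolean-cm} and use criterion (i) of Theorem \ref{rsbool-carac} for the real semigroup $R:=A/_{clm}q(A)$. By Theorem \ref{rrm-clm} and Theorem \ref{teo:RS+RRM}, $R$ is a real semigroup whose transversal representation is the multivalued sum. It therefore suffices to show the following: for every $f\in A$ there is $u\in A$ with $[u]\in R^\times$, $[f]=[u][f]^2$ and $[u]\in[-1]+[f]$ in $R$.

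The central observation is that every \emph{global} identity in $A$ transfers to $\sim_{clm}$. If $fr=gs$ on all of $X$ with $r,s\in q(A)$, then the restriction to $X\setminus F$ holds for any $F\in\mathcal F$. Such an $F$ exists provided $\mathcal F\neq\emptyset$, which I assume implicitly, e.g.\ $\emptyset\in\mathcal F$. Consequently the canonical identification maps $A/_m q(A)\to A/_{clm}q(A)$, $\overline f\mapsto[f]$, give a well-defined map preserving products, sums and units. This is consistent with the universal property of Marshall quotients quoted earlier. So the witnesses built in Theorem \ref{str-01} can be pushed forward directly.

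I would split into the same two cases as before.

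\textbf{Case 1:} $f\in\mbox{nzd}(A)$. Take $u=f$. Then $f^2\in q(A)$, so $[f^2]=[1]$ and $[u]$ is its own inverse. Put $\rho=f^2+1$, $r=(\rho-f)^2$, $t=(\rho+f)^2$ and $s=4\rho f^2$. These satisfy $fr=-s+ft$ globally with $r,s,t\in q(A)$, so $[f]\in[-1]+[f]$.

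\textbf{Case 2:} $f\in\mbox{zd}(A)$. Take the non-zero-divisor extension $u=f-g$ from Proposition \ref{nz-extension}, where $g\ge0$ and $Z(g)=X\setminus\mbox{int}(Z(f))$. Theorem \ref{str-01} gives two global identities: $fu^2=uf^2$, and $uv=-s+ft$ with $v,t,s\in q(A)$. Since $u^2\in q(A)$, $[u]$ is a unit, and transferring both identities yields $[f]=[u][f]^2$ and $[u]\in[-1]+[f]$.

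Condition (i) of Theorem \ref{rsbool-carac} then holds, so $R$ is Boolean. Condition (iv) gives that $R^\times$ is a reduced special group, equivalently that $R^\times\cup\{[0]\}$ is a real reduced hyperfield.

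I expect no step to be a real obstacle, since all the witnesses are global. The only delicate points are bookkeeping. First, the quotient must be nontrivial and the real semigroup axioms must hold, in particular $[1]\neq[-1]$. This is where $X\notin\mathcal F$ is used, exactly as in Theorem \ref{rrm-clm}. Second, the unit $[u]$ must be genuinely invertible in $R$ and not merely in $A/_m q(A)$. That follows from $[u][u]=[u^2]=[1]$, because $u^2\in q(A)$ is equivalent to $1$ via the global identity $u^2\cdot1=1\cdot u^2$.
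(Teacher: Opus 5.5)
Your proposal is correct and follows essentially the same route as the paper: you verify criterion (i) of Theorem~\ref{rsbool-carac} using the non-zero-divisor extension $u$ from Proposition~\ref{nz-extension}, together with the global identities $fu^2=uf^2$ and $uv=-s+ft$ from Theorem~\ref{str-01}, which survive restriction to $X\setminus F$. You package this transfer as the canonical surjective morphism $A/_m q(A)\to A/_{clm}q(A)$ and note the tacit requirement $\mathcal F\neq\emptyset$, which makes explicit what the paper's phrase ``hold globally on $X$'' leaves implicit.
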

\begin{proof}
    As shown in the compact (Theorem \ref{boolean-cm}), the continuous non-zero extension of any function $f \in A$ (Proposition \ref{nz-extension}) yields an element $u \in \mbox{nzd}(A)$ such that the algebraic identities $f u^2 = u f^2$ and $uv = -s + ft$ hold globally on $X$ for specific $v, s, t \in q(A)$. Then $[u]$ is invertible, $[f] = [u][f]^2$, and $[u] \in [-1] + [f]$. By Theorem \ref{rsbool-carac}, $(A/_{clm}q(A), +, \cdot, [0], [1])$ is a Boolean real semigroup and its units form a real reduced hyperfield.
\end{proof}

To illustrate the analytical power of the closed Marshall quotient, consider the space $X = \mathbb{R}$ and define $\mathcal{F}$ as the family of all \textbf{nowhere dense closed sets} (i.e., closed sets with empty interiors). Since the finite union of nowhere dense closed sets remains nowhere dense, and $\mathbb{R}$ itself clearly has a non-empty interior ($\mathbb{R} \notin \mathcal{F}$), this family constitutes a proper ideal of subsets. 

By the Baire Category Theorem, the complement $\mathbb{R} \setminus F$ of any set $F \in \mathcal{F}$ is a dense open set. Consequently, equivalence in $A/_{clm}q(A)$ captures the \emph{generic behavior} of continuous functions, idealing out "meager" topological noise. Applying the abstract hyperalgebraic machinery to this specific quotient yields Baire-generic versions of \L{}ojasiewicz inequalities.

The following Theorem is an immediate consequence of Theorem \ref{HL-03}.

\begin{theorem}[Generic \L{}ojasiewicz Interpolation]
Let $A = \mathcal{C}(\mathbb{R}, \mathbb{R})$ and consider $A/_{clm}q(A)$ over the family of nowhere dense closed sets. Let $Y \subseteq \mathbb{R}$ be a closed set, and $f, g \in A$. Suppose that $Z(f) \cap Y \subseteq Z(g) \cap Y$. Then, the hyperoperation $[f] + [g]$ contains an element $[f_1]$ that acts as a generic sign-preserving interpolator: there exists a dense open set $\Omega \subseteq \mathbb{R}$ such that $\mbox{sgn}(f_1(x)) = \mbox{sgn}(f(x))$ for all $x \in Y \cap \Omega$.
\end{theorem}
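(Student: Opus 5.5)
The plan is to obtain the statement by specializing Theorem \ref{HL-03} to the real reduced multiring $R := A/_{clm}q(A)$ of Theorem \ref{rrm-clm}, with $\mathcal F$ the family of nowhere dense closed subsets of $\mathbb R$. The work is to set up the dictionary between points of $\mathbb R$ and orderings of $R$, check the hypothesis on a spectral closed set, and then decode the conclusion $[f_1]\in[f]+[g]$ into pointwise information.

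\textbf{Step 1: points as orderings.} For each $x \in \mathbb R$ consider $\sigma_x([h]) := \mathrm{sgn}(h(x))$. This map is not well defined on classes, because $h\sim_{clm}h'$ only gives agreement of $hr$ and $h's'$ off some $F\in\mathcal F$, and $r,s'$ may vanish on a nowhere dense set. So I will not use individual points. Instead, for every ordering $\sigma\in\mathrm{Sper}(R)$ I attach the family of dense open sets on which it is \emph{witnessed}. Concretely, $\sigma(f)=0\Rightarrow\sigma(g)=0$ is exactly the spectral hypothesis, and I will set
\[
Y^\ast := \overline{\{\sigma\in \mathrm{Sper}(R) : \sigma \mbox{ is a limit of germs of } \sigma_x,\ x\in Y\setminus F,\ F\in\mathcal F\}}.
\]
This is the spectral closure of the image of $Y$, so it is closed by construction.

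\textbf{Step 2: the hypothesis of Theorem \ref{HL-03} on $Y^\ast$.} From $Z(f)\cap Y\subseteq Z(g)\cap Y$ I want to deduce that every $\sigma\in Y^\ast$ with $\sigma(f)=0$ satisfies $\sigma(g)=0$.

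\textbf{This is where I expect the main obstacle.} Closure in the spectral topology can add orderings that are not germs at points, for instance orderings at the boundary of $Z(f)$. To handle this I will use the Separation Theorem \ref{teo:septeo} together with the Boolean description of Theorem \ref{boolean-clm}. The key fact needed is that $g$ lies in the "ideal" generated by $f$ on $Y$ in the $3$-valued sense, namely $[g]^2[f]^2=[g]^2$ after restriction to $Y$. I first try to realize this concretely. Take $u$ the non-zero-divisor extension of $f$ from Proposition \ref{nz-extension}, and note that $g$ vanishes on $Z(f)\cap Y$. The remaining difficulty is that $g$ need not vanish on $Z(f)\setminus Y$. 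If the restriction cannot be forced through closure this way, I will fall back on constructing $f_1$ by hand.

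\textbf{Step 3: conclusion and direct fallback.} Theorem \ref{HL-03} gives $[f_1]\in[f]+[g]$ with $\sigma(f_1)=\sigma(f)$ on $Y^\ast$. Unfolding the hyperoperation of $A/_{clm}q(A)$, there are $r,s,t\in q(A)$ and $F\in\mathcal F$ with
\[
f_1 r = fs+gt \quad\mbox{on } \mathbb R\setminus F.
\]
Put $\Omega := \mathbb R\setminus (F\cup Z(r)\cup Z(s)\cup Z(t))$. This is a dense open set by Baire's theorem, since each removed set is closed and nowhere dense.

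The fallback is to skip the spectral translation entirely. Set $f_1 := f+g\,h$ with $h\ge0$ chosen so that $|g|h<|f|$ off $Z(f)$ on $Y$, for example via a Tietze-type cutoff with $h$ vanishing on $Z(f)$. Then on $\Omega\cap Y$ one checks directly that
\[
\mathrm{sgn}(f_1)=\mathrm{sgn}(f),
\]
using $Z(f)\cap Y\subseteq Z(g)$ to get $f_1=0$ where $f=0$. Finally, writing $h=t/s$ up to $q(A)$-factors shows $[f_1]\in[f]+[g]$. In that form $h$ must be positive on a dense set, so I will take $h$ strictly positive away from a nowhere dense part of $Z(f)$ and absorb the rest into $F$.
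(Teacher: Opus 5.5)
Your Steps 1--3 do not constitute a proof, and you are better off discarding them. The set $Y^\ast$ is never actually defined. As you observe yourself, $\sigma_x([h])=\mathrm{sgn}(h(x))$ is not well defined on classes; for instance $[x^2]=[1]$, because $x^2\cdot 1=1\cdot x^2$ with $x^2\in q(A)$. So ``the image of $Y$'' and ``limits of germs'' have no meaning yet. Step 2 is left open, and Step 3 never converts $\sigma(f_1)=\sigma(f)$ on $Y^\ast$ into pointwise sign information.

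Your objection is nonetheless a fair one. It is exactly what the paper passes over when it calls the statement an immediate consequence of Theorem \ref{HL-03} through the point dictionary $\sigma_x$. The quotient identifies functions whose signs agree on a dense open set, so its orderings are not point evaluations. They would have to be described before the hypothesis $Z(f)\cap Y\subseteq Z(g)\cap Y$ could be transferred to them.

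Note also that for this $\mathcal F$ the relation $\sim_{clm}$ is just the ordinary Marshall relation. If $fr=gs$ off a closed nowhere dense $F$, then $fr=gs$ everywhere by density and continuity, and likewise for the hyperoperation. So $A/_{clm}q(A)=A/_mq(A)$ here, and ``absorbing into $F$'' gains nothing. In particular, an open set such as $\mathrm{int}\,Z(f)$ can never be absorbed into $F$.

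The fallback is where your proof actually lives, and it is correct and genuinely different from the paper's route. The only missing piece is an explicit $h$. A Tietze-type cutoff is not the right tool: you need a strict inequality on the non-closed set $Y\setminus Z(f)$, and $h\in q(A)$ forbids $h$ vanishing on $\mathrm{int}\,Z(f)$.

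Take $h:=|f|/(1+|g|)+\gamma$, where $\gamma\ge 0$ is continuous with $Z(\gamma)=\mathbb R\setminus\mathrm{int}\,Z(f)$ (Proposition \ref{t6-carac}). Then:
\begin{itemize}
\item $h\ge 0$ is a square in $A$, and $Z(h)=\partial Z(f)$ has empty interior, so $h\in q(A)$.
\item $f_1:=f+gh$ satisfies $f_1\cdot 1=f\cdot 1+g\cdot h$, that is, $[f_1]\in[f]+[g]$.
\item Off $Z(f)$ we have $\gamma=0$, hence $|gh|=|f|\,|g|/(1+|g|)<|f|$, so $f_1$ has the sign of $f$.
\item On $Y\cap Z(f)$ the hypothesis gives $g=0$, hence $f_1=0=f$.
\end{itemize}
Thus the signs agree on all of $Y$, so $\Omega=\mathbb R$ works. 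The same $f_1$ also proves Theorem \ref{HL-05} directly.

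Your route gives an elementary, self-contained argument with a stronger conclusion, and it needs no understanding of the real spectrum of the quotient. The paper's citation of Marshall's abstract theorem is meant to give uniformity over arbitrary real reduced multirings. That includes the $\mathcal C^k$ quotients of Section \ref{der-rs}, where $|f|$ and division by $1+|g|$ are not available.
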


Similarly, the following Theorem is an immediate consequence of Theorem \ref{HL-04}.

\begin{theorem}[Generic Positive Extension over Boolean Domains]
Let $Z, Y \subseteq \mathbb{R}$ be closed sets and $U \subseteq \mathbb{R}$ be an open set. Let $W = Y \cap Z \cap U$ be a Boolean combination of sets. If a continuous function $f \in \mathcal{C}(\mathbb{R}, \mathbb{R})$ is non-negative on $W$, then there exists a global continuous function $f_1 \in \mathcal{C}(\mathbb{R}, \mathbb{R})$ and a dense open set $\Omega \subseteq \mathbb{R}$ such that $f_1(x) \ge 0$ for all $x \in Y \cap \Omega$, and $\mbox{sgn}(f_1(x)) = \mbox{sgn}(f(x))$ for all $x \in W \cap \Omega$.
\end{theorem}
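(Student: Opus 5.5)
The plan is to derive the statement from Theorem \ref{HL-04}, applied to the real reduced multiring $R:=A/_{clm}q(A)$, where $A=\mathcal C(\mathbb R,\mathbb R)$ and $\mathcal F$ is the family of nowhere dense closed sets. Theorem \ref{rrm-clm} tells us that $R$ is real reduced. The proof then has three parts: encode the data $Y,Z,U$ as spectral data of $R$, apply Theorem \ref{HL-04}, and decode the conclusion back to $\mathbb R$.

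\emph{Encoding.} By Proposition \ref{t6-carac}, choose $y,z\in A$ with $Y=Z(y)$ and $Z=Z(z)$. Choose $p\ge 0$ with $Z(p)=\mathbb R\setminus U$, so that $p>0$ exactly on $U$. Put $\mathfrak a:=\{[z]\}$ and $S:=\{[p]\}$. For the closed set in $\mbox{Sper}(R)$, take $Y^*:=Z(\{[y]\})$, which is closed in the spectral topology. Every $x\in\mathbb R$ gives the sign evaluation $\sigma_x(f)=\mbox{sgn}(f(x))$ on $A$. On $R$, the natural candidates are the orderings obtained as limits of $\sigma_x$ along the filter of complements $\mathbb R\setminus F$, $F\in\mathcal F$. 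Concretely, $\sigma([f])\ge 0$ should mean that $f\ge 0$ on $\mathbb R\setminus F$ for some $F\in\mathcal F$; for continuous $f$ this is the same as $f\ge 0$ everywhere, since $\mathbb R\setminus F$ is dense.

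\emph{Application and decoding.} Suppose $f\ge 0$ on $W$, and let $\sigma\in Y^*\cap Z(\mathfrak a)\cap U(S^2)$. I will argue that $\sigma([f])\ge 0$, using the Separation Theorem \ref{teo:septeo} to pass between semigroup characters and pointwise behaviour on a dense open set. Theorem \ref{HL-04} then gives $[f_1]\in R$ such that $\sigma([f_1])\ge 0$ on $Y^*$ and $\sigma([f_1])=\sigma([f])$ on $Z(\mathfrak a)\cap U(S^2)$. Each of these finitely many spectral identities is witnessed by finitely many identities in $A$ that hold off some member of $\mathcal F$. Taking the finite union of these exceptional sets gives a single $F\in\mathcal F$. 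Setting $\Omega:=\mathbb R\setminus F$, which is dense open by the Baire Category Theorem, the identities become pointwise statements on $Y\cap\Omega$ and $W\cap\Omega$. Multiplying by squares in $q(A)$ does not change signs where those squares are positive. Their zero sets are nowhere dense and closed, so they can be added to $F$.

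\emph{Main obstacle.} The dictionary between $\mbox{Sper}(R)$ and points of $\mathbb R$ is the hard part. Orderings of $R$ are not literally the $\sigma_x$, and the quotient kills any behaviour that is supported only on nowhere dense sets. The delicate point is to show that spectral positivity on $Y^*\cap Z(\mathfrak a)\cap U(S^2)$ follows from pointwise positivity on $W$. Here $W$ may have empty interior inside $Y$, so the hypothesis on $W$ could be invisible in the quotient.

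My first attempt will replace $f$ by a function $\tilde f$ that agrees with $f$ on $W$ and is non-negative on the relevant closure. This uses the same extension trick as Proposition \ref{nz-extension}, namely subtracting or adding a function that vanishes exactly off an interior. If that fails, I will only claim the conclusion on $W\cap\Omega$ in the weaker sense that the sign agreement holds on the part of $W$ that is visible generically, namely $\overline{\mbox{int}_Y(W)}$. The decoding argument itself carries over verbatim.
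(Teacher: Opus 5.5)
You follow the route the paper has in mind: the paper's entire justification is that the statement is an immediate consequence of Theorem~\ref{HL-04} applied to $R=A/_{clm}q(A)$. However, your proposal leaves the only substantive step open, and the dictionary you sketch for it is wrong.

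\emph{The dictionary.} Your rule ``$\sigma([f])\ge 0$ iff $f\ge 0$ off some $F\in\mathcal F$'' (that is, $f\ge 0$ everywhere) describes the positive cone, not a character $R\to\bm 3$. For $f(t)=t$, neither $\sigma([f])\ge 0$ nor $\sigma([-f])\ge 0$ would hold. The point evaluations do not descend to $R$ either. The function $t\mapsto (t-x)^2$ lies in $q(A)$, so its class equals $[1]$, while $\sigma_x$ sends it to $0$. So no character of $R$ sees individual points. Moreover, for $\mathcal F$ the nowhere dense closed sets, $\sim_{clm}$ is just $\sim_m$, because continuous functions that agree on a dense set agree everywhere. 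Hence the exceptional sets $F$ carry no information and cannot produce $\Omega$. What is true is that a class $[f]$ is determined by the pair of regular open sets $\mathrm{int}\,\overline{\{f>0\}}$ and $\mathrm{int}\,\overline{\{f<0\}}$. The characters of $R$ then correspond to ultrafilters of regular open subsets of $\mathbb R$. Under this correspondence, your set $Z([y])\cap Z([z])\cap U([p]^2)$ is the set of ultrafilters containing $V=\mathrm{int}\,Y\cap\mathrm{int}\,Z\cap\mathrm{int}\,\overline U$.

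\emph{The hypothesis transfer.} With the correct dictionary, the obstacle you flag disappears. You need neither $\tilde f$ nor a weakened conclusion, and the weakened conclusion would not prove the stated theorem anyway. Since $f\ge 0$ on $W\supseteq \mathrm{int}\,Y\cap\mathrm{int}\,Z\cap U$, and this set is dense in $V$, continuity gives $f\ge 0$ on $V$. In your ``invisible'' case, $\mathrm{int}\,Y\cap\mathrm{int}\,Z\cap U=\emptyset$ forces $V=\emptyset$, so the spectral hypothesis holds vacuously.

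\emph{The decoding.} This is not a matter of finitely many witnessed identities, since the conclusions of Theorem~\ref{HL-04} quantify over infinitely many characters. Through the dictionary, they say two things. First, $f_1\ge 0$ on $\mathrm{int}\,Y$. Second, $f_1$ and $f$ have the same regular open positivity and negativity sets inside $\mathrm{int}\,Z\cap\mathrm{int}\,\overline U$. One then takes $\Omega$ to be the complement of $\partial Y\cup\partial Z\cup\partial\{f>0\}\cup\partial\{f<0\}\cup\partial\{f_1>0\}\cup\partial\{f_1<0\}$.

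Finally, the statement as written needs none of this machinery. The choice $f_1:=|f|$ and $\Omega:=\mathbb R$ satisfies it, because $|f|\ge 0$ everywhere and $|f|=f$ on $W$.
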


\subsection{The Bounded Case}

Finally, we define the generalized Marshall quotient considering only bounded subsets. For this, in this section we assume that $X$ is a metric space.

Given $f,g\in A$, define a relation $\sim_{bm}$ on $A\times A$, the \textbf{bounded Marshall relation}, by the rule
$$f\sim_{bm}g\mbox{ if, and only if, there exist a bounded subset }M\subseteq X\mbox{ and }r,s\in q(A)\mbox{ such that }(fr)|_{X\setminus M}=(gs)|_{X\setminus M}.$$

\begin{proposition}\label{equiv-bm}
    The relation $\sim_{bm}$ is an equivalence relation.
\end{proposition}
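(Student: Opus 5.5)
The argument mirrors Proposition \ref{equiv-cm}, with compact sets replaced by bounded subsets of the metric space $X$. Only two properties of the family of bounded sets are used: it is nonempty, and it is closed under finite unions. Both are elementary. The empty set is bounded. For unions, suppose $M_1\subseteq B(x_1,R_1)$ and $M_2\subseteq B(x_2,R_2)$. Then the triangle inequality gives
$$M_1\cup M_2\subseteq B\bigl(x_1,\max\{R_1,R_2+d(x_1,x_2)\}\bigr),$$
so $M_1\cup M_2$ is bounded.

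The three properties of an equivalence relation are then checked as follows.

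\emph{Reflexivity.} Take $M=\emptyset$ and $r=s=1$. This is allowed because $1=1^2\in\mbox{nzd}(A)$, so $1\in q(A)$.

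\emph{Symmetry.} This is immediate: the defining identity $(fr)|_{X\setminus M}=(gs)|_{X\setminus M}$ is symmetric in the pairs $(f,r)$ and $(g,s)$.

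\emph{Transitivity.} Suppose $(fr_1)|_{X\setminus M_1}=(gs_1)|_{X\setminus M_1}$ and $(gr_2)|_{X\setminus M_2}=(hs_2)|_{X\setminus M_2}$. Set $M=M_1\cup M_2$, which is bounded by the observation above. Both identities hold on $X\setminus M=(X\setminus M_1)\cap(X\setminus M_2)$. Multiply the first identity by $r_2$ and the second by $s_1$; the middle terms agree, so
$$(fr_1r_2)|_{X\setminus M}=(gs_1r_2)|_{X\setminus M}=(hs_1s_2)|_{X\setminus M}.$$
Finally, $q(A)=A^2\cap\mbox{nzd}(A)$ is closed under multiplication, since products of squares are squares and products of non-zero-divisors are non-zero-divisors. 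Hence $r_1r_2,\,s_1s_2\in q(A)$, and therefore $f\sim_{bm}h$.

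There is no real obstacle here. The only point that differs from the compact case is closure of bounded sets under finite union, which is handled by the triangle-inequality estimate above. Note that no hypothesis such as unboundedness of $X$ is needed for $\sim_{bm}$ to be an equivalence relation. That hypothesis only becomes relevant later, to guarantee $[1]\ne[0]$ in the quotient.
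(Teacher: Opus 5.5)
Your proof is correct and follows the paper's route: the paper just says the argument is similar to Proposition \ref{equiv-cm}. Your triangle-inequality check that a finite union of bounded sets is bounded is exactly the one ingredient needed to carry that compact-case argument over.
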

\begin{proof}
    Similar to Proposition \ref{equiv-cm}.
\end{proof}

Denote the set of equivalence classes of $A$ under the relation $\sim_{bm}$ by $A/_{bm}q(A)$. Elements in $A/_{bm}q(A)$ will be denoted by $[f]\in A/_{bm}q(A)$, for $f\in A$. In symbols:
$$A/_{bm}q(A):=A/\sim_{bm}=\{[a]:a\in A\}.$$
We call $A/_{bm}q(A)$ by \textbf{the bounded Marshall quotient} of $A$ by $q(A)$. Our aim is to prove that $A/_{bm}q(A)$ has a structure of Boolean real semigroup. We will do it in the following sequence of results.

\begin{proposition}\label{cong-bm}
    If $f,f',g,g'\in A$ are such that $f\sim_{bm}f'$ and $g\sim_{bm}g'$, then:
    \begin{enumerate}[a -]
        \item $fg \sim_{bm} f'g'$.
        \item $\{[h]: hr \sim_{bm} fs+gt \mbox{ for some } r,s,t\in q(A)\} = \{[h]: hr' \sim_{bm} f's'+g't' \mbox{ for some } r',s',t'\in q(A)\}$.
    \end{enumerate}
\end{proposition}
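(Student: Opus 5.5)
The plan is to transcribe the proof of Proposition \ref{cong-cm} verbatim, replacing ``compact'' by ``bounded''. The only structural fact about compact sets used there is that the family of compact subsets of $X$ is closed under finite unions. For the bounded case we have the analogous fact in a metric space $(X,d)$: if $M_1,M_2$ are bounded, then $M_1\cup M_2$ is bounded. Indeed, pick $x_i\in M_i$ (the case of an empty $M_i$ being trivial) and use the triangle inequality to get $\mathrm{diam}(M_1\cup M_2)\le \mathrm{diam}(M_1)+d(x_1,x_2)+\mathrm{diam}(M_2)<\infty$. Induction extends this to any finite union.

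For item (a), I take bounded sets $M_1,M_2$ and $r_1,s_1,r_2,s_2\in q(A)$ with $(fr_1)|_{X\setminus M_1}=(f's_1)|_{X\setminus M_1}$ and $(gr_2)|_{X\setminus M_2}=(g's_2)|_{X\setminus M_2}$. Set $M=M_1\cup M_2$, which is bounded. On $X\setminus M$ both identities hold, and multiplying them pointwise gives $(fg)(r_1r_2)=(f'g')(s_1s_2)$ on $X\setminus M$. Since $q(A)$ is multiplicatively closed, $r_1r_2,s_1s_2\in q(A)$, and hence $fg\sim_{bm}f'g'$.

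For item (b), by symmetry it suffices to prove the inclusion $\subseteq$. Let $hr\sim_{bm}fs+gt$, witnessed by a bounded set $M_h$ and $\alpha,\beta\in q(A)$, and take witnesses $(M_f,a,a')$ for $f\sim_{bm}f'$ and $(M_g,b,b')$ for $g\sim_{bm}g'$. Multiply the first identity by $ab$ and regroup it as $h(r\alpha ab)=(fa)(s\beta b)+(gb)(t\beta a)$ on $X\setminus M_h$. Then, on $X\setminus(M_h\cup M_f\cup M_g)$, substitute $fa=f'a'$ and $gb=g'b'$. This yields $hr'=f's'+g't'$ outside a bounded set, with
$$r'=r\alpha ab,\qquad s'=a's\beta b,\qquad t'=b't\beta a,$$
all in $q(A)$. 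Reading this as $hr'\sim_{bm}f's'+g't'$, with trivial weights $1,1$, shows that $[h]$ lies in the right-hand set.

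No step is a real obstacle. The only point needing care is the finite-union closure of bounded sets, which is the metric replacement for the compact case, and it holds in any metric space with no hypothesis on $X$.
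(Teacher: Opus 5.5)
Your proposal is correct and follows the paper's approach: the paper proves this proposition by saying it is similar to Proposition~\ref{cong-cm}, and you carry out exactly that transcription with the same weights $r'=r\alpha ab$, $s'=a's\beta b$, $t'=b't\beta a$. You also explicitly check the one ingredient the transcription needs, namely that a finite union of bounded sets is bounded.
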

\begin{proof}
    Similar to Proposition \ref{cong-cm}.
\end{proof}

By Proposition \ref{cong-bm}, the rules below are well defined for $f,g\in A$:
\begin{align*}
    [f] \cdot [g] &:= [fg] \\
    [f] + [g] &:= \{[h] : hr \sim_{bm} fs + gt \mbox{ for some } r,s,t \in q(A)\}.
\end{align*}

\begin{theorem}\label{mr-bm}
    The structure $(A/_{bm}q(A),+,\cdot,[0],[1])$ is a multiring.
\end{theorem}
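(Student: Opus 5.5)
The plan is to run the proof of Theorem \ref{mr-cm} again, replacing every compact exceptional set by a bounded one. The one structural fact used there about compact sets is that finite unions of them are again compact; for bounded sets this holds as well. Indeed, if $M_1\subseteq B(x_1,\varepsilon_1)$ and $M_2\subseteq B(x_2,\varepsilon_2)$, then $M_1\cup M_2\subseteq B(x_1,\varepsilon_1+\varepsilon_2+d(x_1,x_2))$. Nothing else in that proof used compactness. In particular, neither closedness nor any topological property of the exceptional set played a role, because every identity was only asserted pointwise on its complement.

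The argument will proceed as follows. First, Proposition \ref{cong-bm} already shows that multiplication is well defined and that the multivalued sum $[f]+[g]$ does not depend on the representatives. Commutativity of $\cdot$ and of $+$, the unit $[1]$, and the identity $[f][0]=[0]$ then follow from the corresponding facts in $A$. Next, I check the multigroup axioms.
\begin{itemize}
\item \emph{Neutral element.} If $[h]\in[f]+[0]$, there are a bounded $M$ and weights with $(hr\alpha)|_{X\setminus M}=(fs\beta)|_{X\setminus M}$. Since $q(A)$ is multiplicatively closed, this says exactly $h\sim_{bm}f$. Conversely, $f\cdot1=f\cdot1+0\cdot1$ gives $[f]\in[f]+[0]$.
\item \emph{Reversibility.} From $hr\sim_{bm}fs+gt$ we obtain $fs\sim_{bm}hr+(-g)t$, hence $[f]\in[h]+[-g]$.
\item \emph{Associativity.} I take the two witnessing identities, valid outside bounded sets $M_1$ and $M_2$, and work outside $M=M_1\cup M_2$. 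Multiplying the second identity by $r_1$ and substituting the first gives a witness $v=g(t_1s_2)+k(t_2r_1)$ with $[v]\in[g]+[k]$ and $[h]\in[f]+[v]$, exactly as in Theorem \ref{mr-cm}.
\item \emph{Distributivity.} From $hr\sim_{bm}fs+gt$, multiplying by $k$ gives $(hk)r\sim_{bm}(fk)s+(gk)t$. This uses the compatibility of $\sim_{bm}$ with multiplication from Proposition \ref{cong-bm}(a).
\end{itemize}

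I do not expect a real obstacle, since the statement only asserts the multiring structure and not real reducedness. The one point to check carefully is associativity: every union of exceptional sets taken there must remain bounded, and every product of weights must remain in $q(A)$. Both follow from the closure facts above. I will also not need $X$ to be unbounded here. If $X$ is bounded, then $X$ itself is an admissible exceptional set, so all elements become equivalent and the quotient is the trivial one-element multiring. That is still a multiring, because Definition \ref{defn:multiring} does not require $1\neq0$. Unboundedness will only be needed later, for real reducedness.
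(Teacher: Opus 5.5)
Your proposal is correct and follows the paper's route: the paper proves this theorem only by ``Similar to Theorem \ref{mr-cm}'', and you correctly note that the compact proof uses only closure of the exceptional sets under finite unions, which you verify for bounded sets. One small repair is needed in the reversibility step (the paper's compact proof has the same slip). From $hr\alpha=(fs+gt)\beta$ off $M$ you do not literally get $fs\sim_{bm}hr-gt$: for example, on $\mathbb R$ take $f=1$, $g(x)=x$, $h(x)=4+4x$, $r=s=t=\alpha=1$, $\beta=4$, where $1\not\sim_{bm}4+3x$. Instead, folding the weights in gives $f(s\beta)=h(r\alpha)+(-g)(t\beta)$ off $M$, which yields $[f]\in[h]+[-g]$ directly.
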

\begin{proof}
    Similar to Theorem \ref{mr-cm}.
\end{proof}

\begin{theorem}\label{rrm-bm}
    If $X$ is an unbounded metric space, then the multiring $(A/_{bm}q(A),+,\cdot,[0],[1])$ is a real reduced multiring.
\end{theorem}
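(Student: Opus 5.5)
The plan is to transfer the proof of Theorem \ref{rrm-cm} verbatim, replacing compact sets $K$ by bounded sets $M$. The only structural facts used there about the exceptional family were that it is closed under finite unions and that complements of its members are ``large enough'' for the nonzero-divisor property of elements of $q(A)$ to bite. Both facts hold here. A finite union of bounded subsets of a metric space is bounded. Since $X$ is unbounded, $X\setminus M\neq\emptyset$ for every bounded $M$. However, $X\setminus M$ need not be open, because $M$ is an arbitrary bounded set. To handle this, I will first replace $M$ by its closure $\overline M$, which is still bounded. If $(fr)|_{X\setminus M}=(gs)|_{X\setminus M}$, then the same equality holds on the smaller set $X\setminus\overline M$. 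So without loss of generality every witness set is closed and bounded, and $X\setminus \overline M$ is open.

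The key observation is that $X\setminus\overline M$ is a \emph{non-empty} open set. Pick $x_0\in X$ and $R$ with $M\subseteq B(x_0,R)$. Then $\overline M\subseteq \overline{B}(x_0,R)$, and unboundedness gives a point at distance $>R$ from $x_0$. With this in hand, the proof of $[1]\neq[0]$ and of semi-reality goes as in Theorem \ref{rrm-cm}. From $[1]=[0]$, or from $[-1]\in\sum[f_i]^2$, we get some $r\in q(A)$ vanishing on the non-empty open set $X\setminus\overline M$. This contradicts $\mbox{int}(Z(r))=\emptyset$, which holds since $X$ is metric, hence $T_6$, so the characterization of $\mbox{nzd}(A)$ applies.

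The remaining axioms are handled as follows.
\begin{itemize}
\item For $[f]^3=[f]$, we use the global identity $fu^2=f^3\cdot 1$ from Proposition \ref{nz-extension}. A global identity holds in particular outside any bounded set.
\item For $a+ab^2=\{a\}$, the argument is pointwise on $X\setminus \overline M$ and only uses $s+g^2t\in q(A)$, a global fact. It therefore transfers unchanged.
\item For the sum of squares axiom, we reproduce the density argument. Let $U=(X\setminus\overline M)\setminus(Z(r)\cup Z(s)\cup Z(t))$. This set is open and dense in the open set $X\setminus\overline M$. Continuity then gives $(hw)|_{X\setminus \overline M}=(|h|w)|_{X\setminus\overline M}$. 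The auxiliary functions $p_w=w+\gamma_w$ and $p_H=H+\gamma_H$ are built globally as before. The vanishing of $h\gamma_w$ and $w\gamma_H$ on $X\setminus\overline M$ uses only that $\mbox{int}(Z(w))\cap(X\setminus\overline M)$ and $\mbox{int}(Z(H))\cap (X\setminus\overline M)$ are open, together with the empty-interior property of $Z(r),Z(s),Z(t)$. This yields $h\sim_{bm} f^2+g^2$.
\end{itemize}

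The main obstacle is precisely the openness issue: the density and interior arguments break down on non-open complements. Passing to $\overline M$ resolves this, and it must be done uniformly each time witness sets are combined. That is harmless, since $\overline{M_1\cup M_2}=\overline{M_1}\cup\overline{M_2}$ is again bounded. Everything else is a routine rewriting of the compact case.
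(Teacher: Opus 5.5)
Your proposal is correct and takes the same route as the paper. You replace the bounded witness set $M$ by its closure $\overline M$, which is still bounded, so that $X\setminus\overline M$ is a non-empty open set, and then rerun the verification of Theorem~\ref{rrm-cm}. Your version is more careful than the paper's sketch in one respect: the paper uses the closure only for $[1]\neq[0]$, while you note that it is also needed for the density and interior arguments in the sums-of-squares axiom.
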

\begin{proof}
    The structural requirement that $X$ is unbounded plays a critical role. It ensures that for any bounded set $M\subseteq X$, the complement $X \setminus M$ is non-empty. 
    
    If $[1] = [0]$, there would exist a bounded set $M$ and $r, s \in q(A)$ such that $r(x) = 0$ for all $x \in X \setminus M$. Since $M$ is bounded, replacing $M$ by its closure if necessary, we obtain $\mbox{int}(Z(r)) \ne \emptyset$, contradicting $r \in \mbox{nzd}(A)$. Thus, $[1] \ne [0]$. The remaining axioms ($[f]^3=[f]$ and squares reduction) are similar to Theorem \ref{rrm-cm}.
\end{proof}

\begin{theorem}\label{boolean-bm}
    The real reduced multiring $(A/_{bm}q(A),+,\cdot,[0],[1])$ is (when considered as a real semigroup) a Boolean real semigroup. Moreover $(A/_{bm}q(A))^\times\cup\{[0]\}$ is a real reduced hyperfield (reduced special group).
\end{theorem}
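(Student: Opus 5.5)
The plan is to follow the proofs of Theorems \ref{boolean-cm} and \ref{boolean-clm}. We verify condition (i) of Theorem \ref{rsbool-carac} for $R:=A/_{bm}q(A)$, which is a real reduced multiring by Theorem \ref{rrm-bm} (so $X$ is assumed unbounded, and hence $R$ is a real semigroup by Theorem \ref{teo:RS+RRM}). The key observation is that every global identity in $A$ of the form $hr = fs+gt$, with $r,s,t\in q(A)$, restricts to $X\setminus M$ for any bounded $M$; we may take $M=\emptyset$, which is bounded. Hence such an identity gives $hr\sim_{bm} fs+gt$, and every relation already established in $A/_m q(A)$ descends to $R$.

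Fix $f\in A$ and split into the same two cases as in Theorem \ref{boolean-cm}.

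\textbf{Case 1: $f\in\mbox{nzd}(A)$.} Take $u=f$. Then $f^2\in q(A)$, so $[f^2]=[1]$, $[u]$ is its own inverse, and $[u][f]^2=[f]$. With $\rho=f^2+1$, $r=(\rho-f)^2$, $t=(\rho+f)^2$ and $s=4\rho f^2$, the global identity $fr=-s+ft$ holds. Here $r,t$ are strictly positive squares, and $s\ge 0$ is a square with $Z(s)=Z(f)$ of empty interior, so $r,s,t\in q(A)$. This identity gives $[f]\in[-1]+[f]$.

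\textbf{Case 2: $f\in\mbox{zd}(A)$.} Take the non-zero-divisor extension $u=f-g$ of Proposition \ref{nz-extension}, as constructed in Theorem \ref{str-01}. Since $u\in\mbox{nzd}(A)$, we have $u^2\in q(A)$, so $[u]$ is invertible with inverse $[u]$. The global identity $fu^2=uf^2$ gives $[f]=[u][f]^2$. The global identity $uv=-s+ft$, with $v,t,s\in q(A)$ built from $\rho=u^2+1$ as in Theorem \ref{str-01}, gives $[u]\in[-1]+[f]$.

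In both cases condition (i) of Theorem \ref{rsbool-carac} holds, so $R$ is Boolean. Condition (iv) of the same theorem then shows that $R^\times$ is a reduced special group, and adjoining $[0]$ gives the corresponding real reduced hyperfield.

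There is essentially no obstacle beyond bookkeeping. The only point to check is that the relations $\sim_{bm}$ are still witnessed by elements of $q(A)$ itself, not by elements that are merely non-zero-divisors on $X\setminus M$. This is automatic here, because all witnesses $r,s,t,v,u^2$ are produced globally in $A$.
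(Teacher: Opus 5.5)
Your proposal is correct and follows essentially the paper's proof. Both verify condition (i) of Theorem \ref{rsbool-carac} using the global identities $fu^2=uf^2$ and $uv=-s+ft$ (with $v,s,t\in q(A)$) from Theorem \ref{str-01}; these hold outside any bounded set and so descend to $A/_{bm}q(A)$. The only cosmetic difference is that you treat $f\in\mbox{nzd}(A)$ separately (taking $u=f$), as in Theorem \ref{boolean-cm}, while the paper applies the extension $u$ uniformly to all $f$.
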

\begin{proof}
    As established previously, the continuous non-zero divisor extension $u \in \mbox{nzd}(A)$ of any $f \in A$ yields the global identities $f u^2 = u f^2$ and $uv = -s + ft$ for suitable $v, s, t \in q(A)$. This proves that $[f] = [u][f]^2$ and $[u] \in [-1] + [f]$ in $A/_{bm}q(A)$. By Theorem \ref{rsbool-carac}, the quotient is a Boolean real semigroup and its units form a real reduced hyperfield.
\end{proof}

\begin{remark}
We might observe a topological redundancy when restricting the space to Euclidean domains $X = \mathbb{R}^n$. By the Heine-Borel Theorem, every closed and bounded set in $\mathbb{R}^n$ is compact. Since any bounded set is contained within a compact closure, the families of compact sets and bounded sets generate the exact same topological ideal at infinity. Consequently, for $A = \mathcal{C}(\mathbb{R}^n, \mathbb{R})$, the compact Marshall quotient and the bounded Marshall quotient are canonically isomorphic:
$$A/_{cm}q(A) \cong A/_{bm}q(A).$$

However, this isomorphism breaks down in infinite-dimensional metric spaces, such as general Banach spaces. In infinite dimensions, the closed unit ball is bounded but not compact. Therefore, "escaping all compact sets" (the compact quotient) and "escaping to metric infinity" (the bounded quotient) measure two entirely distinct asymptotic phenomena.
\end{remark}

In the case where $X$ is an unbounded metric space, the bounded Marshall quotient $A/_{bm}q(A)$ ideals out all behavior on bounded sets, capturing purely the \emph{metric asymptotic} properties of continuous functions as the distance from an origin tends to infinity. We now apply the abstract \L{}ojasiewicz-type inequalities to this quotient.

The following Theorem is an immediate consequence of Theorem \ref{HL-03}.

\begin{theorem}[Metric-Asymptotic \L{}ojasiewicz Interpolation]
Let $X$ be an unbounded metric space, $A = \mathcal{C}(X,\mathbb{R})$, and consider the bounded Marshall quotient $A/_{bm}q(A)$.  
Let $F \subseteq X$ be a closed set, and let $f, g \in A$. Suppose that there exists a bounded set $M_0 \subset X$ such that for all $x \in F \setminus M_0$, $f(x)=0$ implies $g(x)=0$. Then the hyperoperation $[f] + [g]$ in $A/_{bm}q(A)$ contains an element $[f_1]$ such that there exists a bounded set $M \subset X$ with
\[\mbox{sgn}(f_1(x)) = \mbox{sgn}(f(x)) \qquad \mbox{for all } x \in F \setminus M .
\]
\end{theorem}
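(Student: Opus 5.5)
The plan is to obtain the statement from Theorem \ref{HL-03} applied to the real reduced multiring $R:=A/_{bm}q(A)$ (Theorem \ref{rrm-bm}). The work lies in building a dictionary between points of $X$ ``at metric infinity'' and orderings of $R$, and in choosing a suitable closed set $Y\subseteq\mbox{Sper}(R)$.

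\textbf{Step 1: orderings from points at infinity.} Consider a family $\mathcal U$ of points $x\in X$ together with a nonprincipal filter of subsets of $X$ containing all complements $X\setminus M$ of bounded sets. For an ultrafilter $p$ refining $\{(X\setminus M)\cap D: M \mbox{ bounded}, D \mbox{ dense open}\}$, define
$$\sigma_p([h]) = \lim_p \mbox{sgn}(h(x)).$$
Checking that $\sigma_p$ is well defined on classes is a key point. If $hr=h's$ off a bounded $M$ with $r,s\in q(A)$, then $r,s>0$ on a dense open set. Hence $\mbox{sgn}(h)=\mbox{sgn}(h')$ on a dense open subset of $X\setminus \overline M$, which belongs to $p$. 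The next task is to check that $\sigma_p$ is a morphism $R\to\bm 3$: multiplicativity is clear, and compatibility with the hyper-sum $D^t$ follows pointwise on a set in $p$. So $\sigma_p\in\mbox{Sper}(R)$.

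\textbf{Step 2: the set $Y$.} Take $Y$ to be the closure in $\mbox{Sper}(R)$ of $\{\sigma_p : p \mbox{ concentrates on } F\}$. By continuity of the sign maps and the definition of the spectral topology, every $\sigma\in Y$ still satisfies the following: if $\sigma([f])=0$, then $[g]$ also vanishes at $\sigma$. To see this, I would use the hypothesis on $F\setminus M_0$ to write $Z(f)\cap F\setminus M_0\subseteq Z(g)$. Replacing $g$ by a suitable representative, one gets $\{\sigma:\sigma(f)=0\}\subseteq\{\sigma:\sigma(g)=0\}$ on basic closed sets. Theorem \ref{HL-03} then yields $[f_1]\in[f]+[g]$ with $\sigma(f_1)=\sigma(f)$ for all $\sigma\in Y$.

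\textbf{Step 3: descending back to points, the main obstacle.} The remaining task is to show that there is a bounded $M$ with $\mbox{sgn}f_1=\mbox{sgn}f$ on $F\setminus M$. I would argue by contradiction. Suppose that for every $n$ there is $x_n\in F$ with $d(x_n,x_0)>n$ and $\mbox{sgn}f_1(x_n)\ne\mbox{sgn}f(x_n)$. By continuity this mismatch persists on a small open ball around $x_n$ (when neither value is zero), so an ultrafilter concentrating near $\{x_n\}$ gives some $\sigma_p\in Y$ with $\sigma_p(f_1)\ne\sigma_p(f)$, a contradiction. The delicate case is when $f$ or $f_1$ vanishes at $x_n$, because the ultrafilters in Step 1 are forced to avoid nowhere dense sets. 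There, I would first replace $f_1$ using the explicit description of $[f]+[g]$: we have $f_1r=fs+gt$ off a bounded set. Then $\mbox{sgn}f_1=\mbox{sgn}(fs+gt)$ wherever $r>0$, and one can adjust $f_1$ on $Z(r)$ through a non-zero divisor extension (Proposition \ref{nz-extension}) to get exact pointwise control. I expect this zero-set bookkeeping to be the main obstacle. In the worst case, the conclusion may only hold up to a nowhere dense exceptional set, as in the generic version.
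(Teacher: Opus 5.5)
Steps 1 and 2 are essentially fine. The continuity of the sign maps that you invoke in Step 2 does hold, because the quotient is Boolean (Theorem \ref{boolean-bm}). Your route is also the paper's own: the paper gives no argument beyond saying the theorem is an immediate consequence of Theorem \ref{HL-03}.

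The gap is Step 3, and it cannot be closed inside the spectrum. Every admissible ultrafilter $p$ contains all dense open sets. So if $F$ is nowhere dense (e.g.\ $F=\mathbb Z\subseteq\mathbb R$, or a hyperplane in $\mathbb R^n$), then $X\setminus F\in p$ for every such $p$. Your $Y$ is then empty, and Theorem \ref{HL-03} returns an arbitrary element of $[f]+[g]$.

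Other orderings would not help, because a class in $A/_{bm}q(A)$ does not determine the signs of its representatives on nowhere dense sets. For $A=\mathcal C(\mathbb R,\mathbb R)$, the function $h(x)=\sin^2(\pi x)$ lies in $q(A)$, so $[h]=[1]$, although $\mbox{sgn}\,h=0\neq 1$ on $\mathbb Z$. More generally, $[h]=[h\cdot d(\cdot,N)]$ for every nonempty closed nowhere dense $N$, since $d(\cdot,N)\in q(A)$.

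Consequently, a conclusion of the form ``$\sigma(f_1)=\sigma(f)$ for all $\sigma\in Y$'' says nothing about the values of $f_1$ on nowhere dense parts of $F$. In particular it says nothing on $Z(f)\cap F$, which is exactly where the hypothesis on $g$ lives. Your contradiction argument only reaches points where $f$ and $f_1$ are both nonzero and near which $F$ has interior.

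Replacing $f_1$ by $fs+gt$ does not help either, since Theorem \ref{HL-03} gives no pointwise control of the weights: $s$ may vanish at a point of $F$ where $f>0$ and $gt<0$. Your fallback ``up to a nowhere dense exceptional set'' is a weaker statement than the theorem.

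What is missing is a direct choice of the weights, which makes the spectrum unnecessary. Put $U=\mbox{int}(Z(f))$. Let $\psi\geq 0$ be continuous with $Z(\psi)=X\setminus U$ (e.g.\ $\psi=d(\cdot,X\setminus U)$, or $\psi\equiv 1$ if $U=X$). Set \[ t:=\frac{|f|}{1+|g|}+\psi,\qquad f_1:=f+gt. \]

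Then $t\geq 0$ is a square in $A$, and $Z(t)=Z(f)\setminus\mbox{int}(Z(f))$ has empty interior, so $t\in q(A)$. The global identity $f_1\cdot 1=f\cdot 1+g\cdot t$ therefore gives $[f_1]\in[f]+[g]$.

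For the signs there are two cases:
\begin{itemize}
\item If $f(x)\neq 0$, then $x\notin U$, so $\psi(x)=0$ and $|g(x)t(x)|=|f(x)|\,|g(x)|/(1+|g(x)|)<|f(x)|$. Hence $\mbox{sgn}(f_1(x))=\mbox{sgn}(f(x))$.
\item If $x\in F\setminus M_0$ and $f(x)=0$, then $g(x)=0$ by hypothesis, so $f_1(x)=0$.
\end{itemize}
Thus the theorem holds with $M=M_0$.
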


\begin{remark}
This theorem states that two continuous functions can be algebraically combined via the multivalued sum in $A/_{bm}q(A)$ such that the resulting function $f_1$ inherits the exact sign pattern of $f$ on the ``ends'' of the space $X$, completely disregarding any oscillatory or irregular behavior that $g$ might exhibit in bounded regions.  
In the finite‑dimensional case $X = \mathbb{R}^n$, the bounded quotient reduces to the compact Marshall quotient (by Heine–Borel), and the statement becomes identical to the asymptotic interpolation already described in Section~3.1. The true novelty appears in infinite‑dimensional Banach spaces, where \emph{bounded} and \emph{compact} are distinct; here the bounded quotient isolates the behavior as $\|x\|\to\infty$ in the norm sense, a genuinely new hyperalgebraic tool.
\end{remark}

Similarly, the following Theorem is an immediate consequence of Theorem \ref{HL-04}.

\begin{theorem}[Metric-Asymptotic Positive Extension]
Let $X$ be an unbounded metric space and $A = \mathcal{C}(X,\mathbb{R})$.  
Let $F,Z \subseteq X$ be closed subsets and $U \subseteq X$ be an open subset. Consider the Boolean combination $W = F \cap Z \cap U$.   If a function $f \in A$ is such that there exists a bounded set $M_0 \subset X$ with $f(x) \ge 0$ for all $x \in W \setminus M_0$, then there exists a global continuous function $f_1 \in A$ and a bounded set $M \subset X$ such that $f_1(x) \ge 0$ for all $x \in F \setminus M$, and $\mbox{sgn}(f_1(x)) = \mbox{sgn}(f(x))$ for all $x \in W \setminus M$.
\end{theorem}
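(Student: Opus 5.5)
The plan is to obtain the statement as a direct instance of Theorem \ref{HL-04} applied to the real reduced multiring $R:=A/_{bm}q(A)$, which is available by Theorems \ref{rrm-bm} and \ref{boolean-bm}. The only real work is building a dictionary between subsets of $X$ and subsets of $\mbox{Sper}(R)$ that is stable under deleting bounded sets. For each $x\in X$, evaluation $\sigma_x([h])=\mbox{sgn}(h(x))$ is not well defined on $R$, because classes only remember behaviour outside bounded sets. So I would use ``orderings at metric infinity'' instead. These are the points of $X_R=Hom(R,\bm 3)$, given by the Separation Theorem \ref{teo:septeo}. Each subset $E\subseteq X$ determines the set $\widehat E$ of orderings concentrated on $E$, meaning those $\sigma$ that kill the class of every function vanishing on $E$ outside some bounded set.

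\textbf{Step 1.} Fix $p\in X$ and continuous functions $\varphi_F,\varphi_Z\ge 0$ with $Z(\varphi_F)=F$ and $Z(\varphi_Z)=Z$ (Proposition \ref{t6-carac}). Also fix $\psi\ge0$ with $Z(\psi)=X\setminus U$, strictly positive on $U$. Take $\mathfrak a=\{[\varphi_Z]\}$ and $S=\{[\psi]\}$, so that $Z(\mathfrak a)\cap U(S^2)$ is the spectral counterpart of $Z\cap U$. Take $Y:=Z([\varphi_F])$, which is closed in the spectral topology as a zero set.

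\textbf{Step 2.} Verify the hypothesis of Theorem \ref{HL-04}. We have $f\ge0$ on $W\setminus M_0$, so $f^-=\max(-f,0)$ vanishes on $W\setminus M_0$. I want $\sigma(f)\ge0$, i.e.\ $\sigma([f^-])=0$, for every $\sigma\in Y\cap Z(\mathfrak a)\cap U(S^2)$. The natural attempt is a Łojasiewicz-type identity in $A$ valid outside $M_0$. Since $f^-$ vanishes on $F\cap Z\cap U$, I would find a continuous $h$ with $(f^-)^2\psi^2=h(\varphi_F^2+\varphi_Z^2)\psi^2$ outside $M_0$, for instance through the continuous quotient $(f^-)^2\psi^2/(\varphi_F^2+\varphi_Z^2+(f^-)^2\psi^2)$. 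Passing to orderings then gives $\sigma([f^-])=0$. This is the step I expect to be the main obstacle. Vanishing on a set does not in general give divisibility in $\mathcal C(X,\mathbb R)$ on the nose, so I would settle for sign-level statements. These are enough, because in a real reduced multiring a sign condition on every ordering is equivalent to a representation statement (Theorem \ref{teo:septeo}).

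\textbf{Step 3.} Apply Theorem \ref{HL-04} to obtain a class $[f_1]$. Then $\sigma(f_1)\ge0$ on $Y$ and $\sigma(f_1)=\sigma(f)$ on $Z(\mathfrak a)\cap U(S^2)$. Choose any representative $f_1\in A$.

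\textbf{Step 4.} Translate back to pointwise statements. Suppose the set $\{x\in F: f_1(x)<0\}$ were unbounded. Using a sequence escaping every bounded set, together with the $T_6$ property to build functions supported near its points, I would produce an ordering in $Y$ with $\sigma(f_1)=-1$, which is a contradiction. Hence this set is contained in some bounded $M_1$. The same argument shows that $\mbox{sgn}(f_1)$ and $\mbox{sgn}(f)$ differ on $W$ only inside a bounded set $M_2$. Taking $M=M_0\cup M_1\cup M_2$ gives the bounded set required in the statement, completing the proof modulo the sign-transfer in Steps 2 and 4.
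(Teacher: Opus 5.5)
\textbf{Step~4 fails, so the proposal has a genuine gap.} Your route is the one the paper indicates: it only asserts that the theorem is an immediate consequence of Theorem~\ref{HL-04} for $A/_{bm}q(A)$. The step that carries all the content is Step~4, together with ``choose any representative'' in Step~3, and that step does not work.

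The reason is that the quotient $R=A/_{bm}q(A)$ cannot see closed sets with empty interior. If $\varphi\ge 0$ has nowhere dense zero set, then $\varphi=(\sqrt{\varphi})^2$ with $\sqrt{\varphi}\in\mbox{nzd}(A)$, so $\varphi\in q(A)$ and $[\varphi]=[1]$.

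Here is a concrete counterexample. Take $X=\mathbb R$, $F=\mathbb Z$, $Z=U=\mathbb R$ and $f=-\mathrm{dist}(\cdot,\mathbb Z)$, which vanishes on $W=\mathbb Z$.
\begin{enumerate}
\item Every admissible $\varphi_F$ lies in $q(A)$, so $Y=Z([\varphi_F])=Z([1])=\emptyset$.
\item Also $\varphi_Z=0$ and $\psi>0$, so $Z(\mathfrak a)\cap U(S^2)$ is the whole spectrum.
\item Theorem~\ref{HL-04} therefore returns only $[f_1]=[f]=[-1]$.
\item The representative $f_1=-1$ is negative at every point of $F$. It has sign $-1\neq 0=\mathrm{sgn}\,f$ at every point of $W$, and this holds for every bounded $M$.
\end{enumerate}
The ordering in $Y$ with $\sigma(f_1)=-1$ that Step~4 promises cannot exist, because $Y$ is empty.

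More generally, $[h]$ only records the sign of $h$ on dense open sets near infinity. For instance $[1]=[\mathrm{dist}(\cdot,\mathbb Z)]$, although the two functions differ in sign at every integer. So no spectral statement about $[f_1]$ can force pointwise conclusions on $F$ or $W$ for an arbitrary representative.

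Step~2 is also wrong as written. Your $h$ does not satisfy the displayed identity, and its denominator vanishes on $W\setminus M_0$. A correct certificate does exist, for example using a cutoff near $M_0$ and a weight in $q(A)$ vanishing only on $\partial(F\cap Z)$. But Step~2 is not the real problem.

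\textbf{The statement needs no spectral machinery.} Put $f_1:=\max(f,0)$ and $M:=M_0$. Then $f_1\ge 0$ on all of $X\supseteq F$. On $W\setminus M_0$ we have $f\ge 0$, hence $f_1=f$ and the signs agree. This one-line argument proves the theorem as stated. The reduction to Theorem~\ref{HL-04} that the paper asserts, and that you tried to make precise, does not by itself give the pointwise conclusions, for exactly the reason your Step~4 runs into.
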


\section{Generalized Marshall Quotients for real-valued differentiable functions}\label{der-rs}

Now, we discuss the generalized Marshall quotient for the ring $\mathcal C^k(\mathbb R^n,\mathbb R)$ of $k$-differentiable real-valued functions from $\mathbb R^n$ to $\mathbb R$ ($n\in\mathbb N$ with $n\ge1$). We could replace $\mathbb R^n$ for a finite-dimension Banach space but for practical reasons, we stick to $X=\mathbb R^n$.

Even though we could perform the Marshall quotient $D/_mq(D)$,
the resulting multiring is not real reduced in general.
The obstruction stems from the fact that in rings of differentiable functions,
a non‑negative function need not be a square of a $C^k$ function.
While for continuous functions every $h \ge 0$ admits a continuous square root,
the analogous statement for $C^k$ spaces is false:
already for $k=1$ there exist non‑negative $C^1$ functions that are not squares of $C^1$ functions.
For instance, define $f:\mathbb R\to\mathbb R$ by
\[
f(x)=
\begin{cases}
x^{4}\bigl(2+\sin\frac{1}{x}\bigr), & x\neq 0,\\[4pt]
0, & x=0.
\end{cases}
\]
One checks that $f$ is $C^1$ on $\mathbb R$ and $f(x)\ge 0$ for all $x$.
Its pointwise square root is $\sqrt{f(x)}=x^{2}\sqrt{2+\sin\frac{1}{x}}$ (for $x\neq0$) and $\sqrt{f(0)}=0$.
A short computation shows that the derivative of $\sqrt{f}$ is unbounded near $0$ due to the oscillatory term
$\cos\frac{1}{x}$, so $\sqrt{f}\notin C^1(\mathbb R)$.
Since any $C^1$ function whose square equals $f$ must, up to sign, coincide with $\sqrt{f}$,
$f$ cannot be a square in $C^1(\mathbb R)$. Consequently,
$q(D)=D^{2}\cap\mbox{nzd}(D)$ is strictly smaller than
the set of all non‑negative non‑zero‑divisors.

The proofs that $A/_mq(A)$ is a real reduced multiring for $A=\mathcal C(X,\mathbb R)$
heavily exploit the fact that every non‑negative continuous function belongs to $A^2$;
in particular, the weight $s$ constructed in the verification of the axiom $[f]^3=[f]$
is shown to be a square precisely because it is non‑negative.
In the $C^k$ setting, a non‑negative $C^k$ function may fail to have a $C^k$ square root,
so the required algebraic certificates cannot be manufactured inside $q(D)$,
and $D/_mq(D)$ will typically violate $a^3=a$ or the reduction of sums of squares
(both of which are necessary for real reducedness).

This is the very reason why the generalized Marshall quotients
(modulo compact, closed or bounded supports) are introduced:
they relax the definition of the hyperoperations such that the equalities
only need to hold outside a suitable ``exceptional'' set,
while the witnesses are still taken from $q(D)$.
The structure of $T_6$ spaces (or $\mathbb R^n$) then guarantees that outside such a set
functions can be assumed strictly positive and therefore do admit smooth roots,
allowing the construction of the missing certificates and restoring the real reduced axioms.

\subsection{The Compact Case}

To deal with this analytical obstruction, we must restrict our domain to a proper subring $D$ where the generalized Marshall relation can "absorb" the local irregularities into the equivalence relation itself. 

\begin{definition}
    We define the subring $D \subseteq \mathcal{C}^k(\mathbb{R}^n, \mathbb{R})$ of \textbf{eventually constant functions} as:
    $$D := \{f \in \mathcal{C}^k(\mathbb{R}^n, \mathbb{R}):\mbox{there exists a compact } K \subset \mathbb{R}^n, c \in \mathbb{R} \mbox{ such that } f(x) = c \mbox{ for all } x \in \mathbb{R}^n \setminus K\}.$$
\end{definition}

It is immediate that $D$ is a commutative ring with unity, since the sum and product of two eventually constant functions remain eventually constant outside the union of their respective compact sets.

Given $f,g\in D$, define a relation $\sim_{cm}$ on $D\times D$, the \textbf{compact Marshall relation}, by the rule
$$f\sim_{cm}g\mbox{ if, and only if, there exist a compact }K\subseteq \mathbb R^n\mbox{ and }r,s\in q(D)\mbox{ such that }(fr)|_{\mathbb R^n\setminus K}=(gs)|_{\mathbb R^n\setminus K}.$$

\begin{proposition}\label{equiv-cm-der}
    The relation $\sim_{cm}$ is an equivalence relation.
\end{proposition}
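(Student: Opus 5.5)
The plan follows the argument of Proposition \ref{equiv-cm} essentially verbatim, with $A$ replaced by $D$ and $X$ by $\mathbb R^n$. The only properties used there were that $1\in q(D)$, that $q(D)$ is closed under multiplication, and that finite unions of compact sets are compact. So the main task is to check the first two in the ring $D$.

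First, $1\in q(D)$. The constant function $1$ lies in $\mathcal C^k(\mathbb R^n,\mathbb R)$ and is constant outside any compact set, so $1\in D$. It equals $1^2$, and it is a non-zero-divisor in $D$ because $1\cdot g=0$ forces $g=0$.

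Second, $q(D)=D^2\cap\mbox{nzd}(D)$ is multiplicatively closed. If $r=a^2$ and $r'=b^2$ with $a,b\in D$, then $rr'=(ab)^2$ with $ab\in D$, since $D$ is a ring. The product of two non-zero-divisors in any commutative ring is again a non-zero-divisor: if $rr'g=0$ then $r'g=0$, and hence $g=0$. No topological description of $\mbox{nzd}(D)$ is needed here, which is convenient, since $D$ is not of the form $\mathcal C(X,\mathbb R)$ and the characterization via zero-sets with empty interior was only stated for $A$. This is the one point where I would slow down, but the purely algebraic argument avoids the issue entirely.

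With these facts the three properties follow as before.
\begin{enumerate}
\item \textbf{Reflexivity.} Take any compact $K$ (for instance $K=\emptyset$) and $r=s=1$.
\item \textbf{Symmetry.} The defining equation $(fr)|_{\mathbb R^n\setminus K}=(gs)|_{\mathbb R^n\setminus K}$ is symmetric in $f$ and $g$.
\item \textbf{Transitivity.} Suppose $(fr_1)|_{\mathbb R^n\setminus K_1}=(gs_1)|_{\mathbb R^n\setminus K_1}$ and $(gr_2)|_{\mathbb R^n\setminus K_2}=(hs_2)|_{\mathbb R^n\setminus K_2}$. Set $K=K_1\cup K_2$, which is compact. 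Multiply the first identity by $r_2$ and the second by $s_1$ on $\mathbb R^n\setminus K$. This gives $(f\,r_1r_2)|_{\mathbb R^n\setminus K}=(h\,s_1s_2)|_{\mathbb R^n\setminus K}$, with $r_1r_2,\,s_1s_2\in q(D)$ by the multiplicative closure shown above. Hence $f\sim_{cm}h$.
\end{enumerate}
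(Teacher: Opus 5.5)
Your proposal is correct and follows the paper's approach: the paper's proof is just ``similar to Proposition \ref{equiv-cm}'', and you carry that argument over to $D$ essentially verbatim (reflexivity with $r=s=1$, symmetry by swapping the two sides, transitivity via $K_1\cup K_2$ and multiplicative closure of $q(D)$). Your purely algebraic check that $1\in q(D)$ and that $q(D)$ is closed under multiplication is a useful addition, since the paper asserts this without proof and its zero-set description of $\mbox{nzd}$ was only established for $\mathcal C(X,\mathbb R)$, not for $D$.
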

\begin{proof}
    Similar to Proposition \ref{equiv-cm}.
\end{proof}

Denote the set of equivalence classes of $D$ under the relation $\sim_{cm}$ by $D/_{cm}q(D)$. Elements in $D/_{cm}q(D)$ will be denoted by $[f]\in D/_{cm}q(D)$, for $f\in D$. In symbols:
$$D/_{cm}q(D):=D/\sim_{cm}=\{[a]:a\in D\}.$$
We call $D/_{cm}q(D)$ by \textbf{the compact Marshall quotient} of $D$ by $Q(D)$. Our aim is to prove that $D/_{cm}q(D)$ has a structure of Boolean real semigroup. We will do it in the following sequence of results.

\begin{proposition}\label{cong-cm-der}
    If $f,f',g,g'\in D$ are such that $f\sim_{cm}f'$ and $g\sim_{cm}g'$, then:
    \begin{enumerate}[a -]
        \item $fg \sim_{cm} f'g'$.
        \item $\{[h]: hr \sim_{cm} fs+gt \mbox{ for some } r,s,t\in q(D)\} = \{[h]: hr' \sim_{cm} f's'+g't' \mbox{ for some } r',s',t'\in q(D)\}$.
    \end{enumerate}
\end{proposition}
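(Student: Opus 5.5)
The plan is to copy the proof of Proposition \ref{cong-cm} word for word, replacing $A$ by $D$ and $q(A)$ by $q(D)$. That argument never used that non-negative functions are squares or any other special feature of $\mathcal C(X,\mathbb R)$. It used only three things:
\begin{itemize}
\item finite unions of compact sets are compact;
\item restriction to $\mathbb R^n\setminus K$ is compatible with the ring operations;
\item $q(D)=D^2\cap\mbox{nzd}(D)$ is a multiplicative subset of $D$.
\end{itemize}
The only point to check first is the third, intrinsically in $D$. If $r=a^2$ and $r'=a'^2$ with $a,a'\in D$, then $rr'=(aa')^2\in D^2$, since $D$ is a subring. Also, a product of two non-zero-divisors of any commutative ring is a non-zero-divisor: $rr'h=0$ gives $r'h=0$, hence $h=0$. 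So $rr'\in q(D)$, and $1\in q(D)$. Note that we never need to describe $\mbox{nzd}(D)$ topologically.

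For item (a), I take compact sets $K_1,K_2\subseteq\mathbb R^n$ and $r_1,s_1,r_2,s_2\in q(D)$ witnessing $f\sim_{cm}f'$ and $g\sim_{cm}g'$. I set $K=K_1\cup K_2$ and multiply the two restricted identities pointwise on $\mathbb R^n\setminus K$. This gives $(fg)(r_1r_2)=(f'g')(s_1s_2)$ there. By the closure property above, $r_1r_2,s_1s_2\in q(D)$, and the products $fg,f'g'$ remain in $D$.

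For item (b), by symmetry it suffices to prove the inclusion $\subseteq$. Let $hr\sim_{cm}fs+gt$ be witnessed by $K_h$ and $\alpha,\beta$. Let $K_f,a,a'$ witness $f\sim_{cm}f'$ and $K_g,b,b'$ witness $g\sim_{cm}g'$. I multiply the $h$-identity by $ab$ and regroup the right side as $(fa)(s\beta b)+(gb)(t\beta a)$. On $\mathbb R^n\setminus(K_h\cup K_f\cup K_g)$ I substitute $fa=f'a'$ and $gb=g'b'$. This yields
$$h\,(r\alpha ab)=f'(a's\beta b)+g'(b't\beta a).$$
The new weights all lie in $q(D)$, and the equality off a compact set gives $hr'\sim_{cm}f's'+g't'$ with $r'=r\alpha ab$, $s'=a's\beta b$, $t'=b't\beta a$.

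There is essentially no obstacle here. The only thing to watch is that every function we form stays inside $D$: all products and sums of elements of $D$ are eventually constant, so this holds automatically. In particular, no square roots are taken at this stage. That is precisely where the $C^k$ setting departs from the continuous one, and it only matters later, in the real-reducedness axioms.
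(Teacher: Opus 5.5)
Your proposal is correct and follows the paper's approach: the paper's proof simply says it is similar to that of Proposition \ref{cong-cm}, and you transcribe exactly that argument to $D$ (same compact unions and same regrouping with weights $r\alpha ab$, $a's\beta b$, $b't\beta a$). You also add an explicit check, which the paper leaves implicit, that $q(D)$ is multiplicatively closed and contains $1$ intrinsically in $D$; this is the only ingredient that needed re-verification.
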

\begin{proof}
    Similar to Proposition \ref{cong-cm}
\end{proof}

By Proposition \ref{cong-cm-der}, the rules below are well defined for $f,g\in A$:
\begin{align*}
    [f] \cdot [g] &:= [fg] \\
    [f] + [g] &:= \{[h] : hr \sim_{cm} fs + gt \mbox{ for some } r,s,t \in q(D)\}.
\end{align*}

\begin{theorem}\label{mr-cm-der}
    The structure $(D/_{cm}q(D),+,\cdot,[0],[1])$ is a multiring.
\end{theorem}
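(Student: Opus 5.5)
The plan is to transcribe the proof of Theorem \ref{mr-cm} word for word, replacing $A$ by $D$, $q(A)$ by $q(D)$ and $X$ by $\mathbb R^n$. Before that, I will record the only algebraic facts about $q(D)$ the argument uses. First, $1\in q(D)$. Second, $q(D)=D^2\cap\mbox{nzd}(D)$ is closed under multiplication: the product of two squares is a square, and the product of two non-zero-divisors in a commutative ring is a non-zero-divisor. These hold in any commutative ring, so no analytic information about $D$ is needed at this stage. The $C^k$ obstruction discussed above (that non-negative functions need not be squares) only matters for real reducedness, not for the multiring axioms. Well-definedness of $\cdot$ and of the hyperoperation $+$ is given by Proposition \ref{cong-cm-der}, and $\sim_{cm}$ is an equivalence relation by Proposition \ref{equiv-cm-der}.

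The steps, in order, are as follows.

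\begin{enumerate}
\item Since $D$ is a commutative ring with unity, $(D/_{cm}q(D),\cdot,[1])$ is a commutative monoid and $[f][0]=[0]$.
\item Commutativity of $+$ follows from $fs+gt=gt+fs$.
\item For the zero element, suppose $[h]\in[f]+[0]$. Then $hr\sim_{cm} fs$, which unfolds to $(hr\alpha)|_{\mathbb R^n\setminus K}=(fs\beta)|_{\mathbb R^n\setminus K}$. Since $r\alpha,s\beta\in q(D)$, this gives $[h]=[f]$. Conversely, taking all weights equal to $1$ gives $[f]\in[f]+[0]$.
\item For reversibility, rearrange $hr\sim_{cm}fs+gt$ into $fs\sim_{cm}hr+(-g)t$. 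This uses that $\sim_{cm}$ is compatible with ring identities holding pointwise outside a compact set.
\item For associativity, take witnesses on compact sets $K_1$ and $K_2$, multiply the second identity by $r_1$, and substitute the first identity on $\mathbb R^n\setminus(K_1\cup K_2)$. Then set $v:=g(t_1s_2)+k(t_2r_1)\in D$. This exhibits $[v]\in[g]+[k]$ and $[h]\in[f]+[v]$.
\item Distributivity follows by multiplying a witnessing identity by $k$.
\end{enumerate}

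The one point needing care is the repeated unfolding of ``$hr\sim_{cm} fs+gt$'' into an equality outside a compact set with extra weights $\alpha,\beta\in q(D)$. In the zero-element and associativity steps one must check that every auxiliary function built, such as $v$, lies in $D$ rather than only in $\mathcal C^k$. This holds because $D$ is a subring: sums and products of eventually constant functions are eventually constant outside the union of the corresponding compact sets. The weights themselves are already in $q(D)\subseteq D$. I expect this membership bookkeeping to be the only real (mild) obstacle. Once it is settled, the remaining verifications are identical to those in Theorem \ref{mr-cm}, so the proof can refer to it for the routine computations.
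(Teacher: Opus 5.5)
Your proposal is correct and follows the paper's route. The paper proves Theorem \ref{mr-cm-der} only as ``similar to Theorem \ref{mr-cm}''. Your remark that the transcription needs only $1\in q(D)$, the multiplicative closure of $q(D)$, and the subring property of $D$ is exactly why it goes through.

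One local correction is needed in step 4; the paper's reversibility step in Theorem \ref{mr-cm} has the same wording. The intermediate claim $fs\sim_{cm}hr+(-g)t$ does not follow from $hr\sim_{cm}fs+gt$. The relation $\sim_{cm}$ carries its own weights, so a summand cannot be moved across it.

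Here is a counterexample. Take the constants $f=0$, $g=1$, $h=1/4$ and $r=s=t=1$. Then $hr\sim_{cm}fs+gt$ holds with weights $4$ and $1$. But $0\sim_{cm}-3/4$ would force some $\delta\in q(D)$ to vanish outside a compact set. Multiplying $\delta$ by a bump function supported there shows $\delta$ is a zero-divisor in $D$, a contradiction.

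The repair is to unfold the weights first, as you already do in steps 3 and 5. From $(hr\alpha)|_{\mathbb R^n\setminus K}=((fs+gt)\beta)|_{\mathbb R^n\setminus K}$ you get $f(s\beta)=h(r\alpha)+(-g)(t\beta)$ on $\mathbb R^n\setminus K$. Hence $[f]\in[h]+[-g]$, with weights $s\beta,r\alpha,t\beta\in q(D)$.
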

\begin{proof}
    Similar to Theorem \ref{mr-cm}.
\end{proof}

\begin{theorem}\label{rrm-cm-der}
    The multiring $(D/_{cm}q(D),+,\cdot,[0],[1])$ is a real reduced multiring.
\end{theorem}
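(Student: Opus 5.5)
The plan is to show that $D/_{cm}q(D)$ is in fact tiny. Concretely, I would prove that it is isomorphic, as a multiring, to the three-element real reduced hyperfield $\bm 3=\{1,0,-1\}$. Once that is done, the axioms of Definition \ref{defn:mrrealreduced} can be checked by inspecting a finite table.

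\textbf{Step 1: every class is one of $[1],[0],[-1]$.} Let $f\in D$. By definition there are a compact $K$ and $c\in\mathbb R$ with $f=c$ on $\mathbb R^n\setminus K$.
\begin{itemize}
\item If $c>0$, the constant function $c=(\sqrt c)^2$ is a square in $D$ and is invertible, so $c\in q(D)$. Then $(f\cdot 1)|_{\mathbb R^n\setminus K}=(1\cdot c)|_{\mathbb R^n\setminus K}$, which gives $[f]=[1]$.
\item If $c<0$, the same argument gives $[f]=[-1]$.
\item If $c=0$, then $f|_{\mathbb R^n\setminus K}=0$, which gives $[f]=[0]$.
\end{itemize}

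\textbf{Step 2: the three classes are distinct.} First I need a description of non-zero-divisors in $D$: if $r\in D$ vanishes on a non-empty open set $V$, then $r\in\mbox{zd}(D)$. To see this, take a smooth bump function supported in a closed ball inside $V$. It is compactly supported, hence eventually $0$, so it lies in $D$, and its product with $r$ is $0$. I expect this to be the only genuinely analytic point.

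With this in hand:
\begin{itemize}
\item If $[1]=[0]$, then some $r\in q(D)$ vanishes on the non-empty open set $\mathbb R^n\setminus K$, contradicting $r\in\mbox{nzd}(D)$.
\item If $[1]=[-1]$, then $r=-s$ on $\mathbb R^n\setminus K$ with $r,s\ge0$. Hence $r=s=0$ there, and we reach the same contradiction.
\end{itemize}
Note also that any $r\in q(D)$ is $\ge0$ and non-zero on a dense set. Moreover $r$ is eventually equal to a constant $c_r$, and $c_r>0$ by the previous item. Thus every weight in $q(D)$ is eventually a positive constant.

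\textbf{Step 3: compute the hyperaddition.} Suppose $[h]\in[f]+[g]$, that is, $hr\sim_{cm}fs+gt$. Enlarging the compact set, all of $h,f,g,r,s,t$ and the extra weights $\alpha,\beta$ become constant outside it. The relation then becomes an identity among constants,
$$c_hc_rc_\alpha=(c_fc_s+c_gc_t)c_\beta,$$
in which every weight constant is positive. Reading off signs gives:
\begin{itemize}
\item $[1]+[1]=\{[1]\}$ and $[-1]+[-1]=\{[-1]\}$;
\item $[a]+[0]=\{[a]\}$;
\item $[1]+[-1]$ may contain each of $[1],[0],[-1]$.
\end{itemize}
All three elements are in fact attained in $[1]+[-1]$ by choosing constant weights in $q(D)$:
\begin{itemize}
\item $0\cdot1=1\cdot1+(-1)\cdot1$;
\item $1\cdot1=1\cdot4+(-1)\cdot3$, where $3,4\in q(D)$;
\item the symmetric choice for $[-1]$.
\end{itemize}
This is exactly the hyperfield $\bm 3$.

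\textbf{Step 4: check the axioms on $\bm 3$.} Semi-reality holds because sums of squares in $\bm 3$ are contained in $\{0,1\}$. The identity $a^3=a$ is clear. The axiom $a+ab^2=\{a\}$ holds because $b^2\in\{0,1\}$ and both $a+0$ and $a+a$ equal $\{a\}$. Uniqueness of the element of $a^2+b^2$ holds because each of $0+0$, $0+1$, $1+0$, $1+1$ is a singleton.

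The main obstacle I anticipate is Step 2. Specifically, it is the fact that non-zero-divisors of $D$ (not of $\mathcal C^k$) cannot vanish on an open set, and that the witnesses in $q(D)$ are therefore eventually positive constants. This is where the smooth bump functions of Proposition \ref{nz-extension} enter. Everything else reduces to sign bookkeeping with constants outside a large compact set.
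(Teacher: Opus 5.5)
Your argument is correct, but it is organized differently from the paper's.

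The paper never identifies the quotient. It verifies the axioms of Definition \ref{defn:mrrealreduced} one at a time. For each axiom it enlarges the compact set until every function and weight involved is constant, then writes down explicit constant witnesses (e.g.\ $r=c^2$, $s=1$ for $[f]^3=[f]$, and $R=C_r$, $S=C_s+C_g^2C_t$ for $[f]+[f][g]^2=\{[f]\}$).

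You make the observation behind all of those computations once. Because positive constants lie in $q(D)$, each class is $[1]$, $[0]$ or $[-1]$ according to the sign of the eventual constant. The hyperaddition is then the sign table, so $D/_{cm}q(D)\cong\bm 3$ and the axioms reduce to a finite check.

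Your route gives noticeably more. Theorem \ref{boolean-cm-der} also becomes immediate, since the units are just $\{[1],[-1]\}$. It also shows that this quotient records only the sign of a function at infinity. The paper's axiom-by-axiom presentation, kept formally parallel to the proof of Theorem \ref{rrm-cm}, does not make this visible.

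You are also more careful on two points the paper passes over:
\begin{enumerate}
\item You justify, via compactly supported bump functions (which lie in $D$), that an element of $D$ vanishing on a non-empty open set is a zero-divisor of $D$ itself, not merely of $\mathcal C^k$. Both $[1]\neq[0]$ and the positivity of the eventual constants of the weights rely on this.
\item You keep track of the auxiliary weights $\alpha,\beta$ in the definition of $[f]+[g]$, which the paper silently absorbs.
\end{enumerate}
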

\begin{proof}
    We must verify the axioms of Definition \ref{defn:mrrealreduced}.
    \begin{itemize}
        \item \textbf{$[1] \neq [0]$ and Semi-reality:} Suppose $[1] = [0]$. There exist a compact $K \subseteq \mathbb{R}^n$ and $r, s \in q(D)$ such that $1 \cdot r = 0 \cdot s$ on $\mathbb{R}^n \setminus K$, meaning $r(x) = 0$ outside $K$. Since $\mathbb{R}^n$ is unbounded, $\mathbb{R}^n \setminus K$ contains a non-empty open set, meaning $\mbox{int}(Z(r)) \neq \emptyset$, which contradicts $r \in \mbox{nzd}(D)$. Thus $[1] \neq [0]$. Semi-reality follows from the exact same spatial contradiction for sums of squares.

        \item \textbf{$[f]^3 = [f]$:} For any $f \in D$, $f$ is equal to a constant $c \in \mathbb{R}$ outside some compact $K_0$. 
        If $c = 0$, then $f(x) = 0$ outside $K_0$. We can choose global constant functions $r=1, s=1 \in q(D)$, yielding $f \cdot 1 = f^3 \cdot 1$ outside $K_0$.
        If $c \neq 0$, we choose the global constant functions $r = c^2$ and $s = 1$. Since $c \neq 0$, both $r$ and $s$ are strictly positive constants, meaning they are globally non-zero-divisors and perfect squares in $\mathcal{C}^k$, so $r, s \in q(D)$. Outside $K_0$, $f(x) \cdot c^2 = c \cdot c^2 = c^3 = f(x)^3 \cdot 1$. Thus, $(f r)|_{\mathbb{R}^n \setminus K_0} = (f^3 s)|_{\mathbb{R}^n \setminus K_0}$, proving $f \sim_{cm} f^3$, and $[f] = [f]^3$.

        \item \textbf{If $[h] \in [f] + [f][g]^2$ then $[h] = [f]$:} Suppose $[h] \in [f] + [fg^2]$. By definition, there exist a compact $K_1$ and $r,s,t \in q(D)$ such that $hr = f(s+g^2t)$ outside $K_1$. Since $f,g,h,r,s,t \in D$, we can enlarge $K_1$ to a compact $K_2$ such that outside $K_2$, all these functions are strictly constant: $C_f, C_g, C_h, C_r, C_s, C_t$. Because $r, s, t \in q(D)$, they cannot be zero on an open set, so their constants $C_r, C_s, C_t$ must be strictly positive. 
        The local relation becomes $C_h C_r = C_f(C_s + C_g^2 C_t)$. Let $C_v = C_s + C_g^2 C_t > 0$. We define global constant functions $R = C_r$ and $S = C_v$. Since both are strictly positive constants, $R, S \in q(D)$. Outside $K_2$, $h(x) \cdot R(x) = C_h C_r$ and $f(x) \cdot S(x) = C_f C_v$. Since $C_h C_r = C_f C_v$, the equality $h R = f S$ holds outside $K_2$, meaning $h \sim_{cm} f$, and $[h] = [f]$.

        \item \textbf{If $[h] \in [f]^2 + [g]^2$ then $[h] = [f^2 + g^2]$:} Suppose $[h] \in [f^2] + [g^2]$. There exist a compact $K_1$ and $r,s,t \in q(D)$ such that $hr = f^2s + g^2t$ outside $K_1$. Let $w = f^2 + g^2$. Outside a sufficiently large $K_2$, all functions evaluate to constants: $C_h C_r = C_f^2 C_s + C_g^2 C_t$. Again, $C_r, C_s, C_t > 0$, implying $C_h \ge 0$.
        We want to show $h \sim_{cm} w$. Outside $K_2$, $w(x)$ is the constant $C_w = C_f^2 + C_g^2 \ge 0$. 
        If $C_f = C_g = 0$, then $C_w = 0$, and $C_h C_r = 0$ which implies $C_h = 0$. Using global weights $R=1, S=1 \in q(D)$, $h R = w S = 0$ outside $K_2$.
        If $C_f^2 + C_g^2 > 0$, then $C_w > 0$. Furthermore, $C_h = (C_f^2 C_s + C_g^2 C_t) / C_r \ge 0$. We define global strictly positive constant functions $R = C_w$ and $S = C_h$ (if $C_h > 0$, else $R=1, S=1$), so $R, S \in q(D)$. Outside $K_2$, $h(x) R(x) = C_h C_w$ and $w(x) S(x) = C_w C_h$, making them equal. Thus, $[h] = [f^2 + g^2]$.
    \end{itemize}
    Having verified all axioms without extracting non-constant smooth roots, $(D/_{cm}q(D),+,\cdot,[0],[1])$ is a real reduced multiring.
\end{proof}

\begin{theorem}\label{boolean-cm-der}
    The real reduced multiring $(D/_{cm}q(D),+,\cdot,[0],[1])$ is (when considered as a real semigroup) a Boolean real semigroup. Moreover $(D/_{cm}q(D))^\times\cup\{[0]\}$ is a real reduced hyperfield.
\end{theorem}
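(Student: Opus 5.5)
We verify condition (i) of Theorem \ref{rsbool-carac}: for every $[f]\in D/_{cm}q(D)$ we exhibit a unit $[u]$ with $[f]=[u][f]^2$ and $[u]\in[-1]+[f]$. Working in $D$ avoids the need for the $C^k$ non-zero divisor extension of Proposition \ref{nz-extension}. The reason is that every $f\in D$ equals a constant $c$ outside some compact $K_0$. Hence, modulo $\sim_{cm}$, $f$ is equivalent to the constant function $c$: taking $r=s=1$, we have $f\cdot1=c\cdot1$ on $\mathbb R^n\setminus K_0$. So $[f]=[c]$, and the whole argument reduces to constants. We split into the cases $c=0$ and $c\neq0$.

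\textbf{Case $c=0$.} Here $[f]=[0]$. Take $u=-1$, a global constant unit with $[u]^2=[1]$. Then $[u][f]^2=[0]=[f]$ trivially. For $[-1]\in[-1]+[0]$, use the zero-element axiom already established in Theorem \ref{mr-cm-der}, namely $[a]+[0]=\{[a]\}$.

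\textbf{Case $c\neq0$.} Take $u:=\operatorname{sgn}(c)\in\{1,-1\}$, viewed as a global constant function. Since $c^2$ and $|c|$ are strictly positive constants, they are squares of $C^\infty$ non-vanishing constants, so they lie in $q(D)$. Outside $K_0$ we have $f\cdot c^2=c^3$ and $u f^2\cdot |c| = \operatorname{sgn}(c)c^2|c|=c^3$. Hence $f\sim_{cm}uf^2$, giving $[f]=[u][f]^2$; also $[u]^2=[1]$, so $[u]$ is a unit.

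It remains to show $[u]\in[-1]+[f]$, i.e.\ to find $r,s,t\in q(D)$ with $ur\sim_{cm}-s+ft$. Outside $K_0$ this reduces to the real identity $\operatorname{sgn}(c)\,r=-s+ct$ with positive constants $r,s,t$.
\begin{itemize}
\item If $c>0$, take $t=2/c$, $s=1$, $r=1$, which gives $1=-1+2$.
\item If $c<0$, take $t=1$, $s=|c|$, $r=2$, which gives $-2=-|c|+c$.
\end{itemize}
Global positive constants are squares of nonzero constants, hence lie in $q(D)$, and equality on $\mathbb R^n\setminus K_0$ is exactly the definition of $\sim_{cm}$. This is the $C^k$ replacement for the $\rho$-construction in Theorem \ref{str-01}; here only constants are involved, so no smooth square roots are required.

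With (i) of Theorem \ref{rsbool-carac} verified, and with Theorem \ref{rrm-cm-der} providing the real semigroup structure via Theorem \ref{teo:RS+RRM}, $D/_{cm}q(D)$ is a Boolean real semigroup. By (iv), its units form a reduced special group, equivalently a real reduced hyperfield after adjoining $[0]$. The only step requiring care is the reduction $[f]=[c]$ and checking that the chosen constants genuinely lie in $q(D)$, i.e.\ that they are non-zero-divisors in $D$. This holds because they are nowhere vanishing and therefore invertible in $D$.
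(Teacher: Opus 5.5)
\textbf{A slip to fix.} In the subcase $c<0$ your certificate $t=1$, $s=|c|$, $r=2$ does not satisfy the identity you claim. You have $ur=-2$, but $-s+ct=-|c|+c=2c$, and these agree only when $c=-1$. The repair is immediate. One option is $r=2|c|$, since $-2|c|=-|c|+c$. Another is $r=2$, $s=1$, $t=1/|c|$. Alternatively, keep your choices and use the extra weights built into $\sim_{cm}$: $(-2)\cdot|c|=(2c)\cdot 1$ with $|c|,1\in q(D)$. With this corrected, your argument is correct.

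\textbf{Comparison with the paper.} Your route differs from the paper's and is simpler. The paper follows the template of the continuous case.
\begin{itemize}
\item For $c\neq0$ it takes $u=f$ and uses constant versions of the $\rho$-certificates: $r=(c^2+1-c)^2$, $t=(c^2+1+c)^2$, $s=4(c^2+1)c^2$, so that $cr=-s+ct$.
\item For $c=0$ it builds a smooth non-zero-divisor extension $u=f-g$. Here $g\ge0$, $Z(g)=\mathbb R^n\setminus\mbox{int}(Z(f))$, and $g\equiv1$ outside a larger compact set, so $u$ is eventually $-1$.
\end{itemize}
Your observation $[f]=[c]$ shows this machinery is unnecessary. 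The paper's $u$ in the case $c=0$ already satisfies $[u]=[-1]$, so your choice $u=-1$ gives the same class. More generally, every positive constant lies in $q(D)$, so $[c]=[\mbox{sgn}(c)]$. Hence every class is one of $[0],[1],[-1]$, and $D/_{cm}q(D)$ is just a copy of $\bm 3$. This makes the Boolean property and the description of the units transparent.

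Your choice $u=\mbox{sgn}(c)$ also avoids the paper's claim that $c\neq0$ forces $f\in\mbox{nzd}(D)$. That claim fails in general, because $f$ may vanish on a ball inside $K$, and a bump function supported there lies in $D$. The paper only needs $[f^2]=[1]$, which does hold because $f^2$ is eventually the positive constant $c^2$.

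The paper's approach buys uniformity with the continuous-function sections. Yours buys brevity and an explicit picture of what the quotient actually is.
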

\begin{proof}
    We use the characterization provided by Theorem~\ref{rsbool-carac}. It suffices to show that for every $[f] \in D/_{cm}q(D)$, there exists $[u] \in (D/_{cm}q(D))^\times$ such that $[f] = [u][f]^2$ and $[u] \in [-1] + [f]$.
    
    Let $f \in D$. Since $f$ is eventually constant, there exists a compact $K$ such that $f(x) = c \in \mathbb{R}$ for all $x \notin K$. We analyze two cases based on $c$.
    
    \medskip
    \noindent\textbf{Case 1:} $c \neq 0$. This implies $f \in \mbox{nzd}(D)$. 
    Choose $u = f$. Since $[f^2] = [1]$, $[u]$ is its own inverse, meaning $[u] \in (D/_{cm}q(D))^\times$ and $[u][f]^2 = [f]^3 = [f]$. 
    To show $[f] \in [-1] + [f]$, we need weights $r,s,t \in q(D)$ such that $f r = -s + f t$ outside some compact set. We define global strictly positive constant functions:
    $$r(x) := (c^2 + 1 - c)^2, \quad t(x) := (c^2 + 1 + c)^2, \quad s(x) := 4(c^2+1) c^2.$$
    Since $c \neq 0$, $r, s, t$ are strictly positive constants, meaning $r,s,t \in q(D)$. The algebraic identity $c \cdot r = -s + c \cdot t$ holds exactly for these constants, meaning $(f r)|_{\mathbb{R}^n \setminus K} = (-s + f t)|_{\mathbb{R}^n \setminus K}$. Thus, $[f] \in [-1] + [f]$.
    
    \medskip
    \noindent\textbf{Case 2:} $c = 0$. This means $f(x) = 0$ outside $K$, so $f \in \mbox{zd}(D)$. Since $f(x) = 0$ outside $K$, the interior of its zero-set, $\mbox{int}(Z(f))$, contains $\mathbb{R}^n \setminus K$. The complement $F = \mathbb{R}^n \setminus \mbox{int}(Z(f))$ is a closed subset of $K$, meaning $F$ is compact. 
    By standard smooth bump function theory, there exists a smooth non-negative function $g \ge 0$ such that $Z(g) = F$, and $g(x) = 1$ for all $x$ outside some larger compact set $K' \supseteq K$.
    We define $u \in D$ globally by $u(x) = f(x) - g(x)$. 
    By construction, 
    $$Z(u) = Z(f) \cap Z(g) = Z(f) \cap F = \partial Z(f)$$ 
    which has empty interior, so $u \in \mbox{nzd}(D)$. 
    Outside $K'$, $f(x) = 0$ and $g(x) = 1$, so $u(x) = -1$. Since $u$ is eventually $-1$, $u^2$ is eventually $1$. Using $1, 1 \in q(D)$, the identity $u^2 \cdot 1 = 1 \cdot 1$ holds outside $K'$, meaning $[u^2] = [1]$ and $[u] \in (D/_{cm}q(D))^\times$.
    To show $[f] = [u][f]^2$, note that outside $K'$, $f(x) = 0$ and $u(x) = -1$. The identity $f \cdot 1 = u f^2 \cdot 1$ ($0 = 0$) holds outside $K'$.
    To show $[u] \in [-1] + [f]$, we need $u r \sim_{cm} -s + f t$. Outside $K'$, $f = 0$ and $u = -1$. Then, choosing $r = 1, s = 1, t = 1 \in q(D)$, we get $[u] \in [-1] + [f]$.
    
    \medskip
    In all cases, the required unit $[u]$ exists, proving that $D/_{cm}q(D)$ is a Boolean real semigroup.
\end{proof}

The compact Marshall quotient $D/_{cm}q(D)$ for the ring $D$ of functions that are $C^k$ outside
some compact set captures the \emph{asymptotic $C^k$ behavior} of real‑valued functions on 
$\mathbb R^n$. Two functions become equivalent if they coincide outside a compact set up to 
multiplication by strictly positive non‑zero‑divisors that are squares in $D$. In particular, 
irregularities confined to a compact region (such as points where a function fails to be $C^k$,
or oscillatory singularities) are completely idealed out; only the smooth tail at infinity 
survives in the quotient. Because $D/_{cm}q(D)$ is a real reduced multiring, the abstract \L{}ojasiewicz-type inequalities of Section~\ref{cont-rs} apply verbatim to this quotient.

\subsection{The Closed Case}

This case proceeds essentially analogous to the compact case. 

For the closed case, we consider a family $\mathcal F$ of closed subsets such that $\mathcal F$ is closed under finite unions and $X\notin\mathcal F$. After that, we consider the subring
$D_{cl} \subseteq \mathcal{C}^k(\mathbb{R}^n, \mathbb{R})$ defined by:
    $$D_{cl} := \{f \in \mathcal{C}^k(\mathbb{R}^n, \mathbb{R}):\mbox{there exist } F\in \mathcal F, c \in \mathbb{R} \mbox{ such that } f(x) = c \mbox{ for all } x \in \mathbb{R}^n \setminus F\}.$$

Given $f,g\in D_{cl}$, define a relation $\sim_{clm}$ on $D_{cl}\times D_{cl}$, the \textbf{closed Marshall relation}, by the rule
$$f\sim_{clm}g\mbox{ if, and only if, there exist }F\in\mathcal F \mbox{ and }r,s\in q(D_{cl})\mbox{ such that }(fr)|_{\mathbb R^n\setminus F}=(gs)|_{\mathbb R^n\setminus F}.$$

\begin{proposition}\label{equiv-clm-der}
    The relation $\sim_{clm}$ is an equivalence relation.
\end{proposition}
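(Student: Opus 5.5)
We check reflexivity, symmetry and transitivity directly, following Proposition \ref{equiv-cm}. Compact sets are replaced by members of $\mathcal F$, and the weights are taken in $q(D_{cl})=D_{cl}^2\cap\mbox{nzd}(D_{cl})$.

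\emph{Reflexivity.} Take $r=s=1$, which is a square and a non-zero-divisor in $D_{cl}$, so $1\in q(D_{cl})$. We then need some $F\in\mathcal F$ to exhibit $(f\cdot 1)|_{\mathbb R^n\setminus F}=(f\cdot 1)|_{\mathbb R^n\setminus F}$. The relation only asks for existence of such an $F$, so we must check $\mathcal F\neq\emptyset$. Reading the hypothesis ``closed under finite unions'' as including the empty union, we get $\emptyset\in\mathcal F$. Alternatively, any member of $\mathcal F$ will do, and $\mathcal F$ is non-empty as soon as $D_{cl}$ contains a non-constant witness. This is the only point where a hypothesis on $\mathcal F$ beyond its definition enters, and I would state it explicitly.

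\emph{Symmetry.} Given $F$ and $r,s$ with $(fr)|_{\mathbb R^n\setminus F}=(gs)|_{\mathbb R^n\setminus F}$, simply swap the roles of $r$ and $s$.

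\emph{Transitivity.} Suppose $(fr_1)|_{\mathbb R^n\setminus F_1}=(gs_1)|_{\mathbb R^n\setminus F_1}$ and $(gr_2)|_{\mathbb R^n\setminus F_2}=(hs_2)|_{\mathbb R^n\setminus F_2}$. Put $F:=F_1\cup F_2\in\mathcal F$, using closure under finite unions. On $\mathbb R^n\setminus F=(\mathbb R^n\setminus F_1)\cap(\mathbb R^n\setminus F_2)$ both identities hold. Multiply the first by $r_2$ and the second by $s_1$; chaining them gives $(f\,r_1r_2)|_{\mathbb R^n\setminus F}=(h\,s_1s_2)|_{\mathbb R^n\setminus F}$.

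It remains to check that $r_1r_2$ and $s_1s_2$ lie in $q(D_{cl})$, that is, that $q(D_{cl})$ is multiplicatively closed. This holds in any commutative ring: a product of squares is a square, $(a^2)(b^2)=(ab)^2$. A product of non-zero-divisors is a non-zero-divisor, since $abx=0$ gives $bx=0$ and hence $x=0$. One also needs $D_{cl}$ itself to be a ring. If $f=c$ off $F$ and $g=d$ off $G$, then $f\pm g$ and $fg$ are constant off $F\cup G\in\mathcal F$, and they are $C^k$; so $D_{cl}$ is a subring, again by closure under finite unions.

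I expect no genuine obstacle. The only delicate points are the non-emptiness of $\mathcal F$ in the reflexivity step and confirming that the ring-theoretic closure of $q(\cdot)$ is being applied inside $D_{cl}$ rather than in $\mathcal C^k(\mathbb R^n,\mathbb R)$. Both are settled by the observations above, so the proof is ``similar to Proposition \ref{equiv-cm}''.
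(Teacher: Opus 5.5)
Your proof is correct and follows the paper's route: the paper simply defers to Proposition~\ref{equiv-cm}, using $r=s=1$ for reflexivity, swapping the weights for symmetry, and $F_1\cup F_2$ together with multiplicative closure of $q(D_{cl})$ for transitivity. One small refinement: $\mathcal F\neq\emptyset$ needs no extra convention and no non-constant element, because any $f\in D_{cl}$ comes with a witness $F\in\mathcal F$ by the definition of $D_{cl}$ (and that same $F$ shows $1\in D_{cl}$), so the point you flag is automatic rather than an additional hypothesis.
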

\begin{proof}
    Similar to Proposition \ref{equiv-cm-der}
\end{proof}

Denote the set of equivalence classes of $D_{cl}$ under the relation $\sim_{clm}$ by $D_{cl}/_{clm}q(D_{cl})$. Elements in $D_{cl}/_{clm}q(D_{cl})$ will be denoted by $[f]\in D_{cl}/_{clm}q(D_{cl})$, for $f\in D$. In symbols:
$$D_{cl}/_{clm}q(D_{cl}):=D_{cl}/\sim_{clm}=\{[a]:a\in D_{cl}\}.$$
We call $D_{cl}/_{clm}q(D_{cl})$ by \textbf{the closed Marshall quotient} of $D_{cl}$ by $Q(D_{cl})$. Similarly we prove that $D_{cl}/_{clm}q(D_{cl})$ has a structure of Boolean real semigroup. We sketch it in the results below.

With a similar proof to the one given in Proposition \ref{cong-cm-der}, the equivalence relation $\sim_{clm}$ induces a well-defined hyperoperation. We safely endow $D_{cl}/_{clm}q(D_{cl})$ with the generalized Marshall quotient structure by defining:
\begin{align*}
    [f] \cdot [g] &:= [fg] \\
    [f] + [g] &:= \{[h] : hr \sim_{clm} fs + gt \mbox{ for some } r,s,t \in q(D_{cl})\}.
\end{align*}

\begin{theorem}\label{mr-clm-der}
    The structure $(D_{cl}/_{clm}q(D_{cl}),+,\cdot,[0],[1])$ is a multiring.
\end{theorem}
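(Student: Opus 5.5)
The plan is to mirror the proof of Theorem \ref{mr-cm} line by line, replacing compact sets by members of $\mathcal F$ and $q(A)$ by $q(D_{cl})$. Only two structural facts were used there. First, the exceptional family is closed under finite unions; this holds for $\mathcal F$ by hypothesis. Second, $q(D_{cl})=D_{cl}^2\cap\mbox{nzd}(D_{cl})$ contains $1$ and is closed under multiplication. I would first check that $D_{cl}$ is a subring of $\mathcal C^k(\mathbb R^n,\mathbb R)$: if $f=c$ off $F_1$ and $g=d$ off $F_2$, then $f\pm g$ and $fg$ are constant off $F_1\cup F_2\in\mathcal F$. Closure of $q(D_{cl})$ under products then follows because products of squares are squares and products of non-zero-divisors are non-zero-divisors in any commutative ring. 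Together with Proposition \ref{equiv-clm-der} and the well-definedness of the operations (the analogue of Proposition \ref{cong-cm-der}, proved exactly as Proposition \ref{cong-cm} by taking unions $F_h\cup F_f\cup F_g\in\mathcal F$), this gives the setting.

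Next I would verify the axioms in the order used in Theorem \ref{mr-cm}. Since $(D_{cl},\cdot,1)$ is a commutative monoid, the multiplicative monoid structure, $[f][0]=[0]$, and commutativity of $+$ pass to the quotient at once.

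\textbf{Zero element.} If $hr\sim_{clm}fs$, then $h(r\alpha)=f(s\beta)$ off some $F\in\mathcal F$, so $[h]=[f]$. Conversely, the weights $1,1,1$ show $[f]\in[f]+[0]$.

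\textbf{Reversibility.} The identity $hr=fs+gt$ off $F$ can be rewritten as $fs=hr+(-g)t$ off $F$.

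\textbf{Associativity.} Given $ur_1=fs_1+gt_1$ off $F_1$ and $hr_2=us_2+kt_2$ off $F_2$, I multiply the second identity by $r_1$ and substitute the first on the complement of $F_1\cup F_2$. This gives $h(r_1r_2)=f(s_1s_2)+v$ with $v:=g(t_1s_2)+k(t_2r_1)\in D_{cl}$, and $[v]\in[g]+[k]$ holds globally.

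\textbf{Distributivity.} Multiplying $hr=fs+gt$ (off $F$) by $k\in D_{cl}$ gives the required relation.

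I do not expect a genuine obstacle, since nothing analytic (square roots, bump functions) enters at the multiring level. The only point to watch is that every auxiliary function produced, such as $v$ or the products of weights, stays inside $D_{cl}$ and inside $q(D_{cl})$ respectively. This is guaranteed by the ring closure of $D_{cl}$ and the multiplicative closure of $q(D_{cl})$ established at the start.
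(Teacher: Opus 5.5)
Your proposal is correct and takes the same route as the paper, which proves this theorem by deferring to Theorem~\ref{mr-cm-der} and hence to Theorem~\ref{mr-cm}; that argument verifies the multiring axioms using only that the exceptional family is closed under finite unions and that $q$ contains $1$ and is multiplicatively closed. Your explicit checks that $D_{cl}$ is a subring and that $q(D_{cl})$ is closed under products supply details the paper leaves implicit.
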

\begin{proof}
    Similar to Theorem \ref{mr-cm-der}
\end{proof}

\begin{theorem}\label{rrm-clm-der}
    The multiring $(D_{cl}/_{clm}q(D_{cl}),+,\cdot,[0],[1])$ is a real reduced multiring.
\end{theorem}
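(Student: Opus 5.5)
We follow the proof of Theorem \ref{rrm-cm-der} almost verbatim, replacing ``outside a compact set'' by ``outside some $F\in\mathcal F$''. The first structural fact we use is that $\mathcal F$ is closed under finite unions. Hence, given finitely many elements of $D_{cl}$ together with the finitely many sets in $\mathcal F$ that witness the relations at hand, we can pass to a single $F\in\mathcal F$ such that, on $\mathbb R^n\setminus F$, every function involved is constant and every witnessing identity holds. The second fact is that $X=\mathbb R^n\notin\mathcal F$, so $\mathbb R^n\setminus F$ is a non-empty open set.

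The first step is to establish $[1]\ne[0]$ and semi-reality. If $r\cdot 1=0\cdot s$ outside $F$ with $r\in q(D_{cl})$, then $Z(r)$ contains the non-empty open set $\mathbb R^n\setminus F$. This contradicts $r\in\mbox{nzd}(D_{cl})$. For $D_{cl}$ we must check that a non-zero divisor of $D_{cl}$ cannot vanish on a non-empty open set $V$. For this we take a smooth bump $\varphi$ supported in a closed ball $B\subseteq V$. We need $\varphi\in D_{cl}$, and this requires $B$ to lie inside some member of $\mathcal F$. That is not automatic.

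So the plan is to restrict the contradiction to $\mathbb R^n\setminus F$ directly. We argue with constants: $r$ equals some constant $C_r$ outside some $F'\in\mathcal F$. If $C_r=0$, then $Z(r)\supseteq\mathbb R^n\setminus F'$, and this set has non-empty interior. We then exhibit a non-zero element of $D_{cl}$ killed by $r$. A candidate is a $C^k$ function $g$ with $Z(g)=F'$ closure-wise, i.e. $g$ nonzero only off $F'$; but such a $g$ need not be eventually constant off a member of $\mathcal F$. I expect this to be the \emph{main obstacle}. Without a mild hypothesis (e.g. $\mathcal F$ contains a closed ball, or is the compact family), the identification ``$C_r>0$ for $r\in q(D_{cl})$'' may fail. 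I would first try to prove $C_r>0$ by noting that $r\in D_{cl}^2$ forces $C_r\ge0$. I would then handle $C_r=0$ by showing that it forces $r$ to be a zero divisor, via $g:=\psi$ with $Z(\psi)\supseteq\mathbb R^n\setminus\mathrm{int}(F')$. Failing that, I would adopt the implicit assumption that the witnesses' constants are positive, as the compact case does. Semi-reality then reduces to $-C_r=\sum C_{f_i}^2C_{s_i}$ with positive $C_r$, which is impossible.

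With positivity of tail constants of $q(D_{cl})$ in hand, the remaining axioms are pure arithmetic of constants, exactly as in Theorem \ref{rrm-cm-der}. For $[f]^3=[f]$, take $r=c^2$, $s=1$ when $c\ne0$, and $r=s=1$ when $c=0$. For $[h]\in[f]+[fg^2]$, set $C_v=C_s+C_g^2C_t>0$ and compare $hC_r$ with $fC_v$ off the common $F$. For $[h]\in[f^2]+[g^2]$, split according to whether $C_f^2+C_g^2$ vanishes, and use the positive constants $C_w,C_h$ (or $1,1$) as weights. Positive constants are squares of positive constants, so they lie in $q(D_{cl})$. All identities hold on $\mathbb R^n\setminus F$, which gives the claimed relations $\sim_{clm}$.
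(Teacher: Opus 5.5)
\textbf{There is a genuine gap, and it cannot be closed.} It is the step you flag yourself: that every $r\in q(D_{cl})$ has a strictly positive constant value off the members of $\mathcal F$ (equivalently, $[1]\neq[0]$). Neither of your routes works, and ``adopting the implicit assumption'' adds a hypothesis the statement does not have. Under the stated conditions on $\mathcal F$ the theorem is in fact false.

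\textbf{Counterexample.} Take $\mathcal F=\{H\}$ with $H=\{x\in\mathbb R^n : x_1\le 0\}$. This family consists of closed sets, is closed under finite unions, and $\mathbb R^n\notin\mathcal F$. So $D_{cl}$ is the ring of $C^k$ functions that are constant on the open half-space $\{x_1>0\}$.
\begin{enumerate}
\item Let $\eta(x)=e^{1/x_1}$ for $x_1<0$ and $\eta(x)=0$ for $x_1\ge 0$. Then $\eta$ is $C^\infty$ and vanishes off $H$, so $\eta\in D_{cl}$ and $r:=\eta^2\in D_{cl}^2$.
\item Suppose $\varphi\in D_{cl}$ satisfies $r\varphi=0$. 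Then $\varphi=0$ on $\{x_1<0\}$, hence on $\{x_1\le 0\}$ by continuity. The constant value of $\varphi$ on $\{x_1>0\}$ must then be $0$, by continuity across $\{x_1=0\}$. So $\varphi=0$, and $r\in q(D_{cl})$.
\item But $(1\cdot r)|_{\mathbb R^n\setminus H}=0=(0\cdot 1)|_{\mathbb R^n\setminus H}$, so $[1]=[0]$. Indeed every class equals $[0]$, and the quotient is not real reduced.
\end{enumerate}
In particular, the nonzero $g\in D_{cl}$ annihilated by $r$ that you were looking for does not exist here, and this $r$ has tail constant $0$.

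\textbf{Relation to the paper.} The paper's own proof (``Similar to Theorem~\ref{rrm-cm-der}'') has the same defect. In the compact case, positivity of $C_r$ rests on this fact: a function vanishing on a nonempty open set $V$ is annihilated by a bump supported in a ball inside $V$, and that bump lies in $D$ because it is compactly supported. For an arbitrary $\mathcal F$ such a bump need not lie in $D_{cl}$, exactly as you observed. What is missing is a hypothesis, not an argument.

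\textbf{A repair.} A sufficient extra hypothesis is: for every $F\in\mathcal F$ there exists $F'\in\mathcal F$ with $\mathrm{int}(F')\not\subseteq F$. Suppose then that some $r\in q(D_{cl})$ vanished off some $F\in\mathcal F$. A bump supported in a closed ball inside $\mathrm{int}(F')\setminus F$ is zero off $F'$, hence lies in $D_{cl}$, and is killed by $r$, which is a contradiction. So every tail constant of an element of $q(D_{cl})$ is a nonzero square, hence positive (equivalently, $[1]\neq[0]$). Under this hypothesis the rest of your proposal goes through as written: the constant arithmetic for semi-reality, for $[f]^3=[f]$, and for the two sum axioms. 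It then coincides with the paper's compact-case argument.
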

\begin{proof}
    Similar to Theorem \ref{rrm-cm-der}.
\end{proof}

\begin{theorem}\label{boolean-clm-der}
    The real reduced multiring $(D_{cl}/_{clm}q(D_{cl}),+,\cdot,[0],[1])$ (when considered as a real semigroup) is a Boolean real semigroup. Moreover $(D_{cl}/_{clm}q(D_{cl}))^\times\cup\{[0]\}$ is a real reduced hyperfield (reduced special group).
\end{theorem}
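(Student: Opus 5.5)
We follow the proof of Theorem \ref{boolean-cm-der}, replacing compact sets by members of $\mathcal F$. By Theorem \ref{rrm-clm-der}, $D_{cl}/_{clm}q(D_{cl})$ is already a real reduced multiring, so by Theorem \ref{teo:RS+RRM} it is a real semigroup. By Theorem \ref{rsbool-carac}, it suffices to check condition (i). That is, for every $f\in D_{cl}$ we must produce $[u]\in(D_{cl}/_{clm}q(D_{cl}))^\times$ with $[f]=[u][f]^2$ and $[u]\in[-1]+[f]$. The last assertion, that the units together with $[0]$ form a real reduced hyperfield (a reduced special group), then follows from condition (iv) of Theorem \ref{rsbool-carac}.

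Fix $f\in D_{cl}$, so that $f\equiv c$ on $\mathbb R^n\setminus F$ for some $F\in\mathcal F$. We first note that every member of $\mathcal F$ has nonempty open complement: it is a closed set other than $X=\mathbb R^n$. Hence a function in $q(D_{cl})$ cannot vanish identically off $F$.

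\textbf{Case $c\neq0$.} Take $u=f$. Since $f\cdot c^2=f^3\cdot 1$ off $F$, and positive constants lie in $q(D_{cl})$, we get $[f^2]=[1]$. Thus $[u]$ is its own inverse and $[u][f]^2=[f]$. For $[f]\in[-1]+[f]$, we reuse the constants $r=(c^2+1-c)^2$, $t=(c^2+1+c)^2$ and $s=4(c^2+1)c^2$. They satisfy $cr=-s+ct$, so $fr=-s+ft$ holds off $F$.

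\textbf{Case $c=0$.} Then $\mbox{int}(Z(f))\supseteq\mathbb R^n\setminus F$, and the closed set $F'=\mathbb R^n\setminus\mbox{int}(Z(f))$ is contained in $F$. By the smooth zero-set result quoted in Proposition \ref{nz-extension}, there is a smooth $g\ge0$ with $Z(g)=F'$. The step needing care is arranging $u:=f-g\in D_{cl}$, i.e.\ making $g$ eventually constant off some member of $\mathcal F$. In the compact case a bump function did this automatically.

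My first attempt avoids this obstacle. I take instead $g=\varphi\circ\tilde g$, where $\tilde g\ge0$ is smooth with $Z(\tilde g)=F'$ and $\varphi:[0,\infty)\to[0,1]$ is smooth, with $\varphi^{-1}(0)=\{0\}$ and $\varphi\equiv1$ on $[1,\infty)$. This keeps $Z(g)=F'$. We need $g\equiv1$ outside a set in $\mathcal F$, and this requires $\{\tilde g<1\}$ to lie in such a set. If that fails for a general ideal $\mathcal F$, the fallback is to observe that only the behaviour off $F$ matters. Indeed, $f\equiv0$ there, so any $u$ that is eventually $-1$ off $F$ and nonvanishing on a dense set works. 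One can take $u=f-g$ with $g\equiv1$ on $\mathbb R^n\setminus F$, chosen via a smooth Urysohn-type function that is positive exactly off $F'$; the needed $C^k$ regularity comes from the Euclidean smooth zero-set theorem. I expect this choice of $g$ to be the main technical point.

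Once $u$ is fixed, $Z(u)=\partial Z(f)$ as in Proposition \ref{nz-extension}, so $u\in\mbox{nzd}(D_{cl})$, and $u\equiv-1$ off $F$. Then $u^2\cdot1=1\cdot1$ off $F$ gives $[u]\in(D_{cl}/_{clm}q(D_{cl}))^\times$. Next, $f=uf^2=0$ off $F$ gives $[f]=[u][f]^2$. Finally, $u\cdot1=-1+f\cdot1$ off $F$, with weights $r=s=t=1$, gives $[u]\in[-1]+[f]$. This completes condition (i) in both cases.
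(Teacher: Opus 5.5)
Your case $c\neq0$ is fine up to one slip. The identity $f\cdot c^2=f^3\cdot1$ off $F$ gives $[f]=[f]^3$, not $[f^2]=[1]$; use $f^2\cdot1=1\cdot c^2$ off $F$ instead.

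The genuine gap is the case $c=0$. There you never actually produce $u$, and the constructions you sketch cannot work for an arbitrary $\mathcal F$.

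\emph{First attempt.} It needs $\{\tilde g<1\}$ to lie in a member of $\mathcal F$, which $\mathcal F$ need not provide.

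\emph{Fallback.} It asks for a continuous $g$ with $g\equiv1$ on $\mathbb R^n\setminus F$ and $Z(g)=F'$. This is impossible as soon as $F'$ meets $\partial F$, since $g\equiv1$ on the closure of $\mathbb R^n\setminus F$. For example, take $n=1$, $F=[0,\infty)$, and $f(x)=e^{-1/x}$ for $x>0$, $f(x)=0$ for $x\le0$. Then $F'=[0,\infty)\ni0$, while continuity forces $g(0)=1$.

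\emph{Enlarging $F$.} Passing to a larger member of $\mathcal F$ containing a neighbourhood of $F'$ is what the enlargement of $K$ to $K'$ silently achieves in the compact case. It is not always available. Let $\mathcal F$ consist of the sets $E\cup K$ with $E\subseteq[0,\infty)$ closed and $K\subseteq(-\infty,-1]$ compact. Then every element of $D_{cl}$ is constant on $(-1,0)$. So any $u\in D_{cl}$ equal to $-1$ off a member of $\mathcal F$ has $u(0)=-1$. On the other hand, $u=f-g$ with $0\in Z(g)$ has $u(0)=0$. Hence for this admissible $\mathcal F$ no $u$ of the kind you are looking for exists.

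The missing observation is that no construction is needed at all. Since $f\equiv0$ off $F$, the identity $f\cdot1=0\cdot1$ on $\mathbb R^n\setminus F$ shows $[f]=[0]$. Then $u\equiv-1$ does the job:
\begin{enumerate}
\item $[-1]$ is a unit, since $(-1)^2=1$;
\item $[f]=[0]=[-1][0]^2$;
\item $(-1)\cdot1=(-1)\cdot1+f\cdot1$ on $\mathbb R^n\setminus F$, with weights $r=s=t=1\in q(D_{cl})$, gives $[-1]\in[-1]+[f]$.
\end{enumerate}
This is just the fact that $-1+0=\{-1\}$ in any multiring. So condition (i) of Theorem \ref{rsbool-carac} for the zero class is always witnessed by $-1$. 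Invertibility of $[u]$ only needs $u^2\equiv1$ off a member of $\mathcal F$, so whether $u$ is a non-zero-divisor is irrelevant. Your own remark that any $u$ equal to $-1$ off $F$ and nonvanishing on a dense set would work already points to this: the constant $-1$ is such a function.

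Finally, your aside that a function in $q(D_{cl})$ cannot vanish identically off $F$ is not justified for arbitrary $\mathcal F$. Bump functions need not lie in $D_{cl}$, so non-zero-divisors of $D_{cl}$ are not characterized by zero sets with empty interior. You never use the aside, so it can simply be dropped.
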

\begin{proof}
    Similar to Theorem \ref{boolean-cm-der}
\end{proof}

Because $D_{cl}/_{clm}q(D_{cl})$ is a real reduced multiring, the abstract \L{}ojasiewicz-type inequalities of Section~\ref{cont-rs} apply verbatim to this quotient.

\subsection{Comments on the general case}

The reader may wonder why we did not simply take $D = \mathcal C^k(\mathbb R^n,\mathbb R)$
throughout Section~\ref{der-rs}. The reason is that the resulting generalized Marshall quotients
fail to be real reduced in general, for subtle reasons that we now explain.
This discussion also illuminates why restricting the functions to be eventually constant is
essential in the compact and closed cases, and why the
``bounded'' variant collapses to the compact one in Euclidean spaces.

\begin{enumerate}
    \item \textbf{The global $C^k$ case.}
For $D = \mathcal C^k(\mathbb R^n,\mathbb R)$, the classical Marshall quotient
$D/_m q(D)$ is not real reduced.
The obstruction is that a non-negative $C^k$ function need not be a square in $D$,
so $q(D) = D^2 \cap \mbox{nzd}(D)$ is strictly smaller than the set of
non-negative non-zero-divisors.
One might hope that the generalized quotients $D/_{cm}q(D)$, $D/_{clm}q(D)$ and
$D/_{bm}q(D)$ could deal with this difficulty because their hyperoperations only
require equalities to hold outside a compact/closed/bounded set.
However, the proofs that these quotients are real reduced heavily rely on algebraic
certificates (square weights) that must exist globally in $q(D)$. For $D = \mathcal C^k(\mathbb R^n,\mathbb R)$,
this breaks down: the function $h(x) = \sin^2 x$ on $\mathbb R$ is $C^\infty$, $h \ge 0$,
$\mbox{int}(Z(h)) = \emptyset$, yet the pointwise square root $|\sin x|$
is not $C^1$. Thus the central tool for constructing the algebraic certificates collapses,
and the multirings are not real reduced.

    \item \textbf{The ring of functions that are $C^k$ outside a compact set.}
A natural relaxation is to take
$$D = \{f : \mathbb R^n \to \mathbb R \mid \mbox{there exists a compact } K 
\mbox{ such that } f|_{\mathbb R^n\setminus K} \in \mathcal C^k(\mathbb R^n\setminus K,\mathbb R)\}.$$
Here, the problem persists if the zero-set $Z(h)$ is unbounded. The unbounded part of $Z(h)$ 
can be a closed set with empty interior, and the square-root function will not be $C^k$ there. 
For instance, $h(x) = \sin^2(1/x) e^{-1/x^2}$ (extended by $0$ at $0$) is
$C^\infty$, non-negative, with empty interior zero-set, but admits no $C^k$
square root on any neighborhood of $0$ whose square is $h$.

    \item \textbf{ Enforcing eventually constant functions.}
    The solution adopted is to restrict $D$ to those $C^k$ functions that are \emph{eventually constant} outside the exceptional sets (compact, closed, or bounded). For such functions, one can evaluate them purely as real constants outside the domain of restriction. This completely bypasses the extraction of square roots of generic smooth functions. Because strictly positive real constants are trivially perfect squares globally in $\mathcal{C}^k$, the algebraic identities of Marshall's theory hold perfectly on these asymptotic weights, mollifying the analytical obstruction and unlocking the real reduced structure.

    \item \textbf{The bounded quotient.} In $\mathbb R^n$ every bounded set has compact closure, so a function that is eventually constant outside a bounded set automatically satisfies the compact case conditions. Consequently the bounded Marshall quotient coincides with the compact one. This is a Euclidean phenomenon: in infinite-dimensional Banach spaces bounded $\neq$ compact, and the two quotients would genuinely differ. We leave the exploration of that setting for future work.
\end{enumerate}

\section{Final Remarks}

In this paper, we introduced generalized Marshall quotients $A/_{cm}q(A)$, $A/_{clm}q(A)$, and $A/_{bm}q(A)$ for rings of continuous functions on suitable topological spaces. By demonstrating that they are real reduced multirings and Boolean real semigroups, we provided explicitly calculated examples linking topological function spaces to abstract real spectra.

These results provide a large new class of explicitly calculated examples of Boolean real semigroups and reduced special groups arising naturally from analysis and topology. In particular, they shed light on the open problem mentioned in the introduction: "\emph{when does the group of units of a real semigroup form a reduced special group?}" by providing new computable examples of real semigroups where the answer for this question is "yes".

The extension of the theory to differentiable functions required an analysis of the square-root obstruction, i.e., the existence of non-negative smooth functions whose square root is not differentiable. To deal with this, we restricted our domain to the ring of \emph{eventually constant functions}. By doing that, the generalized Marshall quotients evaluate the equivalence relations precisely on the regions where the functions behave as purely real constants. This algebraic strategy bypasses the extraction of complex smooth roots, recovering the full strength of the real reduced and Boolean properties while moving the irregular differential behavior into the exceptional compact or closed sets.

The applications presented in this paper illustrate how the abstract hyperalgebraic identities translate into topological and differential statements. The \L{}ojasiewicz-type inequalities show that the Boolean real semigroup structure acts as a bridge replacing derivative bounds by algebraic manipulation of sums of squares.

Several directions remain open for future investigation. First, the infinite-dimensional metric case: as highlighted by the bounded quotient, escaping compact sets and escaping bounded sets measure entirely distinct asymptotic phenomena in general Banach spaces. 

Second, the structural limits of this theory in the realm of real analytic functions ($\mathcal{C}^\omega$). The generalized asymptotic quotient topologically implodes in $\mathcal{C}^\omega$ due to the Identity Principle (an eventually constant analytic function must be globally constant). Algebraically, the classical Marshall quotient is obstructed by Hilbert's 17th Problem: non-negative analytic functions are not necessarily squares, requiring sums of squares or rational functions. Adapting the multiplicative set to capture analytic spectra remains an open problem.

Third, dropping the Boolean requirement opens the door to the study of local rings in differential geometry. If one considers $A = \mathcal{C}_0^k$ as the ring of germs of $\mathcal{C}^k$ functions at the origin, the classical Marshall quotient $A/_m q(A)$ forms a real reduced multiring. However, it is not a Boolean one: the condition $[x] = [u][x^2]$ lacks a solution in the quotient, as the required unit $u \sim 1/x$ diverges at the origin and does not belong to $\mathcal{C}_0^k$.

Finally, the functorial aspects of the generalized Marshall quotients (e.g., the behavior under pull-backs by proper maps) deserve a systematic treatment. We hope that the ideas presented here will stimulate further interaction between real algebra, hyperfields, and the geometry of function spaces.

\vspace{0.5cm}

\noindent {\bf Acknowledgments:}
The author was supported by the S\~ao Paulo Research Foundation (FAPESP, Brazil), thematic project {\em Rationality, logic and probability -- RatioLog}, grant 2020/16353-3 and by a post-doctoral grant from FAPESP, grant 2024/18577-7.

\bibliographystyle{plain}
\bibliography{one-for-all}
\end{document}